   \def\MR#1{}
\long\def\@savemarbox#1#2{\global\setbox#1\vtop{\hsize\marginparwidth 
  \@parboxrestore\tiny\raggedright #2}}
\newtheorem{theorem}{Theorem}[section]
\newtheorem{proposition}[theorem]{Proposition}
\newtheorem{lemma}[theorem]{Lemma}
\newtheorem{corollary}[theorem]{Corollary}
\newtheorem*{namedtheorem}{\theoremname}
\newcommand{\theoremname}{testing}
\newenvironment{named}[1]{\renewcommand{\theoremname}{#1}\begin{namedtheorem}}{\end{namedtheorem}}
\theoremstyle{definition}
\newtheorem{definition}[theorem]{Definition}
\newtheorem{example}[theorem]{Example}
\newtheorem{conjecture}[theorem]{Conjecture}
\newtheorem{remark}[theorem]{Remark}
\newtheorem{question}[theorem]{Question}
\newcommand{\refthm}[1]{Theorem~\ref{Thm:#1}}
\newcommand{\reflem}[1]{Lemma~\ref{Lem:#1}}
\newcommand{\refprop}[1]{Proposition~\ref{Prop:#1}}
\newcommand{\refcor}[1]{Corollary~\ref{Cor:#1}}
\newcommand{\refconj}[1]{Conjecture~\ref{Conj:#1}}
\newcommand{\refdef}[1]{Definition~\ref{Def:#1}}
\newcommand{\refsec}[1]{Section~\ref{Sec:#1}}
\newcommand{\reffig}[1]{Figure~\ref{Fig:#1}}
\newcommand{\CC}{\mathbb{C}}  
\newcommand{\RR}{\mathbb{R}}  
\newcommand{\bdy}{\partial}   
\newcommand{\from}{\colon\thinspace} 
\begin{document}

\title{Satellites and Lorenz knots}
\author{Thiago de Paiva}
\author{Jessica S. Purcell}

\begin{abstract}
We construct infinitely many families of Lorenz knots that are satellites but not cables, giving counterexamples to a conjecture attributed to Morton. We amend the conjecture to state that Lorenz knots that are satellite have companion a Lorenz knot, and pattern equivalent to a Lorenz knot. We show this amended conjecture holds very broadly: it is true for all Lorenz knots obtained by high Dehn  filling on a parent link, and other examples. 
\end{abstract}

\maketitle

\section{Introduction}

Lorenz knots and links were initially described in applied dynamical systems: They are the closed periodic orbits of a system of ordinary differential equations studied by meteorologist E.~N.~Lorenz to predict weather patterns~\cite{Lorenz}. These were given a geometric framework by Guckenheimer and Williams~\cite{GuckenheimerWilliams}, proved later by Tucker~\cite{Tucker}, who proved such links are equivalent to links on an embedded branched surface in $\RR^3$ called the Lorenz template.
Birman and Williams used the template to initiate the systematic study of such links from a more knot theoretic point of view~\cite{BirmanWilliams}. Swinging back to dynamics, Ghys showed that Lorenz knots coincide with periodic orbits in the geodesic flow on the modular surface~\cite{Ghys}. 

Such knots appear again in hyperbolic geometry. Birman and Kofman discovered that of the hyperbolic knots that appear in the SnapPy census, triangulated by at most seven tetrahedra, over half of these knots are Lorenz knots~\cite{BirmanKofman}. This is in spite of the fact that Lorenz knots are quite scarce in tables of knots indexed by crossing, observed by Ghys and Leys~\cite{GhysLeys}. In particular, this means that such knots are among those hyperbolic knots of smallest volume. Birman and Kofman ask, why are so many simple hyperbolic knots equivalent to Lorenz knots?

A related question also needs to be answered, namely, when is a Lorenz knot hyperbolic? Thurston showed that any knot in the 3-sphere has complement that is either hyperbolic, a torus knot, or a satellite knot \cite{Thurston:Bulletin}. Which of these are Lorenz knots?

Birman and Williams showed that every torus knot is a Lorenz knot \cite[Theorem~6.1]{BirmanWilliams}. They showed that the satellites obtained as certain cables of Lorenz knots are also Lorenz knots~\cite[Theorem~6.2]{BirmanWilliams}. This was extended by El-Rifai, who showed that the only way in which a Lorenz knot can be presented as the satellite of a Lorenz knot is if it is a cable on a Lorenz knot, possibly with additional twisting~\cite[Theorem~3.9]{ElRifai:Satellite}. In the survey article~\cite{Dehornoy:Survey}, Dehornoy presents Conjecture~5.2 attributed to Morton: That every Lorenz knot that is a satellite is a cable on a Lorenz knot.

In this paper, we show that as stated, this conjecture is false. We construct infinitely many examples of Lorenz knots that are satellites but are not cables on Lorenz knots. 

\begin{theorem}\label{Thm:SatelliteLorenzIntro}
There exist infinitely many Lorenz knots that are satellites for which there are exactly two components of the JSJ decomposition, and both components are hyperbolic. Thus by the uniqueness of the JSJ decomposition of a knot complement, the knots cannot be cables on Lorenz knots. 
\end{theorem}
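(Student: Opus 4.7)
The plan is to construct the examples via high Dehn filling on a carefully chosen parent Lorenz link. I would first exhibit an explicit multi-component Lorenz link $L = L_1 \cup L_2$ on the Lorenz template whose exterior is hyperbolic and which already contains an essential, non-boundary-parallel torus $T$ separating it into two hyperbolic pieces. The existence of such an $L$ should be demonstrable by writing a Lorenz braid word whose closure splits geometrically in the expected way, checking hyperbolicity and the torus decomposition either by a direct argument or with SnapPy. The component $L_2$ will play the role of a ``twisting'' component, chosen so that it is unknotted and bounds a disk meeting $L_1$ transversely in a controlled way.

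Next, I would perform the family of Dehn fillings along $L_2$ with slopes $\alpha_n = 1/n$ (twisting $n$ times about $L_2$), producing a sequence of knots $K_n$ in $S^3$. The key step, and the one I expect to be the hardest, is to show that each $K_n$ is itself a Lorenz knot. The strategy is to realize the twisting operation on the template directly: twisting on $L_2$ translates into inserting a fixed local braid word into the Lorenz braid representing $L_1$, and one must verify that the resulting braid still closes to a Lorenz link on the template. This is a braid-theoretic argument of the type pioneered by Birman--Williams and continued by El-Rifai; the delicate point is that Lorenz braids are quite rigid, and one must argue that the specific twisting of $L_1$ induced by the chosen $L_2$ preserves Lorenz-ness.

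Once the $K_n$ are known to be Lorenz, Thurston's hyperbolic Dehn surgery theorem completes the geometric side. Applied separately to each of the two hyperbolic pieces of $S^3 \setminus L$ adjacent to $\partial N(L_2)$, it shows that for all sufficiently large $n$, each filled piece remains hyperbolic, the torus $T$ remains essential and non-boundary-parallel in $S^3 \setminus K_n$, and the induced decomposition is the JSJ decomposition of the knot exterior. Distinct large $n$ produce non-isometric fillings, hence non-isotopic knots, yielding infinitely many examples.

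Finally, the obstruction to being a cable comes for free from the JSJ structure. Any cable knot has an essential JSJ torus bounding a Seifert fibered cable space, which is not hyperbolic. Since both JSJ pieces of $S^3 \setminus K_n$ are hyperbolic and the JSJ decomposition is unique, no $K_n$ can be a cable on any knot, let alone on a Lorenz one. Thus the main engineering task is the Lorenz realization of the twisted knots $K_n$ in step two; the hyperbolic geometry and the non-cable conclusion are then standard consequences.
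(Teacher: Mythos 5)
Your plan has the right overall architecture (twist along an unknotted component, keep an essential torus, conclude non-cable from a JSJ with two hyperbolic pieces), but it leaves the two load-bearing steps unproved, and one of them is exactly the hard point. First, the parent link: as written, the claim that $L=L_1\cup L_2$ has exterior that ``is hyperbolic and already contains an essential, non-boundary-parallel torus'' is self-contradictory (a hyperbolic exterior is atoroidal); what you mean is a toroidal exterior with hyperbolic pieces, but you never exhibit such an $L$ nor prove it has that decomposition---you defer to ``a direct argument or SnapPy.'' Second, and more seriously, the step you yourself flag as hardest---that every twisted knot $K_n$ is again a Lorenz knot---is left as an unproven hope about rigidity of Lorenz braids. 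Without it there is no theorem, since the statement is about Lorenz knots. Note also that in your setup the torus separates, so $\bdy N(L_2)$ lies in only one complementary piece; Thurston's hyperbolic Dehn surgery can only be applied to that piece, not ``to each of the two pieces adjacent to $\bdy N(L_2)$,'' and you would additionally need the other piece to be hyperbolic already in the parent.

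The paper closes both gaps by working inside the T-link framework rather than with abstract Lorenz braids. The knots are defined as T-links, so they are Lorenz by Birman--Kofman with no braid-rigidity argument needed: the parent link is $\tau_r\ast T(p,q)$ together with unknots $J_{0,s_iq}$ encircling the leftmost strands, and $1/t_i$ filling on such an unknot inserts a full twist $(s_iq,s_iqt_i)$, which is again a T-link by definition. Theorem~\ref{Thm:SatelliteMultTwists} then identifies the companion and pattern explicitly as T-links (twisted torus knots in the examples of Lemma~\ref{Lem:HyperbolicPandC}), and hyperbolicity of \emph{both} JSJ pieces is proved for all parameter values directly from Lee's classification of twisted torus knots together with Ito's result handling the braid axis (pattern in the solid torus)---no appeal to Thurston's hyperbolic Dehn surgery, no ``sufficiently large $n$'' hypothesis, and no need for the parent pieces to be hyperbolic before filling. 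If you tried to repair your outline, the natural way to certify both the Lorenz property and the hyperbolic pieces would be to specialize to exactly this kind of augmented T-link parent, at which point you would be reproducing the paper's argument rather than giving an alternative to it.
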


Recall that a \emph{satellite knot} is built by starting with a knot $P$ in $S^1\times D^2$ that is not isotopic into a ball in $S^1\times D^2$, and embedding it in the tubular neighbourhood $N(C)$ of a knot $C$ in $S^3$ via a homeomorphism $f\from D^2\times S^1\to N(C)$. The \emph{satellite} $K$ is $f(P)$. The knot $P$ is called the \emph{pattern}, and $C$ is called the \emph{companion}. We also say that $K$ is a satellite of $C$. Finally, we do not allow $f$ to introduce additional twisting: we require $f$ to send the $0$-framed longitude $S^1\times\{1\}$ of $S^1\times D^2$ to the standard longitude of $N(C)$. 

When $P$ is  isotopic to a $(p,q)$-torus knot on a torus parallel to the boundary of $S^1\times D^2$, we say that $K$ is a \emph{cable knot}, or $K$ is the $(p,q)$-cable on $C$. 

The Lorenz knots of \refthm{SatelliteLorenzIntro} have companion a Lorenz knot and pattern within a class of twisted torus knots that are equivalent to Lorenz knots, but the pattern cannot be a torus knot. Thus we do not have a counterexample to the part of Morton's conjecture concerning the companion, but only the part concerning the pattern: these knots are not cables. 
Birman and Kofman ask a more general question than Morton's conjecture~\cite[Question~5]{BirmanKofman}: Can a Lorenz knot be a satellite of a non-Lorenz knot? We have no evidence that this is not the case. 

Indeed, we give additional evidence in this paper that Lorenz knots can only be satellites of other Lorenz knots. We upgrade Birman and Kofman's question to a conjecture, modifying Morton's conjecture.

\begin{conjecture}[Lorenz satellite conjecture]\label{Conj:LorenzSatellite}
A Lorenz knot that is a satellite has companion a Lorenz knot. Its pattern, when embedded in an unknotted solid torus in $S^3$, is equivalent to a Lorenz knot in $S^3$.
\end{conjecture}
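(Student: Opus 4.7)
The plan is to split \refconj{LorenzSatellite} into its two assertions --- that the companion is a Lorenz knot, and that the pattern embedded in an unknotted solid torus is equivalent to a Lorenz knot --- and to attack both through the Dehn surgery strategy suggested by the abstract.

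Suppose $K$ is a Lorenz knot that is a satellite with companion $C$ and essential torus $T\subset S^3\setminus K$. I would first try to realize $K$ as a sufficiently high Dehn filling on some ``parent'' link $L$, that is, a hyperbolic link whose complement contains the Lorenz template (or a relevant subtemplate) and such that filling all but one component produces $K$. The key idea is that for Lorenz knots obtained by sufficiently high filling, Thurston's hyperbolic Dehn surgery theorem together with the standard fact that essential surfaces persist under high filling ensures that $T$ descends from an essential torus $\widetilde{T}$ in $S^3\setminus L$. The first major step is then to identify $\widetilde{T}$ concretely, using the explicit cusped structure of parent Lorenz link complements (for example via fully augmented link models or the templates in work of Dehornoy). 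Once $\widetilde{T}$ is located, its core, pushed forward under Dehn filling, should be read off as a closed orbit on the Lorenz template, giving that the companion $C$ is itself Lorenz.

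For the pattern assertion, let $V$ denote the solid torus bounded by $T$ that contains $K$. The pattern $P$ is $K$ regarded inside $V$, and the goal is to show that after an orientation-preserving homeomorphism $V\to S^1\times D^2$ onto an unknotted solid torus respecting the $0$-framing, the image of $P$ is equivalent to a Lorenz knot. I would attempt this by tracking the Birman--Williams symbolic coding of $K$ and restricting it to the times the orbit spends inside $V$, hoping to extract a Lorenz word that realizes $P$ on a standardly embedded template. The heart of the argument is a combinatorial claim: that the symbolic coding of $K$ decomposes into a repeated block structure compatible with the parallel strands that $P$ wraps around $V$, and that this block structure itself encodes a valid Lorenz word.

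The step I expect to be the main obstacle is putting the essential torus $T$ into good position with respect to the Lorenz template, which is a branched surface rather than a surface. Normal surface theory is considerably weaker for branched surfaces, and the semi-flow must be respected in any simplification; in particular, ordinary innermost-disk arguments do not obviously reduce $T\cap\mathcal{T}$ into flow-transverse position. A secondary obstacle is that this plan only addresses Lorenz knots visibly arising from high Dehn filling on a parent link; for Lorenz knots falling outside any such family, the method gives no information, which is presumably why the abstract advertises the conjecture only ``very broadly'' rather than in full generality.
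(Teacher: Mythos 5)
The statement you are trying to prove is stated in the paper as a \emph{conjecture} (the Lorenz satellite conjecture), and the paper does not prove it; it only establishes it for particular families and rules out satellites elsewhere. Concretely, the paper's evidence consists of: (i) \refthm{SatelliteMultTwists} and \refthm{SatelliteIntro}, which exhibit explicit satellite T-links whose companion is a T-link and whose pattern, re-embedded in an unknotted solid torus, is again a T-link; (ii) \refthm{NoTwistToroidal} and \refthm{BirmanWilliamsExtend}, which give further satellite families with Lorenz companion and pattern; and (iii) \refthm{AugHyperbolic} and \refcor{FullTwistConj}, which show that for the augmented parent links $T(p,q)\cup J_{0,a_1}\cup\dots\cup J_{0,a_n}$ with no $a_i$ a multiple of $q$, high $1/b_i$ fillings are hyperbolic, so no new satellites arise there. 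Your proposal, by contrast, aims at the full conjecture, and as written it is not a proof: the two central steps --- identifying the essential torus $\widetilde{T}$ in a parent link complement and reading its core off the Lorenz template to conclude the companion is Lorenz, and extracting a ``Lorenz word'' for the pattern from the symbolic coding --- are exactly the parts you leave open, and you acknowledge as much. A proof proposal whose heart is an unverified combinatorial claim plus an unresolved positioning problem for a torus relative to a branched surface is a research program, not an argument.

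There is also a concrete error in the mechanism you rely on. You assert that for Lorenz knots obtained by sufficiently high filling, an essential torus $T$ in $S^3\setminus K$ ``descends'' to an essential torus in the parent link complement. The standard consequences of Thurston's hyperbolic Dehn surgery theorem run the other way: if the parent is hyperbolic, then all sufficiently high fillings are hyperbolic, hence \emph{atoroidal}, so the satellite situation simply does not occur and there is no $\widetilde{T}$ to find --- this is precisely how the paper uses high filling in \refcor{FullTwistConj}. If instead the parent complement is itself toroidal, then one must analyse it directly (as the paper does in \refthm{AugHyperbolic}(2) and \reflem{MultipleJ}), and there is no general persistence statement guaranteeing that an essential torus downstairs comes from one upstairs after an arbitrary high filling. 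Finally, your approach by construction says nothing about Lorenz satellite knots that do not arise from a high filling on such a parent, so even if the gaps were closed you would have reproduced (by a different and heavier route) only the kind of partial evidence the paper already provides, not the conjecture itself.
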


It would be useful to classify all satellite, torus, and hyperbolic Lorenz knots. In terms of hyperbolic Lorenz knots, some work has been done to identify families by Gomes, Franco and Silva~\cite{GomesFrancoSilva:Partial, GomesFrancoSilva:Farey}. 
Once a Lorenz knot is known to be hyperbolic, various geometric properties can be studied, such as volumes, for example~\cite{CFKNP, Rodriguez-Migueles}.

We will study satellite Lorenz links from the point of view of T-links, introduced by Birman and Kofman~\cite{BirmanKofman}.
For $2\leq r_1< \dots < r_k$, and all $s_i>0$, the T-link $T((r_1,s_1), \dots, (r_k,s_k))$ is defined to be the closure of the following braid:
\[ (\sigma_1\sigma_2\dots\sigma_{r_1-1})^{s_1}(\sigma_1\sigma_2\dots\sigma_{r_2-1})^{s_2}\dots(\sigma_1\sigma_2\dots\sigma_{r_k-1})^{s_k}.\]
Here $\sigma_i$ is a standard generator of the braid group, giving a negative crossing between the $i$-th and $(i+1)$-th strands.
Birman and Kofman showed that T-links exactly coincide with Lorenz links~\cite[Theorem~1]{BirmanKofman}.

Note that T-links include some well-known knots. When $k=2$, and $s_1=sr_1$ is a multiple of $r_1$, the T-knot $T((r_1,sr_1),(r_2,s_2))$ is the twisted torus knot $K(r_2,s_2;r_1,s)$, introduced by Dean~\cite{Dean}.
The hyperbolicity of twisted torus knots has been studied extensively, particularly by Lee, who has classified those with $s>0$ that are torus knots~\cite{Lee:PosTwistedTorus-Torus, Lee:2018Satellite}, and has classified those that are satellites for $|s|>1$~\cite{Lee:2018Satellite}. Hence we have a complete picture of the geometry of twisted torus knots in these cases. 

In this paper, we give large families of satellite knots and links that are T-links. 

\begin{theorem}\label{Thm:SatelliteIntro}
  Let $a_1, b_1, \dots, a_n, b_n, s_1, t_1, \dots, s_m, t_m, p,q$ be integers satisfying:
  \[ 1<a_1<\dots<a_n<q < qs_1<\dots<qs_m < p, \mbox{ and } b_i, t_i >0 \mbox{ for all $i$}. \]
Then the T-link
\[ T((a_1,a_1b_1), \dots, (a_n,a_nb_n),(s_1q,s_1qt_1), \dots, (s_mq, s_mqt_m),(p,q)) \]
is a satellite link with companion the T-link
\[ T((s_1,t_1s_1), \dots, (s_m,t_ms_m+1)) \]
and pattern the T-link
\[ T((a_1,a_1b_1),\dots,(a_n,a_nb_n),(q,p+s_1^2qt_1+\dots+s_m^2qt_m)).\]
\end{theorem}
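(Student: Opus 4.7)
The plan is to realize the T-link of the theorem as a cabled braid around the companion, with the pattern braid inserted on one bundle, thereby directly exhibiting the satellite structure. I would write $\beta_K$, $\beta_C$, $\beta_P$ for the standard braid words of the T-links $K$, $C$, and $P$ on $p$, $s_m$, and $q$ strands respectively. By the standard satellite-braid construction using the $0$-framed longitude, the satellite of $\widehat{\beta_C}$ by $\widehat{\beta_P}$ is the closure on $s_m q$ strands of $(q\text{-cable of }\beta_C)\cdot\iota(\beta_P)$, where $\iota(\beta_P)$ inserts $\beta_P$ on the first bundle of $q$ strands. The goal is thus to show that this satellite braid, closed on $s_m q$ strands, and $\beta_K$, closed on $p$ strands, represent the same link in $S^3$.

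The main technical tool is the cabling decomposition of a full twist $\Delta_{sq}^{2t}$ on $sq$ strands grouped into $s$ bundles of $q$:
\[
\Delta_{sq}^{2t} \;=\; (q\text{-cable of }\Delta_s^{2t})\cdot\prod_{j=1}^{s}\Delta_q^{2t}[\text{bundle }j].
\]
I would apply this to each middle factor $(\sigma_1\cdots\sigma_{s_i q -1})^{s_i q t_i} = \Delta_{s_i q}^{2 t_i}$ of $\beta_K$. Since the cable components have trivial permutation, they commute with the internal bundle twists, and across all $i$ they combine into the $q$-cable of $\prod_i (\sigma_1\cdots\sigma_{s_i - 1})^{t_i s_i}$, producing the ``full-twist'' portion of $\beta_C$. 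To obtain the trailing ``$+1$'' in the exponent $t_m s_m + 1$ of $\beta_C$, and to reduce from $p$ strands to $s_m q$ strands, I would rewrite the outer $(p,q)$-torus braid $(\sigma_1\cdots\sigma_{p-1})^q$ via the classical torus-braid identity and iterated Markov destabilization, producing one extra generator $(\sigma_1\cdots\sigma_{s_m-1})$ at the companion level plus $p$ additional half-twists on the first bundle of the pattern. The inner $(a_i, a_i b_i)$ factors act only on $a_i < q$ strands (hence within the first bundle) and become the inner portion of $\beta_P$ unchanged.

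The main obstacle is the detailed bookkeeping that yields the coefficient $s_i^2$ in the pattern's final exponent $p + \sum_i s_i^2 q t_i$. The internal bundle twists $\Delta_q^{2 t_i}$ on the $s_i$ bundles contribute $s_i\cdot t_i q$ half-twists directly when conjugated onto the first bundle; the remaining factor of $s_i$ must come from the interaction of this conjugation with the cable of the companion's full twists, effectively a framing correction between the braid-axis framing implicit in the cabling decomposition and the $0$-framing demanded by the satellite construction. Working out this correction, and verifying that the Markov destabilization of the outer torus braid is compatible with it, forms the technical crux of the proof. As a sanity check I would first verify the base case $n=0$, $m=1$: there both the companion $T(s,ts+1)$ and pattern $T(q,p+s^2qt)$ are torus knots, and the claim reduces to an explicit identification of $T((sq,sqt),(p,q))$ with the $(p+s^2qt,q)$-cable of $T(s,ts+1)$, which can be checked against known cable-of-torus-knot descriptions before carrying out the full induction.
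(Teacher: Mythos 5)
Your strategy---decomposing each full twist $(s_iq,s_iqt_i)=\Delta_{s_iq}^{2t_i}$ as the $q$-cable of $\Delta_{s_i}^{2t_i}$ times internal bundle twists (that identity is correct), and matching the result against the standard satellite braid of the companion---is genuinely different from the paper's route, which instead realizes the T-link by $1/t_i$ Dehn filling on augmentation circles $J_{0,s_iq}$ around $\tau_r*T(p,q)$, isotopes them off the Heegaard torus (\reflem{MultipleJ}), identifies the companion via \reflem{InvertC}, and computes the pattern by a Rolfsen twist in \refthm{SatelliteMultTwists}. But as written your argument has a genuine gap, located exactly where you defer to ``the technical crux.'' First, the outer factor $(\sigma_1\cdots\sigma_{p-1})^q$ cannot be disposed of by ``iterated Markov destabilization'' as stated: each of the strands $s_mq+1,\dots,p$ passes through that braid in $q$ bands of crossings, so no strand is immediately removable, and the reduction from $p$ strands to the $s_mq$-strand cabled form is really the exchange of the two solid tori bounded by the Heegaard torus (the content of \reflem{InvertTpq} together with the unwinding of \reflem{InvertC}); it is precisely this move that simultaneously produces the extra letter $(\sigma_1\cdots\sigma_{s_m-1})$ (the $+1$ in $t_ms_m+1$) at the companion level and the power of $(\sigma_1\cdots\sigma_{q-1})$ at the pattern level. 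You assert this output without deriving it, and an explicit algebraic substitute is a substantial piece of work, not a routine sequence of destabilizations.

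Second, the framing bookkeeping---which is the entire quantitative content of the theorem---is left undone, and the per-factor accounting you sketch does not visibly balance. Sliding the internal bundle twists around the closure into a single bundle contributes $\sum_i s_it_i$ full twists on $q$ strands, and converting the blackboard-framed cable of the closed companion braid into a $0$-framed satellite adds $w(\beta_C)=\sum_i t_is_i(s_i-1)+(s_m-1)$ further full twists, for a total of $\sum_i s_i^2t_i+(s_m-1)$; combined with your claimed contribution $(\sigma_1\cdots\sigma_{q-1})^p$ from the outer braid, this exceeds the target $(q,\,p+\sum_i s_i^2qt_i)$ by $s_m-1$ full twists on $q$ strands. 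So either the outer braid must contribute only $(\sigma_1\cdots\sigma_{q-1})^{p-q(s_m-1)}$ in the blackboard picture, or the framing correction must be computed with respect to only part of the companion word; your proposal does not determine which convention is in force, and attributing ``the remaining factor of $s_i$'' factorwise to a framing correction is an expectation, not a proof. Until the strand reduction and the writhe computation are carried out compatibly (the paper does both at once via the surgery description and the standard twist formula), the coefficient $s_i^2$ and the $+1$ remain unestablished, so what you have is a plausible plan rather than a proof.
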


We actually prove something slightly stronger, namely in  \refthm{SatelliteMultTwists} we show that we have a satellite independent of whether the first $n$ entries are full-twists or not. That is, \refthm{SatelliteIntro} is stated for T-links containing only braids that are full-twists $(a_i,a_ib_i)$ for $i=1, \dots n$, but a similar result is true for $(a_i,b_i)$ with the second entry not a multiple of $a_i$, i.e.\ not a full twist. We still require full-twists in the entries $(qs_i,qs_it_i)$. 

Our theorems so far state that certain knots and links must be satellite. To prove \refconj{LorenzSatellite} and characterise all satellite Lorenz knots, we need to know that other knots are \emph{not} satellite. While we cannot do this in full generality, again we are able to make some progress in the case of full twists.

\begin{named}{\refcor{FullTwistConj}}
Let $p,q$ be relatively prime integers with $1<q<p$, and let $a_1, \dots, a_n$ and $b_1, \dots, b_n$ be integers such that $1<a_1<\dots<a_n<p$ and $b_i>0$, and no $a_i$ is a multiple of $q$. Then there exists $B \gg 0$ such that if each $b_i>B$, then
$T((a_1, a_1b_1), \dots, (a_n,a_nb_n), (p,q))$ is hyperbolic.
\end{named}

So far, our results concerning Lorenz knots that are satellite knots give evidence for the Lorenz satellite conjecture, \refconj{LorenzSatellite}, but they only apply to T-links obtained by full twisting. We also give a family of T-links not obtained by adding full twists to portions of torus links in any obvious manner, but are still satellite, and still satisfy \refconj{LorenzSatellite}.
These are constructed to have a fixed companion that is a torus knot $T(c, c-1)$. 

\begin{named}{\refthm{NoTwistToroidal}}
  Choose positive integers as follows. Let $c\geq 3$, $r\geq 2$, $1\leq k\leq r-1$. Let $2\leq a_1 < \dots < a_n \leq r-k$, and let $b_1, \dots, b_n>0$. Then the T-link
  \[ T((a_1,b_1),\dots,(a_n,b_n),(rc-k, r-k), (rc, r(c-2)+k)) \]
  is a satellite link with companion the torus knot $T(c,c-1)$ and pattern the T-link
  \[ T((a_1,b_1), \dots, (a_n,b_n),(r-k, r-k),(r,r(c-1)^2+r(c-2)+k)). \qedhere \]
\end{named}

Theorems~\ref{Thm:SatelliteIntro} and~\ref{Thm:NoTwistToroidal} show that large families of Lorenz knots that are satellite knots satisfy \refconj{LorenzSatellite}. However, proving the full conjecture requires more than showing the conjecture for families. Every possible satellite knot that is a Lorenz knot must be shown to have companion and pattern as claimed. To that end, results such as \refcor{BWIntro} also make progress towards the conjecture, since they show that additional large families of Lorenz knots are hyperbolic, and therefore do not need to be considered for proving the conjecture. If we could completely classify all Lorenz knots that are hyperbolic or torus knots in terms of their description as a T-link, then proving the conjecture would require analysing the remaining T-links and their pieces in a torus decomposition.

In fact, a similar construction to that of \refthm{NoTwistToroidal} gives a partial converse to \refconj{LorenzSatellite}.

\begin{named}{\refcor{BWIntro}}
  For any two one-component T-links $K_1$ and $K_2$, there exists a satellite T-link $K$ such that after cutting $S^3-K$ along an essential torus, the components consist of the complement of $K_1$ in $S^3$, and the complement of $K_2$ in a solid torus.
\end{named}

Corollary~\ref{Cor:BWIntro} can be seen as a generalisation of a theorem of Birman and Williams \cite[Theorem~6.2]{BirmanWilliams}, who showed a similar result for Lorenz knot companions, and torus knot patterns. The proof is similar to theirs: we construct a link by arranging the pattern carefully within a solid torus neighbourhood of the companion, and prove that the result is still a T-link. These are the simplest examples of satellite T-links. Their construction gives evidence for the following. 

\begin{question}\label{Ques:NormQuestion}
  Let $K_1$ and $K_2$ be any Lorenz knots (i.e.\ one-component T-links). Is the satellite knot with pattern $K_1$ and companion $K_2$ always a Lorenz knot?
\end{question}

Observe that \refcor{BWIntro} gives evidence that the answer to Question~\ref{Ques:NormQuestion} is yes. However, it does not quite prove it, because we require $K_1$ to be twisted further before it fits into the position of a pattern for a T-link. This twisting is a homeomorphism of the solid torus, so does not affect the homeomorphism type of the knot in the solid torus. However, it does affect the satellite knot.


\subsection{Organisation}
In \refsec{Tangles} we introduce terminology and a few examples that will be used throughout the paper. Section~\ref{Sec:TwistingOnce} reviews an argument of Lee~\cite{Lee:Unknotted}, which finds an essential torus in a class of twisted torus knots that can be related to a simple case of T-links. This argument is extended to more general T-links in \refsec{TwistingMult}, allowing us to prove \refthm{SatelliteMultTwists}, which implies \refthm{SatelliteIntro}. We then give the proof of \refthm{SatelliteLorenzIntro}.

In \refsec{Characterising}, we prove \refthm{AugHyperbolic}, giving hyperbolic and satellite examples of augmented T-links.
This gives strong evidence for our amended \refconj{LorenzSatellite}.

In \refsec{NoTwistToroidal} we prove Theorem~\ref{Thm:NoTwistToroidal} and~\refcor{BWIntro}.

\section{Torus links and tangles}\label{Sec:Tangles}

In this section we set up results for links that will become the patterns in the satellites of the main theorem. Eventually they will be used to deal with the portion of the braid of a T-link containing the pairs $(a_1,b_1), \dots, (a_n,b_n)$. We keep the results as general as possible.

We first set up notation.

Throughout, let $D^2$ denote the unit disc in $\CC$, and $I=[-1,1]$ the unit disc in $\RR$. 

\begin{definition}\label{Def:TangleRStrands}
  A \emph{tangle on $r$ strands} is a compact 1-manifold $\tau$ embedded in a ball $B$ arranged as a solid cylinder $D^2\times [-1,1]$, with $\tau \cap (D^2\times\{1\})$ consisting of $r$ points, equally spaced on the axis $D^2\cap \RR$. Similarly $\tau \cap (D^2\times\{-1\})$ consists of $r$ points, equally spaced on $D^2\cap \RR$, and $\tau \cap (\bdy D^2\times I) = \emptyset$. An example for $r=3$ is shown on the left of \reffig{TangleBraid}.
\end{definition}

\begin{figure}
  \centering
  \includegraphics{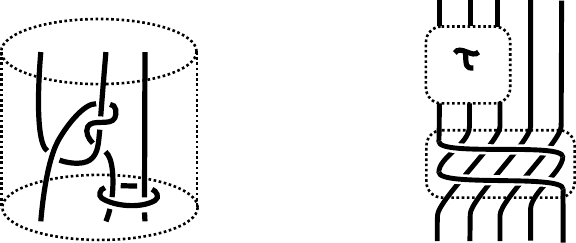}
  \caption{On the left is an example of a tangle on $r$ strands, for $r=3$. On the right is an example of a braid $\tau*(p,q)$, for $p=5, q=2$.}
  \label{Fig:TangleBraid}
\end{figure}

\begin{definition}\label{Def:TangleSum}
  Let $1<r\leq p$, $q>1$ be integers. Let $(p,q)$ denote the braid $(\sigma_1\dots \sigma_{p-1})^q$ on $p$ strands, so the torus link $T(p,q)$ is the closure of the braid $(p,q)$. Let $\tau_r$ be any tangle on $r$ strands. Beginning with the trivial braid on $p$ strands, replace a neighbourhood of the leftmost $r$ strands with $\tau_r$, followed by the braid $(p,q)$. Denote the result by $\tau_r*(p,q)$. An example is shown in \reffig{TangleBraid}, right. The braid closure of $\tau_r*(p,q)$ is a link, which we will denote by $\tau_r * T(p,q)$.
Because $\tau_r$ is a tangle and not necessarily a braid, the operation $*$ should be read left to right.
\end{definition}

\begin{lemma}\label{Lem:InvertTpq}
  Let $1<r\leq \min\{p,q\}$ be integers. Let $\tau_r$ be a tangle on $r$ strands in $D^2\times [-1,1]$. Let $\overline{\tau}_r$ denote the tangle on $r$ strands obtained from $\tau_r$ by rotating $180^\circ$ about the axis $(D^2\cap\RR)\times \{0\}\subset D^2\times [-1,1]$. Then the link $\tau_r * T(p,q)$ is equivalent to the link $\overline{\tau}_r * T(q,p)$.

  Moreover, let $F$ denote the Heegaard torus on which $T(p,q)$ is projected. Then the equivalence is by a small isotopy supported in a regular neighbourhood of $F$, followed by a homeomorphism of $S^3$ fixing $F$ and switching the two solid tori bounded by $F$. 
\end{lemma}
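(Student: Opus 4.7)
My plan is to realize the equivalence by first isotoping $\tau_r * T(p,q)$ into standard position with respect to the Heegaard torus $F$, and then applying the swap homeomorphism of $S^3$ that exchanges the two solid tori of the genus-one splitting.

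Let $V_1, V_2$ be the two solid tori with $F = \bdy V_1 = \bdy V_2$, and let $c_1, c_2$ be their cores. In the braid presentation of $\tau_r * T(p,q)$, the braid axis is $c_2$, the $p$ strands of $(p,q)$ lie on $F$ running $q$ times around the longitude of $V_1$, and the tangle $\tau_r$ sits near $c_2$ inside $V_2$. First I would isotope $\tau_r * T(p,q)$ by a small isotopy supported in a regular neighbourhood of $F$ so that the torus knot portion of $T(p,q)$ lies on $F$ except in a small disk region $D \subset F$, and the ball $B$ containing $\tau_r$ is attached to $V_2$ along $D$. The hypothesis $r \leq \min\{p,q\}$ ensures that $r$ adjacent meridional arcs of $T(p,q)$ on $F$ can be gathered and isolated in $D$, so that the identification of $B$ with the tangle cylinder $D^2 \times I$ matches the placement of the tangle endpoints on $D^2 \cap \RR$.

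Next I would apply the swap homeomorphism $\phi\from S^3 \to S^3$, which fixes $F$ setwise and exchanges $V_1 \leftrightarrow V_2$. Using coordinates $S^3 = \{(z,w) \in \CC^2 : |z|^2 + |w|^2 = 1\}$ with $F = \{|z| = |w| = 1/\sqrt{2}\}$, I would take $\phi(z,w) = (w,z)$, which is orientation-preserving and sends the standard torus knot $T(p,q)$ on $F$ to $T(q,p)$. The ball $B \subset V_2$ maps to $\phi(B) \subset V_1$, attached to $F$ along $\phi(D)$, so the new link is of the form $(\text{tangle}) * T(q,p)$, presented as a braid closure with axis $c_1$. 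To identify the new tangle as $\overline{\tau}_r$, I would observe that $\phi$ interchanges meridian and longitude of each solid torus, so the axial direction of $B$ (parallel to $c_2$) becomes the axial direction of $\phi(B)$ (parallel to $c_1$), but with reversed orientation relative to the braid direction of the $(q,p)$ presentation. In the cylinder coordinates $D^2 \times I$, this reverses the $I$-factor while preserving the horizontal diameter $D^2 \cap \RR$ on which the strand endpoints lie, giving exactly the $180^\circ$ rotation about $(D^2 \cap \RR) \times \{0\}$ that defines $\overline{\tau}_r$.

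I expect the main obstacle to be this final identification: verifying that the involution induced on the tangle ball by the Heegaard swap is precisely the specified rotation, rather than some other symmetry (rotation about the vertical axis of the cylinder, for instance, would produce a different tangle). This will call for a careful local coordinate check of $\phi$ near a point of $F$, with compatible choices of orientation for the meridian/longitude framing of each solid torus and the braid direction in each of the two braid presentations of the torus knot.
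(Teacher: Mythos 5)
Your proposal follows essentially the same route as the paper: a small isotopy supported near $F$ that slides the tangle onto $r$ of the $q$ meridianal strands (using $r\leq\min\{p,q\}$), followed by the standard involution of $S^3$ exchanging the two solid tori bounded by $F$ (the paper's ``rotation about a diagonal of the square,'' your $(z,w)\mapsto(w,z)$), with the induced symmetry of the tangle cylinder then identified as the $180^\circ$ rotation defining $\overline{\tau}_r$. The paper carries out that last identification pictorially rather than by the local coordinate check you outline, but the argument is the same, and the subtlety you flag (rotation about the horizontal axis versus some other symmetry of the cylinder) is exactly the point the paper's figure is meant to settle.
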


Informally, one sees the equivalence of \reflem{InvertTpq} by sliding $\tau_r$ from longitudinal strands to meridianal strands of $T(p,q)$, then moving one's head from one side of $F$ to the other in $S^3$, keeping everything else fixed. See \reffig{TangleSum}.

\begin{figure}
  \centering
\begingroup%
  \makeatletter%
  \providecommand\color[2][]{%
    \errmessage{(Inkscape) Color is used for the text in Inkscape, but the package 'color.sty' is not loaded}%
    \renewcommand\color[2][]{}%
  }%
  \providecommand\transparent[1]{%
    \errmessage{(Inkscape) Transparency is used (non-zero) for the text in Inkscape, but the package 'transparent.sty' is not loaded}%
    \renewcommand\transparent[1]{}%
  }%
  \providecommand\rotatebox[2]{#2}%
  \newcommand*\fsize{\dimexpr\f@size pt\relax}%
  \newcommand*\lineheight[1]{\fontsize{\fsize}{#1\fsize}\selectfont}%
  \ifx\svgwidth\undefined%
    \setlength{\unitlength}{250.80540848bp}%
    \ifx\svgscale\undefined%
      \relax%
    \else%
      \setlength{\unitlength}{\unitlength * \real{\svgscale}}%
    \fi%
  \else%
    \setlength{\unitlength}{\svgwidth}%
  \fi%
  \global\let\svgwidth\undefined%
  \global\let\svgscale\undefined%
  \makeatother%
  \begin{picture}(1,0.3177631)%
    \lineheight{1}%
    \setlength\tabcolsep{0pt}%
    \put(0,0){\includegraphics[width=\unitlength,page=1]{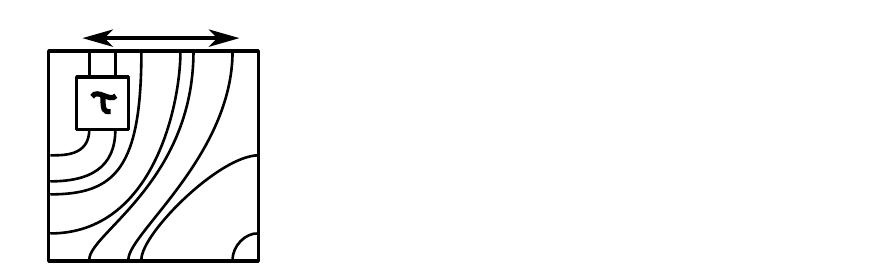}}%
    \put(0.16236754,0.28893081){\color[rgb]{0,0,0}\makebox(0,0)[lt]{\lineheight{0}\smash{\begin{tabular}[t]{l}$p$\end{tabular}}}}%
    \put(0,0){\includegraphics[width=\unitlength,page=2]{TangleTpq2.pdf}}%
    \put(-0.0021026,0.07960524){\color[rgb]{0,0,0}\makebox(0,0)[lt]{\lineheight{0}\smash{\begin{tabular}[t]{l}$q$\end{tabular}}}}%
    \put(0,0){\includegraphics[width=\unitlength,page=3]{TangleTpq2.pdf}}%
    \put(0.94219807,0.11093282){\color[rgb]{0,0,0}\makebox(0,0)[lt]{\lineheight{0}\smash{\begin{tabular}[t]{l}$p$\end{tabular}}}}%
    \put(0.79547076,0.28914448){\color[rgb]{0,0,0}\makebox(0,0)[lt]{\lineheight{0}\smash{\begin{tabular}[t]{l}$q$\end{tabular}}}}%
    \put(0,0){\includegraphics[width=\unitlength,page=4]{TangleTpq2.pdf}}%
  \end{picture}%
\endgroup%

  \caption{On the left is $\tau*T(p,q)$. Isotope the tangle $\tau$ in the direction of the arrow shown to move $\tau$ into a tangle in the meridianal direction. Then rotate $180^\circ$ in a diagonal axis to exchange the solid tori, and to exchange meridian and longitude. We obtain $\overline{\tau}_r*T(q,p)$ on the right.}
  \label{Fig:TangleSum}
\end{figure}

\begin{proof}[Proof of \reflem{InvertTpq}]
The meridian of one of the solid tori bounded by $F$ is the longitude of the other, and vice-versa. The standard homeomorphism taking the closed braid corresponding to $T(p,q)$ to the closed braid corresponding to $T(q,p)$ exchanges meridian and longitude and switches the two solid tori bounded by $F$. We need to consider how this affects the tangle $\tau_r$.

The tangle begins on the leftmost $r$ strands of the $p$ strands running in the longitudinal direction on the closed $T(p,q)$. Because the tangle is on $r \leq \min\{p,q\}$ strands, we may isotope it slightly to move it onto the $q$ parallel strands running in a meridianal direction; see \reffig{TangleSum}, left.

The homeomorphism of $S^3$ that switches solid tori and switches the roles of meridian and longitude can be expressed by a rotation about a diagonal of the square gluing to the torus $F$, as in \reffig{TangleSum}, middle. Then when we perform the homeomorphism (moving the head), the braid is now on the $q$ strands in the longitudinal direction of $T(q,p)$, on the left, but rotated vertically from the original position in $T(p,q)$; see \reffig{TangleSum}.
\end{proof}

The following are examples of tangles we will use in this paper.

\begin{example}[Braid associated with a T-link]
Let $1<r_1<\dots<r_k \leq r$, and $s_1, \dots, s_k$ be positive integers, and let $\tau_r = (r_1,s_1)*\dots*(r_k,s_k)$ be a braid of the form:
\[ (\sigma_1\dots \sigma_{r_1-1})^{s_1}\dots(\sigma_1\dots \sigma_{r_k-1})^{s_k} \]
Then $\tau_r * T(p,q)$ is the T-link $T((r_1,s_1),\dots(r_k,s_k),(p,q))$.

However, $\overline{\tau}_r*T(q,p)$ is not in the form of a T-link, for most general values of $s_1,\dots,s_k$. Rather, it is the braid
\[ (\sigma_{r_k-1}\sigma_{r_k-2}\dots\sigma_{1})^{s_k}\dots(\sigma_{r_1-1}\sigma_{r_1-2}\dots\sigma_1)^{s_1}(\sigma_1\sigma_2\dots\sigma_{q-1})^p. \]

The following lemma was observed by Birman and Kofman~\cite[Corollary~3]{BirmanKofman}.

\begin{lemma}\label{Lem:BirmanKofmanSym}
If each $s_i=t_ir_i$ is a multiple of $r_i$, then
\[ \tau_r = (r_1,s_1)*\dots*(r_k,s_k) = (\sigma_1\dots \sigma_{r_1-1})^{s_1}\dots(\sigma_1\dots \sigma_{r_k-1})^{s_k} \]
is equivalent to $\overline{\tau}_r$ in the braid group. Hence $\tau_r*T(q,p)=T((r_1,s_1), \dots, (r_k,s_k),(p,q))$ is equivalent to $\overline{\tau}_r*T(q,p)$ as closed braids.
\end{lemma}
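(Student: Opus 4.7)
My plan is to reduce the lemma to a single identity in the braid group $B_r$: that $\tau_r$ and $\overline{\tau}_r$ represent the same element. Once that identity is in hand, both closed-braid equivalences in the conclusion will follow by combining it with \reflem{InvertTpq}.

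To prove the braid identity, I will write $\tau_r = A_1 A_2 \cdots A_k$ with $A_i := (\sigma_1 \sigma_2 \cdots \sigma_{r_i-1})^{s_i}$. The hypothesis $s_i = t_i r_i$ says that $A_i = \Delta_{r_i}^{2 t_i}$, a power of the full twist $\Delta_{r_i}^2 := (\sigma_1 \cdots \sigma_{r_i-1})^{r_i}$ on the first $r_i$ strands. I will invoke two standard facts about this full twist: it is invariant under the $180^\circ$ horizontal flip (equivalently $(\sigma_1 \cdots \sigma_{r_i-1})^{r_i} = (\sigma_{r_i-1} \cdots \sigma_1)^{r_i}$ in $B_{r_i}$, which yields $\overline{A_i} = A_i$), and it generates the center of $B_{r_i}$. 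From these, the factors $A_i$ pairwise commute in $B_r$: for $i \le j$, the inclusion $B_{r_i} \subset B_{r_j}$ places $A_i$ inside $B_{r_j}$, while $A_j$ is central in $B_{r_j}$, so $A_i A_j = A_j A_i$. Combining commutativity with $\overline{A_i} = A_i$ yields
\[ \overline{\tau}_r = \overline{A_k}\, \overline{A_{k-1}} \cdots \overline{A_1} = A_k A_{k-1} \cdots A_1 = A_1 A_2 \cdots A_k = \tau_r, \]
as claimed.

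To conclude, I will apply \reflem{InvertTpq} to $\tau_r$, obtaining that $\tau_r * T(p,q)$ is equivalent as a link to $\overline{\tau}_r * T(q,p)$. The left side is by definition the T-link $T((r_1,s_1),\ldots,(r_k,s_k),(p,q))$, and since $\tau_r = \overline{\tau}_r$ in $B_r$, the closures of $\tau_r \cdot (q,p)$ and $\overline{\tau}_r \cdot (q,p)$ on $q$ strands agree, so both equivalences in the lemma follow. No serious obstacle is anticipated: the whole argument rests on the centrality and $180^\circ$-symmetry of the full twist, together with the standard inclusions $B_{r_1} \subset B_{r_2} \subset \cdots \subset B_{r_k} \subset B_r$.
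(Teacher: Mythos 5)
Your proof is correct and follows essentially the same route as the paper: each factor $(r_i,s_i)$ is a power of the full twist on the first $r_i$ strands, which is invariant under the $180^\circ$ flip and central in $B_{r_i}$, so the factors can be reordered to give $\overline{\tau}_r=\tau_r$, and the closed-braid statement then follows from Lemma~\ref{Lem:InvertTpq}. Your argument is in fact slightly more careful than the paper's, since you spell out exactly which commutations are justified via the inclusions $B_{r_i}\subset B_{r_j}$ (the full twist on $r_i$ strands is central only in $B_{r_i}$, not in all of $B_r$), which is the precise form of the paper's appeal to ``full twists commute.''
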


\begin{proof}
The braid corresponding to $(r_i,s_i)$ is equivalent to $t_i$ full twists, which is invariant under rotation in $D^2\times\{0\}$. Moreover, full twists commute with other elements of the braid group (see, for example~\cite{Gonzalez-Meneses:BraidGroup}), 
so we may adjust them to be in the order required in the definition of a T-link. 
Then the T-link $\tau_r*T(p,q)=T((r_1,t_1r_1),\dots,(r_k,t_kr_k),(p,q))$ becomes the T-link $T((r_1,t_1r_1),\dots,(r_k,t_kr_k),(q,p))$ under the equivalence of \reflem{InvertTpq}.
\end{proof}
\end{example}

\begin{example}[Fully augmented T-link]
Again let $1<r_1<\dots<r_k<r$. Start with the trivial braid consisting of $r$ intervals of the form $* \times I$ in $D^2\times I$. For each $r_i$, let $J_{r_i}$ be an unknot in $D^2\times I$ bounding a level disc meeting the first $r_i$ strands of the trivial braid. Let $\tau_r$ be the union of the trivial braid and the unknots $J_{r_1},\dots,J_{r_k}$. Observe that again in this case, $\overline{\tau}_r*T(q,p)$ and $\tau_r*T(q,p)$ agree. 
\end{example}

\begin{lemma}\label{Lem:OmitR=Q}
In a T-link $T((r_1,r_1s_1), \dots, (r_m,r_ms_m),(p,q))$, we may always assume that the $r_i$ are neither equal to $p$ nor $q$, for all $i=1, \dots, m$.

More precisely, if $1<r_1<\dots<r_n=q < r_{n+1} < \dots < r_m<p$, and $s_1,\dots, s_m>0$, then the T-link
\[
  K_1 =  T((r_1,s_1),\dots,(r_{n-1},s_{n-1}),(q,qs_n),
  (r_{n+1},r_{n+1}s_{n+1}),\dots,(r_m,r_ms_m),(p,q))
\]
is equivalent to the T-link
\[
  K_2 =  T((r_1,s_1),\dots,(r_{n-1},s_{n-1}),
   (r_{n+1},r_{n-1}s_{n-1}),\dots, (p+qs_n, q)).
\]
And if $r_m=p$, the link
\[ T((r_1,s_1), \dots, (r_{n-1},s_{n-1}),(p,ps_m),(p,q))\]
is equivalent to the link
\[ T((r_1,s_1),\dots, (r_{n-1},s_{n-1})(p,ps_m+q)).\]
\end{lemma}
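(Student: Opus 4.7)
I will handle the two cases separately. For the easier case $r_m = p$, both $(p, p s_m) = (\sigma_1 \cdots \sigma_{p-1})^{p s_m}$ and $(p, q) = (\sigma_1 \cdots \sigma_{p-1})^q$ are powers of the same element in $B_p$, so their product is $(\sigma_1 \cdots \sigma_{p-1})^{p s_m + q} = (p, p s_m + q)$, combining directly with the earlier factors to give the claimed T-link. For the main case $r_n = q$, I first use the fact that $(q, q s_n) = \Delta_q^{2 s_n}$ is central in $B_q$, and that each later factor $(r_i, r_i s_i) = \Delta_{r_i}^{2 s_i}$ is central in $B_{r_i}$ with $B_q \subseteq B_{r_i}$ for $i > n$, to commute $(q, q s_n)$ past every other factor so that it sits immediately before $(p, q)$. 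Similarly, $(q, q s_n)$ commutes with each earlier $(r_j, s_j)$ for $j < n$ since those lie in $B_{r_j} \subseteq B_q$.

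I then proceed by induction on $m - n$, the number of full twists between $(q, q s_n)$ and $(p, q)$ after the rearrangement. In the base case $m = n$, the tangle $\tau = (r_1, s_1) \cdots (r_{n-1}, s_{n-1}) \cdot (q, q s_n)$ lives on exactly $q$ strands (since $r_{n-1} < q$), so \reflem{InvertTpq} applies and converts $\tau * T(p, q)$ into $\overline{\tau} * T(q, p)$ on $q$ strands. The rotation preserves the full twist $(q, q s_n)$ by the argument of \reflem{BirmanKofmanSym}, and centrality of $(q, q s_n)$ in $B_q$ allows me to move it adjacent to $(q, p)$, combining the two exponents into $(q, p + q s_n)$. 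Applying \reflem{InvertTpq} a second time to the remaining tangle on $r_{n-1} < q$ strands yields $T((r_1, s_1), \dots, (r_{n-1}, s_{n-1}), (p + q s_n, q))$, which is $K_2$ in the base case.

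The main obstacle is the inductive step: the full tangle $\mu \nu (q, q s_n)$, where $\nu$ collects the intervening full twists, involves $r_m > q$ strands, so \reflem{InvertTpq} does not apply to it directly. My plan is to apply \reflem{InvertTpq} only to the sub-braid $\mu (q, q s_n)(p, q)$, treating $\nu$ as a separate tangle whose position is tracked through the isotopy. The two applications of \reflem{InvertTpq} swap the solid tori bounded by the Heegaard torus twice, returning $\nu$ to the braid axis solid torus of the new $p + q s_n$-strand representation. Since each full twist in $\nu$ commutes with every braid on the first $q$ strands, the isotopy does not mix $\nu$ with the local region being reconfigured, and $\nu$ reattaches to the leftmost $r_m$ of the new strands, yielding $K_2$.
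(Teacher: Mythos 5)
Your treatment of the $r_m=p$ case and of the base case $m=n$ is fine, and the base case is in the same spirit as the paper (the paper phrases the move via the isotopy of Figure~\ref{Fig:TangleSum} rather than two invocations of \reflem{InvertTpq} plus centrality, but the content is the same). The genuine gap is in your ``inductive step,'' which is where the actual content of the lemma lives. \reflem{InvertTpq} is a statement about an entire link of the form $\tau_r * T(p,q)$ with $\tau_r$ on at most $\min\{p,q\}$ strands; it cannot be ``applied to the sub-braid $\mu(q,qs_n)(p,q)$'' while the tangle $\nu$ of full twists on $r_{n+1},\dots,r_m>q$ strands is still present in the closure, because those twists cannot be slid onto the $q$ meridianal strands and the lemma's hypotheses are violated. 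What you would have to prove is that the swap homeomorphism carries $\nu$ into the other solid torus, that the $1/s_n$-twisting of the $q$-strand braid does not disturb it, and that after swapping back $\nu$ re-embeds as the \emph{standard} full twists on the first $r_{n+1},\dots,r_m$ strands of the new braid on $p+qs_n$ strands, with no extra framing or twisting picked up along the way. Your one-line justification --- that each factor of $\nu$ commutes with every braid on the first $q$ strands --- is an algebraic fact in $B_{r_j}$ and does not control what an ambient isotopy of $S^3$ does to the braid box $\nu$; compare \refthm{SatelliteMultTwists}, where an analogous tracking of twist regions genuinely produces an extra twist (the companion entry $(s_m,t_ms_m+1)$), so the concern is not vacuous. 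Moreover, the induction on $m-n$ is never actually used: your ``inductive step'' is a direct argument that never invokes the case $m-n-1$.

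The paper avoids exactly this difficulty by encoding each full twist on $q$ or more strands as $1/s_j$ Dehn filling on an unknot $C_j$ encircling the first $r_j$ strands. These surgery curves are isotopy-invariant data: they are disjoint from the disc bounded by $C_n$ along which the twisting is performed, so the twist does not affect them, and after undoing the swap isotopy they still encircle the first $r_j$ strands of $\tau_r * T(p+qs_n,q)$, whereupon filling recovers the full twists. This is also why the hypothesis that the entries on more than $q$ strands are full twists is essential (the paper remarks the statement can fail otherwise); any correct version of your argument must use that hypothesis in the geometric tracking step, not merely in the preliminary commutation of $(q,qs_n)$ to the bottom of the braid. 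As written, the crucial case $m>n$ is a plan rather than a proof.
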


\begin{proof}
The final statement follows from the straightforward fact that the braid $(p,a)*(p,q)$ equals the braid $(p,a+q)$. Thus we focus on the case that $r_n=q$.
  
Let $\tau_r$ be the braid $(r_1,s_1)*\dots*(r_{n-1},s_{n-1})$ on $r_{n-1}$ strands. Let $C_n, C_{n+1}, \dots, C_m$ be disjoint unknots embedded in the complement of the link $\tau_r*T(p,q)$, encircling the first $r_n, \dots, r_m$ strands of the braid, respectively. Then the T-link $K_1$ is obtained from the link $(\tau_r*T(p,q))\cup C_n \cup \dots \cup C_m$ by performing $1/s_j$ Dehn filling on the link component $C_j$, for $j=n, \dots, m$.

The component $C_n$ encircles $r_n=q$ strands; isotope this to encircle the $q$ overstrands of the tangle $(p,q)$. Now apply the isotopy of \reffig{TangleSum}. This takes $C_n$ to an unknot encircling $q$ strands in a link $\overline{\tau}_r*T(q,p)$. The link components $C_{n+1}, \dots, C_m$ are taken to some link components $\overline{C}_{n+1}, \dots, \overline{C}_m$, but these are all disjoint from the disc bounded by $C_n$, so we ignore them for now. Perform the full twist given by $1/s_n$ Dehn filling on $C_n$, taking the link $\overline{\tau}_r*T(q,p)$ to the link $\overline{\tau}_r*T(q,p+qs_n)$. This does not affect any link components $\overline{C}_{n+1}, \dots, \overline{C}_n$, as they are disjoint from the disc bounded by $C_n$. Now undo the isotopy of \reffig{TangleSum}. The link $\overline{\tau}_r*T(q,p+qs_n) \cup \overline{C}_{n+1} \cup \dots \cup \overline{C}_m$ is taken to the link $\tau_r*T(p+qs_n,q)\cup C_{n+1}\cup \dots \cup C_m$. Perform the $1/s_j$ Dehn filling on $C_j$ for each $j=n+1, \dots, m$ to obtain the result.
\end{proof}

Note in the above proof, we used the hypothesis that in the T-link, there are only full twists on $r_j$ strands for all $r_j>q$: we isotoped a full twist on $q$ strands past these full twists onto the meridianal strands. When there is some $r_j>q$ and the T-link includes a braid $(r_j,s_j)$ for $s_j$ not a multiple of $r_j$, the above proof will not apply and the result is not necessarily true.

\section{Twisting once}\label{Sec:TwistingOnce}

In this section we will characterise when a certain link associated with a T-link of the form $T((r,rs),(p,q))$ is satellite. The work in this section is essentially due to Lee in~\cite{Lee:Cable}. We include this work for two reasons: first to set up notation, and second to present arguments that are easier to state in this simpler case before generalising in the next section. 

Let $1<r <q < p$ be integers, and let $K=\tau_r*T(p,q)$ be any link as in \refdef{TangleSum}. There are two natural surfaces of projection with which to describe $K$, as follows. One is the Heegaard torus on which $T(p,q)$ is projected; call this $F$. The second is the sphere $P$ forming the usual plane of projection of the braid. This sphere separates $S^3$ into two balls, one above the sphere and one below. Call the ball below the plane of projection $B$. Isotope the tangle $\tau_r$ to lie in the $q$ strands running in a meridianal direction over the $p$ strands of the braid, so that $B\cap F$ is an annulus meeting no crossings of the diagram. This annulus is shown for an example in \reffig{Annulus}.

\begin{figure}
  \centering
\begingroup%
  \makeatletter%
  \providecommand\color[2][]{%
    \errmessage{(Inkscape) Color is used for the text in Inkscape, but the package 'color.sty' is not loaded}%
    \renewcommand\color[2][]{}%
  }%
  \providecommand\transparent[1]{%
    \errmessage{(Inkscape) Transparency is used (non-zero) for the text in Inkscape, but the package 'transparent.sty' is not loaded}%
    \renewcommand\transparent[1]{}%
  }%
  \providecommand\rotatebox[2]{#2}%
  \newcommand*\fsize{\dimexpr\f@size pt\relax}%
  \newcommand*\lineheight[1]{\fontsize{\fsize}{#1\fsize}\selectfont}%
  \ifx\svgwidth\undefined%
    \setlength{\unitlength}{280.81863213bp}%
    \ifx\svgscale\undefined%
      \relax%
    \else%
      \setlength{\unitlength}{\unitlength * \real{\svgscale}}%
    \fi%
  \else%
    \setlength{\unitlength}{\svgwidth}%
  \fi%
  \global\let\svgwidth\undefined%
  \global\let\svgscale\undefined%
  \makeatother%
  \begin{picture}(1,0.43619523)%
    \lineheight{1}%
    \setlength\tabcolsep{0pt}%
    \put(0,0){\includegraphics[width=\unitlength,page=1]{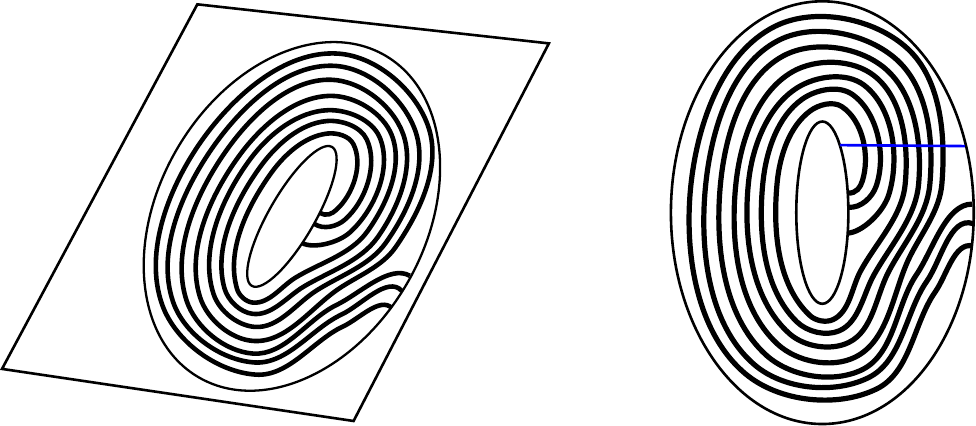}}%
    \put(0.45610326,0.09791326){\color[rgb]{0,0,0}\makebox(0,0)[lt]{\lineheight{1.25}\smash{\begin{tabular}[t]{l}$B$\end{tabular}}}}%
    \put(0,0){\includegraphics[width=\unitlength,page=2]{Annulus.pdf}}%
    \put(0.46945709,0.35354345){\color[rgb]{0,0,0}\makebox(0,0)[lt]{\lineheight{1.25}\smash{\begin{tabular}[t]{l}$P$\end{tabular}}}}%
    \put(0.04118119,0.10268255){\color[rgb]{0,0,0}\makebox(0,0)[lt]{\lineheight{1.25}\smash{\begin{tabular}[t]{l}$P\cap F$\end{tabular}}}}%
    \put(0,0){\includegraphics[width=\unitlength,page=3]{Annulus.pdf}}%
  \end{picture}%
\endgroup%

  \caption{Left: The link $\tau_r*T(p,q)$ shown lying above the projection plane $P$, so that $B\cap F$ is an annulus meeting no crossings on $F$. Right: The link meets $F\cap B$ in a collection of arcs disjoint from $\tau_r$. Shown is the case $p=7, q=3$. This is the ``back'' of the torus $F$, viewing the link with our heads on the outside of $B$ (from the front). The horizontal blue arc meets the link exactly $p$ times.}
  \label{Fig:Annulus}
\end{figure}

The intersection of $K$ with the annulus $B\cap F$ is a 1-manifold consisting of $q$ arcs with endpoints on either side of the annulus, wrapping around the core of the annulus in such a way that a trivial essential arc from one side of the annulus to the other, with endpoints just above the braid, meets exactly $p$ arcs. See \reffig{Annulus}.

\begin{definition}\label{Def:AugComponent}
Let $1<r<q<p$, and let $K=\tau_r*T(p,q)$ be as above. 
Let $a,b$ be integers with $0\leq a,b \leq p$ and $a+b<p$, and let $J_{a,b}$ be an unknot bounding a disc such that the interior of that disc meets $F$ transversely in a single arc intersecting exactly $b$ strands. Position $J_{a,b}$ to lie above the tangle $\tau_r$ and braid $(p,q)$ in the braid, so that it meets the annulus $F\cap B$ in exactly two points. Arrange these two points to lie on the blue horizontal arc shown in \reffig{Annulus}, right, so that the one on the left has exactly $a$ intersection points of $K$ to its left, and between $a$ and $b$ lie exactly $b$ intersection points. 

More generally, given $a_1, b_1, \dots, a_n, b_n$ satisfying $0\leq a_i, b_i<p$ and $a_i+b_i\leq p$ for all $i$, we may take unknots $J_{a_{1},b_{1}}, \dots, J_{a_n,b_n}$ as above, chosen so that the $(i+1)$-th is pushed slightly above the $i$-th, so that all are disjoint. See \reffig{UnlinksJ}.
\end{definition}

\begin{figure}
  \centering
  \includegraphics{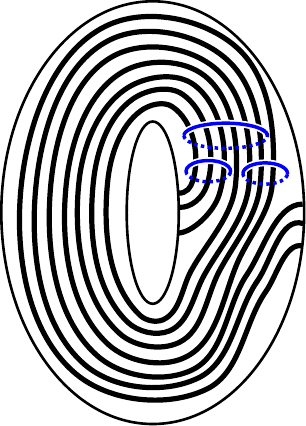}
  \caption{Shown are links $J_{0,6}$, $J_{0,3}$, and $J_{4,3}$}
  \label{Fig:UnlinksJ}
\end{figure}

\begin{proposition}\label{Prop:OneComponentJ}
  Let $1<r<q < p$ be integers. Let $b$ be an integer with $0<b<p$, and $\tau_r$ a tangle on $r$ strands. Let $K=\tau_r*T(p,q)$ and let $J_{0,b}$ be as in \refdef{AugComponent}. Suppose $b=sq$ for some integer $s>1$. Then $J_{0,b}$ may be isotoped in the complement of $K$ to be disjoint from $F$. The link $K\cup J_{0,b}$ is a satellite link, with an essential torus $T$ parallel to $F$, obtained by pushing $F$ slightly off of $K$. Within the solid torus bounded by $T$, the link component $J_{0,b}$ forms the torus knot $T(s,1)$. On the opposite side of $T$, the link $K$ has the form of a link $\overline{\tau}_r*T(q,p)$ within a solid torus.
\end{proposition}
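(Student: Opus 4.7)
The plan is to prove the four assertions in order: the isotopy of $J_{0,b}$ off $F$, the essentiality of the parallel torus $T$, the form of $J_{0,b}$ inside its solid torus, and the form of $K$ in the complementary solid torus. I would start with a linking-number calculation that both guides the isotopy and identifies the torus knot type. Since $J_{0,b}$ encircles $b = sq$ braid strands, $\mathrm{lk}(J_{0,b}, K) = sq$. Writing $[T(p,q)] = p\ell + qm$ in $H_1(F)$ with $\ell, m$ the longitude and meridian of the solid torus $V_{\mathrm{in}}$ bounded by $F$ whose core is the braid axis $C_{\mathrm{in}}$, and using that the small unknot $J_{0,b}$ sits on one side of the braid axis so that $\mathrm{lk}(J_{0,b}, C_{\mathrm{in}}) = 0$, one reads off $\mathrm{lk}(J_{0,b}, C_{\mathrm{out}}) = s$ from the decomposition $b = p\cdot 0 + q\cdot s$.

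The main technical step is constructing the isotopy. The disc $D$ bounded by $J_{0,b}$ meets $F$ in an arc $\alpha$ along the blue horizontal arc of $F\cap B$, crossing $b = sq$ strands of $K$ and with endpoints in $F\setminus K$. The hope is to find an arc $\alpha'$ in $F\setminus K$ with the same endpoints as $\alpha$, winding around $F$ in a way dictated by the hypothesis $b = sq$: each meridianal traversal of $F$ meets $K$ in exactly $q$ points, so $s$ meridianal wraps can be ``traded for'' the $sq$ transverse intersections of $\alpha$ with $K$. The closed curve $\alpha \cup \overline{\alpha'}$ on $F$ should then bound a disc $D^*$ in the appropriate solid torus; concatenating $D^*$ with the half-disc of $D$ inside $V_{\mathrm{in}}$ gives a disc bounded by an arc of $J_{0,b}$ together with $\alpha'$, which provides the track of an ambient isotopy of $J_{0,b}$ in $S^3\setminus K$ onto a curve lying in $V_{\mathrm{in}}$ and disjoint from $F$.

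Once $J_{0,b}\subset V_{\mathrm{in}}$, its winding around $C_{\mathrm{in}}$ equals $\mathrm{lk}(J_{0,b}, C_{\mathrm{out}}) = s$ by the linking calculation. Since $J_{0,b}$ is an unknot in $S^3$ wrapping $s>1$ times around the unknotted core of a standardly embedded solid torus, it must be isotopic inside $V_{\mathrm{in}}$ to a $(s,1)$-torus knot on a torus parallel to $F$, namely $T(s,1)$. For the essential torus, push $F$ slightly off $K$ toward the $V_{\mathrm{out}}$ side to obtain $T$, which separates $J_{0,b}$ (on the $V_{\mathrm{in}}$ side) from $K$ (on the $V_{\mathrm{out}}$ side). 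Incompressibility follows because $J_{0,b}$ meets every meridianal disc of $V_{\mathrm{in}}$ in $s\geq 2$ points and $K$ meets every meridianal disc of $V_{\mathrm{out}}$ in at least $q\geq 2$ points; non-boundary-parallelism follows because $J_{0,b}$ is a $T(s,1)$ with $s>1$ (not the core of $V_{\mathrm{in}}$), and $K$ is not a core of $V_{\mathrm{out}}$ owing to its nontrivial $(p,q)$-wrapping. Finally, the form of $K$ inside the solid torus on the $V_{\mathrm{out}}$ side is $\overline{\tau}_r*T(q,p)$ by \reflem{InvertTpq}, which gives precisely the description of $\tau_r*T(p,q)$ from the perspective of the opposite solid torus bounded by $F$.

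The main obstacle is the rigorous construction of the ambient isotopy pushing $J_{0,b}$ off $F$ in $S^3\setminus K$. One must identify the arc $\alpha'$ in $F\setminus K$ with precisely the right homotopy class rel endpoints, accounting correctly for the tangle $\tau_r$ above the projection plane, verify that the disc $D^*$ bounded by $\alpha\cup\overline{\alpha'}$ in the appropriate solid torus is disjoint from the remainder of $K$, and confirm that the induced isotopy stays in $S^3\setminus K$ and places $J_{0,b}$ on the claimed side of $F$. This is essentially Lee's construction from \cite{Lee:Cable} in this simpler single-twist setting, and is the reason the section is organised so that this proof precedes the generalisation to multiple twists in \refsec{TwistingMult}.
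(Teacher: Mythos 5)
Your outline of the essentiality argument and the identification of the far side as $\overline{\tau}_r*T(q,p)$ via \reflem{InvertTpq} agree with the paper, but the heart of the proposition --- actually producing the isotopy of $J_{0,b}$ off $F$ and identifying its image as $T(s,1)$ --- is exactly where your proposal has a genuine gap, and the mechanism you sketch would fail as stated. The disc $D$ bounded by $J_{0,b}$ meets $K$ in $b=sq$ points, and since $K$ lies on $F$ away from the tangle while $D\cap F=\alpha$, those punctures lie on $\alpha$. Hence the surface $D^*\cup D_1$ you propose as an isotopy track contains $\alpha$, punctures and all, in its interior: sweeping the arc of $J_{0,b}$ across it pushes $J_{0,b}$ through $K$ exactly $sq$ times, so this is not an isotopy in $S^3-K$. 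Moreover, the disc $D^*$ itself need not exist: for the ``trade'' of $sq$ transverse intersections for $s$ meridianal wraps, the closed curve $\alpha\cup\overline{\alpha'}$ must represent $s$ times a meridian class of the relevant solid torus, and for $s>1$ a non-primitive nontrivial class has no embedded representative on $F$ (and in the other solid torus the curve is not null-homotopic at all). The paper's proof avoids any single track disc: working inside the ball $B$, it slides an arc of $J_{0,b}$ around the core of the annulus $F\cap B$ parallel to the strands of $K$, and after each full pass pulls a loop tight, so that the remaining half-circle bounds exactly $q$ fewer strands; the hypothesis $b=sq$ guarantees that after $s$ passes nothing remains, and the curve then visibly lies on $F$ as a $(s,1)$ curve, which is pushed off $F$.

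Your fallback for the $T(s,1)$ identification is also invalid: it is not true that an unknot of $S^3$ contained in an unknotted solid torus with winding number $s>1$ must be isotopic in that solid torus to the $(s,1)$ torus knot. Winding number does not determine the isotopy class of a pattern; for instance there are hyperbolic two-component links whose components are both unknots with linking number $s$, and the resulting pattern in the solid torus complement of one component has winding number $s$ but is not a torus knot in that solid torus. So the conclusion that $J_{0,b}$ becomes $T(s,1)$ must come out of the explicit isotopy itself, as it does in the paper, rather than from a linking or winding count. The remaining parts of your argument --- incompressibility from wrapping numbers on both sides, non-boundary-parallelism because neither $J_{0,b}$ nor $K$ is a core, and the appeal to \reflem{InvertTpq} for the form of $K$ --- are fine and essentially the paper's.
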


\begin{proof}
The isotopy of $J_{0,b}$ to be disjoint $F$ occurs completely within the ball $B$ of \reffig{Annulus}. That is, we isotope $J_{0,b}$ only within the ball that lies below the plane of projection in \reffig{Annulus}. To make this easier to visualise, in \reffig{OneComponentJ} we have flipped the projection plane $P$ over, and we are looking from inside $B$. Observe that the portion of the diagram on $P\cap B$ in \reffig{OneComponentJ} has been rotated $180^\circ$ from \reffig{Annulus}; this is to indicate the change of position of our heads.
  
Within the ball $B$, $J_{0,b}\cap B$ forms a half circle with one endpoint just to the right of the inner boundary component of the annulus $F\cap B$, bounding $b=sq$ strands of $K$ on $F\cap B$, with the other endpoint between the $sq$-th and $(sq+1)$-th strands. See \reffig{OneComponentJ}, left.

\begin{figure}
  \centering
  \includegraphics{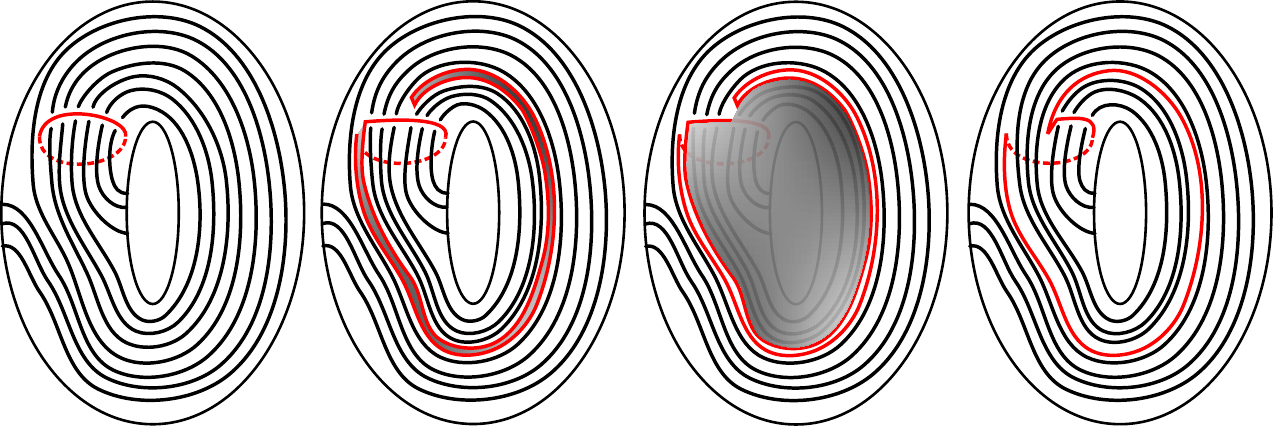}
  \caption{On the left, the arc of $J_{0,b}$ lying below $F\cap B$ is shown (with our head in the ball $B$). Isotope $J_{0,b}$, keeping all of $J_{0,b}$ fixed except a small arc, sliding this arc parallel to the annulus $F-K$ as shown in the next frame. After the arc has been pulled once around the core curve of $F\cap B$, a portion of the arc bounds a disc in $B$; this may be pulled tight, undoing a loop, as shown on the right. Then repeat.}
  \label{Fig:OneComponentJ}
\end{figure}

Keeping these two endpoints of $J_{0,b}\cap B$ fixed, isotope a small arc of $J_{0,b}$ through $B$. Take this arc to lie just above the intersection point of $J_{0,b}$ with $F$ lying between the $sq$-th and $(sq+1)$-th strands of $K$. Now isotope the arc, laying down arc in the annulus $F-K$ following the strands of $K$, with a portion of arc above the annulus, running parallel to the annulus and the arc. See \reffig{OneComponentJ}, middle.

After pulling this arc once around the core curve of $F\cap B$, a portion of the arc forms a loop bounding a disk with an arc running over $q$ strands. Undo this loop, as shown in the right of \reffig{OneComponentJ}. The arc of $J_{0,b}$ now consists of one arc embedded in $F-K$, running parallel to $K$ once around the core curve of $F\cap B$, and an arc in the interior of $B$ connecting the endpoint of the arc on $F-K$ to the original intersection point of $J_{0,b}$ on the inside of $F\cap B$. This arc in the interior of $B$ now forms a half circle bounding $(s-1)q$ strands. 

Repeat this process. Each time we isotope around $F\cap B$, we adjust the arc of $J_{0,b}$ in the interior of $B$ to be a half circle bounding $q$ fewer strands. After $s$ times around, the arc forms a $T(s,1)$ curve on the annulus $F\cap B$ disjoint from $K$, and the half circle in the interior of $B$ bounds no strands, hence can be isotoped into $F-K$.

Now push the curve $T(s,1)$ slightly off of $B$. The result is disjoint from $B$, and disjoint from $F$. Consider a torus $T$ slightly between $T(s,1)$ and $K$, parallel to $F$. To one side it bounds a solid torus $W$ containing the torus knot $T(s,1)$ isotopic to $J_{0,b}$, to the other it bounds a solid torus $V$ containing $K$. The solid torus $V$ has linking number $s$ with $T(s,1)$, hence $T$ is essential on the side containing $T(s,1)$. When viewed from the other side, the solid torus $W$ has linking number $q$ with $K$, which has the form $\overline{\tau}_r*T(q,p)$ within $V$, as in \reflem{InvertTpq}. Hence it can be neither compressible nor boundary parallel.
\end{proof}

\begin{figure}
  \centering
  \includegraphics{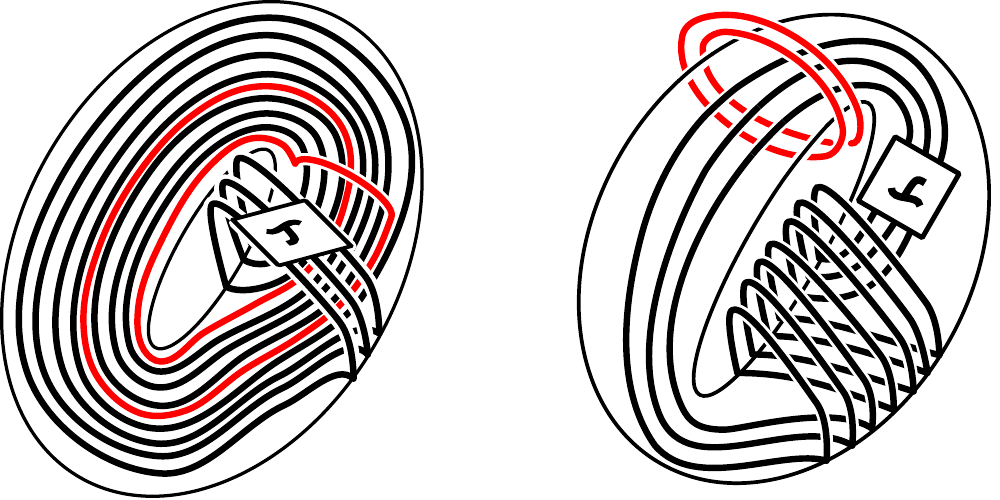}
  \caption{Proposition~\ref{Prop:OneComponentJ} is illustrated for $\tau*T(7,3)\cup J_{0,6}$ (left), which equals $\overline{\tau}*T(3,7)\cup J_{0,6}$ (right). The component $J_{0,6}$ is shown in red in each diagram. The essential torus $T$ is parallel to a Heegaard torus for $S^3$, and separates the two link components.}
  \label{Fig:EssentialTorus}
\end{figure}

Proposition~\ref{Prop:OneComponentJ} is illustrated in \reffig{EssentialTorus}. This shows two views of the identical links $\tau*T(7,3) \cup J_{0,6}$ and $\overline{\tau}*T(3,7)\cup J_{0,6}$. The link component $J_{0,6}$ has been isotoped to form a $(2,1)$-torus link on the left. The essential torus $T$ is parallel to a Heegaard torus and separates $\tau*T(7,3)$ from $T(2,1)$. On the right is the view with our heads moved to the other side of the torus knot. The component $\tau*T(7,3)$ has become $\overline{\tau}*T(3,7)$, and the component $J_{0,6}$ has become a $T(1,2)$ torus link. Again the essential torus is parallel to a Heegaard torus for $S^3$, separating the two link components.

\begin{theorem}\label{Thm:LeeWithConverse}
Let $p,q$ be relatively prime integers, with $1<q<p$, and let $a$ be an integer such that $1<a<p$. The link $T(p,q)\cup J_{0,a}$ is satellite if and only if $a=sq$ is a multiple of $q$. If $a=sq$ is a multiple of $q$, $J_{0,a}$ is isotopic to a knot $T(s,1)$ disjoint from the Heegaard torus $F$ on which $T(p,q)$ is projected, and there is an essential torus obtained by isotoping $F$ slightly off of $T(p,q)$. 
\end{theorem}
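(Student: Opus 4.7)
The ``if'' direction follows immediately from \refprop{OneComponentJ} applied with $\tau_r$ the trivial tangle (for any $1<r<q$) and $b=a=sq$: the proposition isotopes $J_{0,a}$ onto a $T(s,1)$ curve on the Heegaard torus $F$, disjoint from $T(p,q)$, and produces an essential torus obtained by pushing $F$ slightly off $T(p,q)$. This delivers both the satellite conclusion and the supplementary structural statement in the second sentence of the theorem.

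The content of the theorem is the ``only if'' direction: if $T(p,q)\cup J_{0,a}$ is satellite then $q\mid a$. I would argue by contrapositive. Assume $q\nmid a$ and suppose for contradiction that there is an essential torus $T$ in the link complement. The key geometric object is the obvious embedded disc $D$ bounded by $J_{0,a}$; by construction $D$ meets $F$ in a single arc crossing exactly $a$ of the $q$ meridianal arcs of $T(p,q)\cap(F\cap B)$, so that $D\cap T(p,q)$ is $a$ transverse points. Put $T$ in general position with $D$ and with $F$, and minimise $|T\cap D|+|T\cap F|$ over isotopies of $T$ in the link exterior. Standard innermost-disc and outermost-arc arguments (together with the essentialness of $T$) show that $T\cap D$ is a disjoint union of arcs essential in the $a$-punctured disc $D\setminus T(p,q)$ together with simple closed curves essential in $T\setminus J_{0,a}$, and similarly that $T\cap F$ is essential in $F\setminus T(p,q)$.

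The case analysis then runs as follows. An essential arc of $T\cap D$ splits the $a$ punctures of $D$ into two nonempty blocks of consecutive strands of $T(p,q)\cap(F\cap B)$. Capping off such an arc along the arcs of $T\cap F$ exhibits a sub-annulus (or disc) of $F\setminus T(p,q)$ that separates a block of consecutive meridianal strands of $T(p,q)$ on $F$. Because $T(p,q)$ is a $(p,q)$-curve on $F$ with $\gcd(p,q)=1$, a sub-annulus of $F$ disjoint from $T(p,q)$ whose boundary caps off along a single arc of $J_{0,a}\cap F$ must contain a multiple-of-$q$ block of meridianal strands, exactly as in the isotopy used in the proof of \refprop{OneComponentJ}. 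Since $0<a<p$ and $q\nmid a$, no such block decomposition can accommodate the arcs of $T\cap D$, giving the desired contradiction. The case where $T\cap D$ has no arcs (only closed curves) is handled by pushing the innermost such curve off a trivially compressible sub-disc, reducing the intersection and contradicting minimality.

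I expect the combinatorial step forcing the ``multiple-of-$q$'' conclusion on the block of strands to be the main obstacle. A cleaner shortcut would be to invoke Dehn filling: for all but finitely many $n$, the $1/n$-surgery on $J_{0,a}$ yields a twisted torus knot $T(p,q;a,n)$ whose exterior inherits an essential torus from $T$, and Lee's classification of satellite twisted torus knots in~\cite{Lee:2018Satellite} then forces $q\mid a$ directly. Either approach completes the proof.
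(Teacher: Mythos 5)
The ``if'' direction is fine and coincides with the paper's: both reduce to \refprop{OneComponentJ} (the paper applies it directly; your choice of a trivial tangle is the same reduction). The problem is the ``only if'' direction, which is the actual content of the theorem, and there you do not have a proof. Your primary argument is a plan whose decisive step is exactly the point you flag yourself: nothing in the sketch explains why the configuration of arcs of $T\cap D$ and curves of $T\cap F$ must organise the strands of $T(p,q)$ into blocks whose sizes are multiples of $q$. A priori the curves of $T\cap F$ need not be standard curves on $F$, the torus $T$ need not be positioned nicely with respect to the Heegaard splitting, and the ``capping off along arcs of $T\cap F$'' move is not justified; this combinatorial core is precisely what a genuine proof would have to supply.

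The Dehn-filling shortcut also has a real gap. You assert that for all but finitely many $n$ the $1/n$-filling on $J_{0,a}$ ``inherits an essential torus from $T$'', but an incompressible torus can compress or become boundary parallel after filling; to exclude this for infinitely many slopes you would need, for instance, Wu-type results on surfaces surviving surgery together with an argument ruling out an essential annulus from $T$ to $\bdy N(J_{0,a})$ and ruling out boundary-parallelism in the filled knot exterior --- none of which is supplied, and which is itself a nontrivial piece of 3-manifold topology. Note also that the statement of Lee quoted in this paper (\refthm{Lee}, from \cite{Lee:2018Satellite}) only covers $1<a<q$, so for $q<a<p$ you would need the full satellite classification from that paper. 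The paper's own proof sidesteps all of this by citing a different result of Lee \cite[Proposition~5.7]{Lee:Unknotted}: when $a$ is not a multiple of $q$, the augmented link $T(p,q)\cup J_{0,a}$ itself is hyperbolic, hence atoroidal and not satellite. You should either invoke that result or fully carry out one of your two sketches; as written, neither is complete.
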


\begin{proof}
When $a=sq$, the fact that the link is satellite with essential torus as described follows immediately from \refprop{OneComponentJ}. 

When $a\neq sq$, then $T(p,q)\cup J_{0,a}$ is hyperbolic by work of Lee~\cite[Proposition~5.7]{Lee:Unknotted}, thus atoroidal. 
\end{proof}

\section{Twisting multiple times}\label{Sec:TwistingMult}

In this section, we extend \refprop{OneComponentJ} to multiple components, isotoping them simultaneously to be disjoint from
the projection torus of $T(p,q)$ under certain conditions.

\begin{lemma}\label{Lem:MultipleJ}
Let $1<r<q<p$ be integers, $\tau_r$ a tangle on $r$ strands, and $K=\tau_r*T(p,q)$ a link as in \refdef{TangleSum}. 
Let $(a_1,b_1), \dots, (a_n,b_n)$ be pairs of integers such that each $a_i$ is a multiple of $q$ (possibly $a_i=0$), and each $b_i$ is a non-zero multiple of $q$, and such that $a_i+b_i<p$, for $i=1, \dots, n$. Let $J_{a_i,b_i}$ be an unknotted link component as in \refdef{AugComponent}, so each $J_{a_i,b_i}$ encircles a multiple of $q$ strands, and has a multiple of $q$ strands lying to the inside of its innermost endpoint on $F\cap B$. Then:
\begin{enumerate}
\item All link components $J_{a_i,b_i}$ may be isotoped simultaneously in the complement of $K$ to be disjoint from the projection torus $F$ of $T(p,q)$.
\item The link $K\cup\left(\bigcup_{i=1}^n J_{a_i,b_i}\right)$ is a satellite link, with an essential torus $T$ parallel to $F$.
\item The torus $T$ bounds a solid torus on one side containing all link components $J_{a_i,b_i}$, all forming torus unknots of the form $T(s_i,1)$ where $b_i=s_iq$. On the other side, it bounds a solid torus containing $K$, having the form $\overline{\tau}_r*T(q,p)$.
\end{enumerate}
\end{lemma}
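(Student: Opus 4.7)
The plan is to iterate the isotopy argument of \refprop{OneComponentJ} once for each $J_{a_i, b_i}$, then verify that the resulting link has the claimed satellite structure. The stacking of the components together with the hypothesis that both $a_i$ and $b_i$ are multiples of $q$ are the two ingredients that make the iteration work.

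For each fixed $i$, I would apply the sequence of moves from the proof of \refprop{OneComponentJ} to $J_{a_i, b_i}$. Its disc meets $F$ in an arc running across $b_i = s_i q$ strands of $K$. Sliding a small arc of $J_{a_i, b_i}$ in $B$ once around the core of $F \cap B$ and then undoing the resulting loop bounding $q$ strands shortens the half-circle in $B$ to one bounding $(s_i-1)q$ strands. Iterating $s_i$ times produces a half-circle bounding zero strands that can be pushed into $F - K$, leaving $J_{a_i, b_i}$ as a $T(s_i, 1)$ curve on $F$. The hypothesis that $a_i = s_i' q$ is also a multiple of $q$ is needed to guarantee that the left endpoint of the arc sits at a position where the local strand pattern aligns with the one at the moved right endpoint, so that the final pushed-off curve is a well-formed $T(s_i, 1)$ disjoint from $K$.

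Next, I would arrange for the $n$ separate isotopies not to interfere with each other. By \refdef{AugComponent} (cf.\ \reffig{UnlinksJ}), the $J_{a_i, b_i}$ are already stacked so that $J_{a_{i+1}, b_{i+1}}$ lies slightly above $J_{a_i, b_i}$, and each single-component isotopy can be chosen to take place in an arbitrarily thin vertical neighbourhood of the corresponding $J_{a_i, b_i}$. Performing the $n$ isotopies sequentially from outermost to innermost, each within its own vertical slab, produces no new intersections among the $J$'s or with $K$. After all $n$ iterations, each $J_{a_i, b_i}$ is a $T(s_i, 1)$ curve on $F - K$.

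Finally, I would push all the $T(s_i, 1)$ curves slightly off $F$ to the side opposite $K$, producing a torus $T$ parallel to $F$ that separates them from $K$. On the side $V$ containing $K$, \reflem{InvertTpq} identifies $K$ as $\overline{\tau}_r * T(q, p)$, which wraps $q > 1$ times around the core of $V$, so any compressing disc for $T$ in $V$ must meet $K$. On the side $W$ containing the $J_{a_i, b_i}$, any candidate compressing disc for $T$ is isotopic to a meridional disc of $W$, and each $T(s_i, 1)$ meets such a disc in $s_i \geq 1$ points, so no compressing disc can be disjoint from all the $J$'s. Hence $T$ is incompressible on both sides and, since neither piece is a product $T^2 \times I$, it is not boundary parallel, so it is essential. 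The main obstacle will be the careful bookkeeping in the first step: verifying that the conditions $a_i = s_i' q$ and $b_i = s_i q$ together are the precise hypotheses needed to close up each iterated isotopy cleanly on $F$ without introducing spurious crossings with $K$ or with the already-isotoped components.
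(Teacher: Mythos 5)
There is a genuine gap, and it is exactly at the point you flag as ``the main obstacle'': the claim that the $n$ isotopies can be performed sequentially, each ``within its own vertical slab,'' producing ``no new intersections among the $J$'s.'' First, the isotopy of a single $J_{a_i,b_i}$ is not supported near that component: it drags an arc all the way around the core of the annulus $F\cap B$ (around the entire closed braid) $s_i$ times, so it necessarily travels through the region occupied by the half-circles and laid-down arcs of the other components; there is no thin neighbourhood of $J_{a_i,b_i}$ containing it. Second, and more seriously, the final configuration you describe --- each $J_{a_i,b_i}$ a clean $T(s_i,1)$ curve on a parallel copy of $F$, with no crossings among the different components --- is impossible for any isotopy of the link. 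The components $J_{a_i,b_i}$ have pairwise linking number zero, which is preserved by isotopy, whereas two curves of type $T(s_i,1)$ and $T(s_j,1)$ sitting on nested tori parallel to $F$ with no compensating crossings have linking number $s_is_j\neq 0$.

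The missing idea is the one the paper's proof supplies: the components must be isotoped simultaneously, from outermost to innermost, and each time a loop is pulled tight in the presence of other components one must insert a full twist among all loops involved (\reffig{MultipleJ}). These full twists are precisely the compensating crossings that keep the pairwise linking numbers zero, and they are not an optional bit of bookkeeping: the later argument (\reflem{InvertC}, used in \refthm{SatelliteMultTwists}) unwinds exactly this twisted configuration, so recording it is part of the content of the lemma. Your single-component step and your essentiality argument for $T$ (meridian discs of each solid torus are met by $K$, respectively by the $J_{a_i,b_i}$, because of nonzero winding numbers) are fine and close to the paper's, but the multi-component interaction has to be confronted rather than localised away.
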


\begin{proof}
The proof is nearly identical to that of \refprop{OneComponentJ}, only we keep track of multiple link components simultaneously. Again the isotopy happens only in the ball $B$ below the plane of projection, where the unknots $J_{a_i,b_i}$ form half circles, each encircling a multiple of $q$ strands. Starting with the region of $F\cap B-K$ containing the outermost endpoint(s) of the half circles, isotope all half circles meeting that region to lay down an arc on $F-K$, sliding parallel to $F-K$, keeping the largest circles on the outside. See \reffig{MultipleJ}, left two panels. (Again in \reffig{MultipleJ} we have rotated the picture to put our heads inside the ball $B$.)

\begin{figure}
  \centering
  \includegraphics{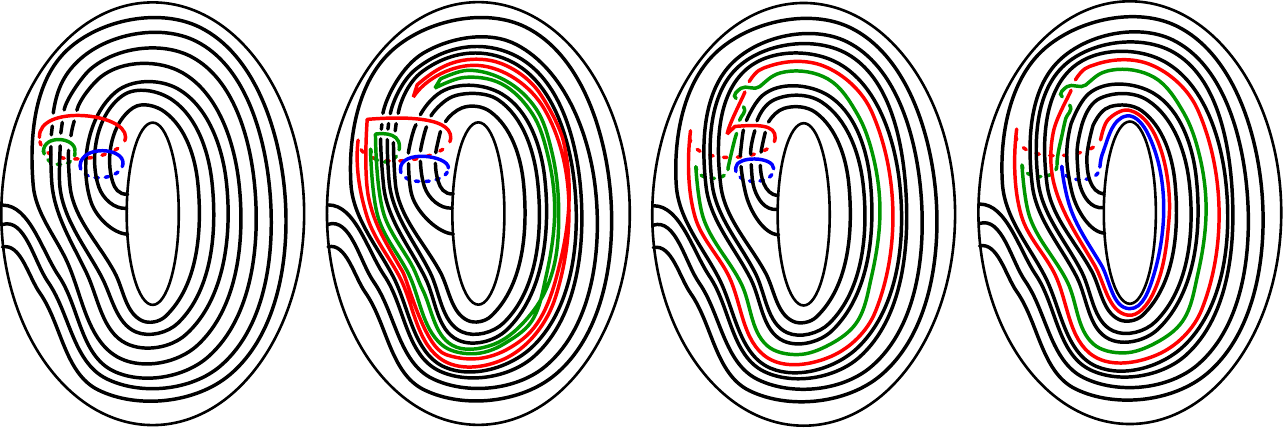}
  \caption{When there are multiple link components, as shown on the left, starting with the outermost points of intersection, isotope just as in \reffig{OneComponentJ}. After one pass around the core of $F\cap B$, when pulling loops tight they must form a full twist, as shown second to right. The right panel shows the result of isotoping all components for this example.}
  \label{Fig:MultipleJ}
\end{figure}

After one pass around the annulus $F\cap B-K$, a portion of the arcs that have been pulled around the annulus now form loops. These may be `unlooped,' provided we add a full twist to all loops involved as on the second to right of \reffig{MultipleJ}. This replaces the half circle of such $J_{a_i,b_i}$ with an arc on $F\cap B-K$, embedded aside from where it forms crossings coming from full twists about other $J_{a_j,b_j}$, and a half circle bounding $q$ fewer strands of $K$. Observe that adding the full twist preserves the fact that the $J_{a_j,b_j}$ have linking number zero with each other.

Repeat, moving from the outside to the inside, isotoping all components $J_{a_i,b_i}$ to intersect $B$ only in the neighbourhood of the annulus $F\cap B-K$ to the outside, forming a half-circle bounding $q$ fewer strands of $K$ after each pass. Because the $a_i$ and $b_i$ are all multiples of $q$, for each $J_{a_i,b_i}$ the portion of the half circle obtained above eventually bounds zero strands, and can be isotoped into $F$.

As before, after isotopy the $J_{a_i,b_i}$ form knots $T(s_i,1)$. When all are embedded on $F$, we may push past $F$ slightly to obtain the result.
\end{proof}

\begin{lemma}\label{Lem:InvertC}
  Let $J_1 = T(s_1,1), \dots, J_k= T(s_k,1)$ be torus (un)knots lying in an unknotted solid torus in $S^3$ obtained from \reflem{MultipleJ}, where $J_i=J_{a_i,b_i}$ with $a_1\leq a_i$ and $b_i\leq b_1$ for all $i=2, \dots, k$.  Let $C$ be the core of the unknotted solid torus in $S^3$ such that $S^3-C$ is the solid torus containing the $J_i$. Then there is an ambient isotopy of $S^3$ that takes $C$ to a torus knot $T(s_1,1)$, and takes each $J_i$ to an unknot bounding a disc meeting $T(s_1,1)=C$ in $s_i$ points, for $i=1, \dots, k$. Moreover, the discs bounded by the $J_i$ are mutually disjoint.
\end{lemma}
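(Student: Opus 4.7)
The plan is to construct the ambient isotopy as one supported entirely in the interior of the solid torus $W = S^3 \setminus V$, transforming $C$ (the core of $W$) into a $T(s_1,1)$-torus knot $C'$ while leaving each $J_i \subset V$ pointwise fixed. The key observation is that the core of a solid torus is freely homotopic to any $(s_1,1)$-torus curve on a parallel interior torus, and this homotopy can be realized by a compactly supported diffeomorphism, hence by an ambient isotopy.

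Concretely, identify $W \cong D^2 \times S^1$ with coordinates $(u,\theta)$ so that $C = \{0\} \times S^1$. Fix $0 < r_0 < r_1 < 1$, a bump function $h\colon [0,1] \to [0,1]$ with $h \equiv 1$ on $[0,r_0]$ and $h \equiv 0$ on $[r_1,1]$, and a nonzero offset $\delta$. For $\epsilon \in [0, r_0]$, define
\[
  \Psi_\epsilon(u,\theta) \;=\; \bigl(u + h(|u|)\,\epsilon\, e^{i(s_1\theta + \delta)},\ \theta\bigr).
\]
For small $\epsilon$ the Jacobian is close to the identity, so $\Psi_\epsilon$ is a diffeomorphism of $W$; and it equals the identity near $\partial W = F$, hence extends by the identity on $V$ to an ambient isotopy of $S^3$. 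At $\epsilon = r_0$ the core is carried to the curve $\theta \mapsto (r_0\, e^{i(s_1\theta + \delta)}, \theta)$, tracing a $(s_1,1)$-curve on the torus $\{|u| = r_0\} \subset \mathrm{int}(W)$, i.e. a $T(s_1,1)$-torus knot $C'$. Each $J_i \subset V$ is unchanged.

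Since linking number is isotopy invariant, $\mathrm{lk}(J_i, C') = \mathrm{lk}(J_i, C) = s_i$, as is verified by intersecting the $(s_i,1)$-torus curve $J_i$ on $F_i$ with a meridianal disc of $V$ (bounded by $\mu_V$) to obtain $s_i$ algebraic intersection points. As each $J_i$ is an unknot, it bounds an embedded disc $D_i$ meeting $C'$ algebraically in $s_i$ points; a standard minimization rel boundary then gives exactly $s_i$ geometric intersections.

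To arrange the $D_i$ pairwise disjoint, we exploit the provenance of the $J_i$'s: by \reflem{MultipleJ} they arose as disjoint encircling unknots that bounded mutually disjoint ``augmenting'' discs, each meeting the strands of the braid $K$ in $s_i q$ points. Tracing those augmenting discs backwards through the isotopy of \reflem{MultipleJ} and then forwards through $\Psi_{r_0}$ (which is supported in the interior of $W$, away from the $J_i$'s) yields a family of disjoint Seifert discs for the $J_i$'s. The main obstacle is the last step: verifying that the transported discs meet the new $C' \subset \mathrm{int}(W)$ in exactly $s_i$ geometric points for each $i$. This is a careful bookkeeping of how the $s_i q$ original strand-intersections of each augmenting disc with $K$ are redistributed under the inverse isotopy of \reflem{MultipleJ} (where the $K$-strands were wound around the longitudinal direction of $F$) and are picked up by $C'$ after it is wound around the meridianal direction by $\Psi_{r_0}$; the vanishing pairwise linking numbers $\mathrm{lk}(J_i,J_j) = 0$ and the nested layout on parallel tori ensure no unwanted intersections are introduced.
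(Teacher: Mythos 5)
Your construction of the ambient isotopy moving $C$ to an $(s_1,1)$-curve inside $W$ while fixing the $J_i$ is fine (modulo choosing the constants so that $\Psi_{r_0}$ is actually injective, which is routine), but it only accomplishes the easy half of the lemma. The real content is the existence of \emph{mutually disjoint} spanning discs for the $J_i$ meeting the repositioned $C$ in \emph{exactly} $s_i$ points, and your argument for this has a genuine gap. First, the step ``$J_i$ bounds a disc meeting $C'$ algebraically in $s_i$ points; a standard minimization rel boundary then gives exactly $s_i$ geometric intersections'' is not valid: the minimal geometric intersection of a spanning disc for the unknot $J_i$ with $C'$ is the wrapping number of $C'$ in the solid torus $S^3-N(J_i)$, which is not controlled by the linking number in general (a curve can have small or zero linking number with an unknot yet puncture every spanning disc many times). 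To get the exact count you must use the specific position of $C'$ relative to $J_i$ — e.g.\ that in the complement of $J_i$ the curve $C'$ can be isotoped so that all intersections with a meridian disc are coherent — and you never establish this.

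Second, the simultaneous statement (exact counts for every $i$ \emph{and} pairwise disjoint discs) is exactly what the lemma is for, and you explicitly defer it as ``the main obstacle'' with only a gesture at bookkeeping. The proposed source of discs does not work as described: the original augmenting discs of \refdef{AugComponent} were positioned relative to the strands of $K$, not relative to $C$ (which is only defined after the isotopy of \reflem{MultipleJ}), and that isotopy inserts full twists among the $J_i$, so the transported discs are far from level and nothing in your argument counts their intersections with $C'$. The paper closes precisely this gap by a different maneuver: after dragging $C$ around $F\cap B$ into a $T(s_1,1)$ curve (the same move as in \refprop{OneComponentJ}), it \emph{reverses} the isotopy of \reflem{MultipleJ}, unwinding each $J_i$ back into a round, crossing-free circle; in that normal form the level discs are manifestly disjoint and meet the $s_1$ strands of $C$ coherently, so the geometric count equals the linking number $s_i$. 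Some version of this unwinding (or an equivalent normal-form argument for the discs) is what your proposal is missing, and without it the lemma is not proved.
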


\begin{proof}
The isotopy can be viewed as follows. First, each $T(s_i,1)$ can be isotoped to have an arc inside of the Heegaard torus $F$, and an arc on $F$ running $s_i$ times around the longitude. Shrink $C$ slightly and isotope it to meet $F\cap B$ in exactly two points, one near the innermost boundary of the annulus $F\cap B$ and one near the outermost boundary component. Now beginning with the outermost point of intersection, drag it around $F\cap B$ keeping it disjoint from the arcs of the $T(s_i,1)$, just as in the proof of \refprop{OneComponentJ}. After $s_1$ passes, an arc of $C$ lies outside of $B$ and an arc is embedded in $F\cap B$, and these two arcs form a torus knot $T(s_1,1)$; see \reffig{InvertC1}, left and second left.

\begin{figure}
  \centering
  \includegraphics{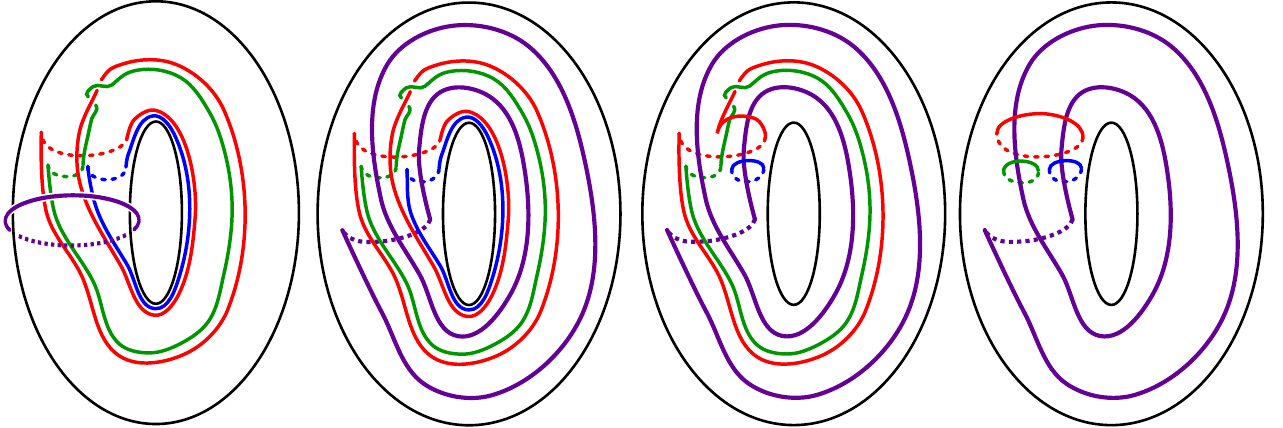}
  \caption{Left to right: The unknot $C$ is shown encircling the torus links $T(s_i,1)$. Isotope $C$ as in \reffig{OneComponentJ}, to obtain a torus knot $C=T(s,1)$. Now reverse the procedure of \reffig{MultipleJ}, unwinding components from the innermost side out. When finished, link components are as claimed.}
  \label{Fig:InvertC1}
\end{figure}

Reverse the process of the proof of \reflem{MultipleJ}, unwinding the knots $J_1=T(s_1,1)$ through $J_k=T(s_k,1)$ to form unknots with no crossings, each now encircling $C=T(s_1,1)$. Two steps of this process are shown for an example in the two rightmost panels of \reffig{InvertC1}.

Because each of the knots $J_1, \dots, J_k$ is unknotted and unlinked from the others, the components separate into unknotted, unlinked components. Because the linking number of $C$ and $J_j=T(s_j,1)$ is $s_j$ before isotopy, for $j=1, \dots, k$, the result of unwinding $J_j$ is an unknot with no crossings encircling $s_j$ strands of $C$ in its form $T(s_1,1)$. 
\end{proof}

\begin{theorem}\label{Thm:SatelliteMultTwists}
Let $a_1, b_1, \dots, a_n, b_n, s_1, t_1, \dots, s_m, t_m, p,q$ be integers satisfying:
\[ 1<a_1<\dots<a_n<q < qs_1<\dots<qs_m, \mbox{ and }
b_i, t_i >0 \mbox{ for all $i$.}
\]
Then the T-link
\[ T((a_1,b_1), \dots, (a_n,b_n),(s_1q,s_1qt_1), \dots, (s_mq, s_mqt_m),(p,q)) \]
is a satellite link with companion the T-link
\[ T((s_1,t_1s_1), \dots, (s_m,t_ms_m+1)) \]
and pattern the link $\overline{\tau}_r*T(q,p+s_1^2qt_1+\dots+s_m^2qt_m)$, where $\tau_r$ is the tangle containing the braid $(a_1,b_1)*\dots*(a_n,b_n)$. 
\end{theorem}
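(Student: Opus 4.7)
The plan is to encode each full-twist factor $(s_i q, s_i q t_i)$ as Dehn filling on an auxiliary unknot, and then apply \reflem{MultipleJ} and \reflem{InvertC} to read off the satellite structure. Let $\tau_r$ denote the braid $(a_1,b_1) * \dots * (a_n,b_n)$ on $r = a_n$ strands, and set $K = \tau_r * T(p,q)$. For each $i=1,\dots,m$, let $J_i = J_{0, s_i q}$ be as in \refdef{AugComponent}, so that $J_i$ encircles the leftmost $s_i q$ strands of $K$. Since $1/t_i$ Dehn filling on $J_i$ inserts $t_i$ full twists on those strands, the given T-link is recovered from $K \cup J_1 \cup \dots \cup J_m$ by this sequence of fillings. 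Because every $J_i$ encircles a multiple of $q$ strands, \reflem{MultipleJ} lets us simultaneously isotope them off the Heegaard torus $F$, producing an essential torus $T$ parallel to $F$: on one side, $T$ bounds a solid torus $V$ containing $K$ in the form $\overline{\tau}_r*T(q,p)$, and on the other a solid torus $W$ in which each $J_i$ is the torus unknot $T(s_i, 1)$.

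Since $J_m$ is the largest of the $J_i$, I would next apply \reflem{InvertC} on the $W$-side (with $J_m$ in the role of $J_1$ of that lemma) to produce an ambient isotopy of $S^3$ after which the core $C$ of $W$ becomes a torus knot $T(s_m, 1)$ and each $J_i$ becomes an unknot bounding a disc that meets $C$ in exactly $s_i$ points, the discs being pairwise disjoint; the pattern solid torus $V$ is unaffected. The $1/t_i$ Dehn fillings now all happen on the $W$-side, each inserting $t_i$ full twists on the $s_i$ strands of $C$ pierced by the disc of $J_i$. Presenting $C = T(s_m, 1)$ as the closure of $(\sigma_1\dots\sigma_{s_m-1})^{1}$ and stacking the filling-induced factors $(s_i, s_i t_i)$ in order of increasing $s_i$, with the trailing $(s_m, 1)$ absorbed into the final factor, the new core becomes the closure of
\[
(\sigma_1\dots\sigma_{s_1-1})^{s_1 t_1} \cdots (\sigma_1\dots\sigma_{s_{m-1}-1})^{s_{m-1}t_{m-1}}(\sigma_1\dots\sigma_{s_m-1})^{s_m t_m + 1},
\]
which is precisely the claimed companion $T((s_1, t_1 s_1), \dots, (s_m, t_m s_m + 1))$.

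Inside $V$ the link remains $\overline{\tau}_r * T(q, p)$ as a submanifold of $S^3$, since all fillings occur outside $V$. The satellite convention, however, requires the pattern to be recorded in the standard unknotted solid torus $S^1 \times D^2$ under the identification sending the $0$-framed longitude to the standard longitude of the new companion $C$. The $1/t_i$ surgery on $J_i$ changes the $0$-framing of $C$ by $t_i s_i^2$ (the surgery coefficient times the square of the linking number $s_i$ of $J_i$ with $C$), so after all fillings the new $0$-framed longitude of $C$ differs from the original longitude of $\partial V$ by $\sum_i t_i s_i^2$ meridians. Reparametrising $V$ as a standard solid torus accordingly introduces $\sum_i t_i s_i^2$ full longitudinal twists, converting the $T(q,p)$ portion of the pattern into $T(q, p + q\sum_i t_i s_i^2)$ and yielding the stated pattern $\overline{\tau}_r * T(q, p + s_1^2 q t_1 + \dots + s_m^2 q t_m)$. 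The main obstacle will be this framing bookkeeping: one must match signs and orientations coming from the conventions of \refdef{TangleSum} together with the isotopies of \reflem{MultipleJ} and \reflem{InvertC} so that the twists accumulate with the correct sign.
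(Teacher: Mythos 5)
Your proposal is correct and follows the paper's proof essentially step for step: realise the factors $(s_iq,s_iqt_i)$ as $1/t_i$ Dehn fillings on the unknots $J_{0,s_iq}$ added to $\tau_r*T(p,q)$, apply \reflem{MultipleJ} and then \reflem{InvertC} (with $J_{0,s_mq}$ in the role of the largest component), and read off the companion braid $(s_1,s_1t_1)*\dots*(s_m,s_mt_m+1)$ and the pattern from the surgeries, your framing-change count $\sum_i t_is_i^2$ being the same computation the paper carries out by arranging $q$ strands through each disc and citing Rolfsen. One labelling slip to fix: in \reflem{InvertC} the curve $C$ is the core of the pattern solid torus $V$ (the solid torus whose complement contains the $J_i$), not the core of $W$, and the ambient isotopy does carry $V$ along (only the pair $(V,\text{pattern})$ is unchanged up to homeomorphism); this is in fact how you use $C$ afterwards, since you compare its framing with the longitude of $\partial V$ and identify its image under the fillings with the companion.
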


\begin{remark}
Observe that when the solid torus containing the pattern
\[ \overline{\tau}_r*T(q,p+s_1^2qt_1+\dots+s_m^2qt_m)\]
is embedded as an unknotted solid torus in $S^3$, the pattern as a link in $S^3$ is equivalent to the link in $S^3$
\[ \tau_r*T(p+s_1^2qt_1+\dots+s_m^2qt_m,q) = T((a_1,b_1),\dots,(a_n,b_n),(p+s_1^2qt_1+\dots+s_m^2qt_m,q)), \]
which is a T-link. Thus these links satisfy \refconj{LorenzSatellite}.
\end{remark}

\begin{proof}[Proof of \refthm{SatelliteMultTwists}]
With $\tau_r$ as described, consider the link $\tau_r*T(p,q) \cup J_{0,s_1q}\cup \dots \cup J_{0,s_mq}$. The T-link in the theorem is obtained by performing $1/t_i$ Dehn filling on the link component $J_{0,s_iq}$, for $i=1, \dots, m$.

By \reflem{MultipleJ}, we may isotope the $J_{0,s_iq}$ simultaneously to form knots $T(s_i,1)$ inside the Heegaard torus $F$. Let $T$ be the essential torus obtained by pushing off $F$ slightly, so that $T$ bounds a solid torus containing $\overline{\tau}_r*T(q,p)$ on one side; call this solid torus $V$. Then the core of $V$ is an unknot encircling each of the $T(s_i,1)$. By \reflem{InvertC}, we may isotope $V$ to the torus knot $T(s_m,1)$, isotoping each $J_{0,s_iq}$ to a disjoint collection of unknots, bounding a disc meeting the first $s_i$ strands of $T(s_m,1)$, for $i=1, \dots, m$.

Now perform $1/t_i$ Dehn surgery on $J_{0,s_iq}$, for $i=1,\dots,m$. This replaces $V$ with the T-link
\[
T((s_1,t_1s_1), \dots, (s_{m-1}, t_{m-1}s_{m-1}), (s_m,t_ms_m+1)).
\]

The twisting also affects the link $\overline{\tau}_r*T(q,p)$ inside $V$. We may arrange the link such that each time $V$ runs through one of the discs $D_i$ bounded by the link $J_{0,s_iq}$, exactly $q$ strands within $V$ run through $D_i$. Then performing $1/t_i$ Dehn surgery on each $J_{0,s_iq}$ adjusts the $T(q,p)$ torus knot to become the $T(q,p+s_1^2qt_1+\dots+s_m^2qt_m)$ torus knot; see, for example, \cite[p.~267]{Rolfsen}. Hence the pattern is the link $\overline{\tau}_r*T(q,p+s_1^2qt_1+\dots+s_m^2qt_m)$. 
\end{proof}

Theorem~\ref{Thm:SatelliteIntro} is an immediate consequence of \refthm{SatelliteMultTwists}.

\begin{proof}[Proof of \refthm{SatelliteIntro}]
Let $\tau_r$ be the braid consisting of full-twists $(a_1,a_1b_1)*\dots*(a_n,a_nb_n)$. Then $\overline{\tau}_r = \tau_r$, and
\[ \overline{\tau}_r*T(q,p+s_1^qt_1+\dots+s_m^2qt_m)=T((a_1, a_1b_1),\dots,(a_n,a_nb_n),(q,p+s_1^qt_1+\dots+s_m^2qt_m)), \]
as in \reflem{BirmanKofmanSym}. Theorem~\ref{Thm:SatelliteIntro} then follows from \refthm{SatelliteMultTwists} using this tangle $\tau_r$.
\end{proof}

When $\tau_r$ is nontrivial, the pattern is not a torus knot. Thus \refthm{SatelliteMultTwists} gives counterexamples to the conjecture attributed to Morton, provided such links do not have an alternate description as a cable on a T-link. We prove this is not the case for infinitely many examples using a theorem that follows immediately from the following result of Lee~\cite[Corollary~1.2]{Lee:2018Satellite}.

\begin{theorem}[Lee]\label{Thm:Lee}
Suppose $p,q$ are positive, relatively prime integers, and $r,s$ are positive integers such that $1<r<q<p$ and $s>1$. Then the twisted torus knot $T((r,rs),(p,q))$ is hyperbolic. 
\end{theorem}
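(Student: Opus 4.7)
The plan is to deduce hyperbolicity from Thurston's geometrisation trichotomy: a knot in $S^3$ whose complement is neither a torus knot complement nor a satellite knot complement must be hyperbolic. Thus under the hypotheses $1<r<q<p$, $\gcd(p,q)=1$, and $s>1$, it suffices to rule out the possibilities that $T((r,rs),(p,q))$ is either a torus knot or a satellite knot.

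For the satellite case, I would appeal directly to Lee's \cite[Corollary~1.2]{Lee:2018Satellite}, which characterises when a twisted torus knot $T((r,rs),(p,q))$ with $|s|>1$ is a satellite. The classification isolates a short list of arithmetic relations among $r$, $p$, and $q$ under which a companion torus is forced to appear, and the strict inequalities $1<r<q<p$ together with $\gcd(p,q)=1$ exclude every entry of that list. For the torus case, one would similarly consult the torus classification in \cite{Lee:PosTwistedTorus-Torus} for positive twisted torus knots and verify that our parameter range falls outside Lee's torus family. Combining these two exclusions with Thurston's theorem immediately yields hyperbolicity.

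The only substantive obstacle is bookkeeping: one must carefully reconcile the conventions in Lee's papers with those used here, in particular which integer plays the role of $r$ versus $q$, the sign and placement of the twisting parameter $s$, and the full-twist convention $(r, rs)$ in the T-link notation $T((r,rs),(p,q))$. Once this translation is in hand, the strict chain $1<r<q<p$ with $s>1$ places the knot squarely in the ``hyperbolic'' stratum of Lee's trichotomy, and no further geometric argument is required; the theorem is an immediate corollary.
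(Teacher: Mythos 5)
Your proposal matches the paper's treatment: the paper offers no independent argument for this statement, attributing it to Lee and citing \cite[Corollary~1.2]{Lee:2018Satellite}, from which it is said to follow immediately, and your plan---combine Lee's satellite classification for $|s|>1$ with his classification of positive twisted torus knots that are torus knots, then invoke Thurston's geometrisation trichotomy---is precisely the intended derivation behind that citation. The only remaining work is the convention-matching and arithmetic exclusion you already flag as bookkeeping, so the proposal is correct and essentially the same approach.
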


\begin{lemma}\label{Lem:HyperbolicPandC}
There exist infinitely many choices of integers in \refthm{SatelliteMultTwists} such that the complement of the companion in $S^3$ and the complement of the pattern in $S^1\times D^2$ are both hyperbolic.

In particular, for any relatively prime integers $p$, $q$, and integers $a_1$, $c$, $s_1$, $s_2$, $t_1$, and $t_2$ satisfying $1< a_1 < s_1 q < s_2 q <p$ and $c, t_1, t_2 >1$, the T-link
\[ T((a_1,a_1c), (s_1 q, s_1 q t_1), (s_2 q, s_2 q t_2), (p,q)) \]
is a satellite link whose companion is a hyperbolic twisted torus knot, and whose pattern is a hyperbolic twisted torus knot in a solid torus.
\end{lemma}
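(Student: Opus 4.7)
The plan is to apply \refthm{SatelliteMultTwists} with $n = 1$ and $m = 2$ to produce the satellite decomposition, and then to invoke Lee's hyperbolicity criterion (\refthm{Lee}) on the companion and on the pattern, after first rewriting the companion so that it satisfies Lee's standing ordering.

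Setting $\tau_r = (a_1, a_1 c)$ (a single full twist on $a_1$ strands), \refthm{SatelliteMultTwists} gives that the T-link in question is a satellite with companion
\[
C_0 = T((s_1, t_1 s_1), (s_2, t_2 s_2 + 1))
\]
and pattern $\overline{\tau}_r * T(q, p')$ in a solid torus, where $p' = p + s_1^2 q t_1 + s_2^2 q t_2$. Because $\tau_r$ is a full twist, $\overline{\tau}_r = \tau_r$, and \reflem{BirmanKofmanSym} identifies the pattern, placed in $S^3$ via the standard unknotted embedding of its solid torus, with the T-knot
\[
P_0 = T((a_1, a_1 c), (p', q)).
\]

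The companion $C_0$ is not immediately in \refthm{Lee}'s format $T((r, rs), (p, q))$ with $1 < r < q < p$, because the exponent $t_2 s_2 + 1$ exceeds the strand count $s_2$. The key observation is that $(s_1, t_1 s_1)$ is a full twist, so the mirror tangle equals the original, and \reflem{InvertTpq} combined with \reflem{BirmanKofmanSym} swaps the roles of strand count and twist count in the outer portion:
\[
C_0 = T((s_1, t_1 s_1), (t_2 s_2 + 1, s_2)).
\]
Now with $r = s_1$, $s = t_1$, $p = t_2 s_2 + 1$, $q = s_2$, Lee's hypothesis $1 < r < q < p$ becomes $1 < s_1 < s_2 < t_2 s_2 + 1$, which holds using $s_1 \geq 2$ (implicit, since $s_1 = 1$ would make the $(s_1, t_1 s_1)$ braid trivial), $s_1 < s_2$, and $t_2 \geq 2$. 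Meanwhile $\gcd(t_2 s_2 + 1, s_2) = 1$ is automatic and $s = t_1 > 1$ is given, so \refthm{Lee} yields that $C_0$ is a hyperbolic twisted torus knot.

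The pattern $P_0 = T((a_1, a_1 c), (p', q))$ is already in Lee's format with $r = a_1$, $s = c$, $p = p'$, $q = q$. The hypothesis $1 < a_1 < q < p'$ holds (using $a_1 < q$, which is implicit in the standing inequalities, and $q < p < p'$); $\gcd(p', q) = \gcd(p, q) = 1$ since $p' \equiv p \pmod q$; and $c > 1$ is given. So $P_0$ is also a hyperbolic twisted torus knot by \refthm{Lee}, sitting in the satellite solid torus produced by \refthm{SatelliteMultTwists}.

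The principal technical obstacle is the rewriting of the companion via the full-twist symmetry lemmas to fit Lee's standing ordering; after that, hyperbolicity of both pieces follows by direct verification of the hypotheses of \refthm{Lee}, and the ``infinitely many'' assertion is then immediate from the freedom to vary the eight parameters over any infinite family satisfying the stated inequalities.
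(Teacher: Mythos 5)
Your satellite decomposition and the application of \refthm{Lee} to the companion and to the pattern-as-a-knot-in-$S^3$ follow the paper's route (the paper is terser about rewriting the companion, but your use of \reflem{BirmanKofmanSym} to pass to $T((s_1,t_1s_1),(t_2s_2+1,s_2))$ is the right way to meet Lee's ordering, and your reading of the implicit constraints $a_1<q$ and $s_1\geq 2$ matches the intended hypotheses of \refthm{SatelliteMultTwists}). However, there is a genuine gap at the end: the lemma asserts that the complement of the pattern \emph{in the solid torus} $S^1\times D^2$ is hyperbolic, and you only establish that the pattern, re-embedded via the unknotted solid torus, is a hyperbolic knot in $S^3$. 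These are not the same statement, and the implication you rely on is false in general: if $P$ is a closed braid in the solid torus with axis $J$, hyperbolicity of $S^3-P$ does not force hyperbolicity of $S^3-(P\cup J)$, since the braid could be reducible or periodic, producing an essential torus or annulus in the solid-torus complement even though the knot itself is hyperbolic. Nonzero winding number alone does not rule this out.

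The paper closes this gap with an extra step you omit: letting $J$ be the braid axis of $P=T((a_1,a_1c),(q,p+s_1^2qt_1+s_2^2qt_2))$, it invokes work of Ito (Example~5.7 of the cited paper on braid orderings), which shows that when the torus-braid power $p+s_1^2qt_1+s_2^2qt_2$ is at least $3q$ (automatic here, since $s_1,t_1\geq 2$ gives $s_1^2qt_1\geq 8q$), the geometric structure of $S^3-(P\cup J)$ agrees with that of $S^3-P$; combined with Lee's hyperbolicity of $S^3-P$ this makes the pattern braid pseudo-Anosov and hence $S^3-(P\cup J)$ hyperbolic, which is exactly hyperbolicity of the pattern in $S^1\times D^2$. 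Without this (or some substitute argument ruling out essential annuli and tori in the solid-torus complement), your proof does not yield the statement of the lemma, and in particular would not support the JSJ conclusion of Theorem~\ref{Thm:SatelliteLorenzIntro}, which needs both JSJ pieces to be hyperbolic.
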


\begin{proof}
Take integers as in the statement of the lemma. 
Observe that $s_1 < s_2t_2+1$, so $s_1$ cannot be a multiple of $s_2t_2+1$.
Also observe that $a_1<p+s_1^2qt_1+s_2^2qt_2$, so $a_1$ is not a multiple of $p+s_1^2qt_1+s_2^2qt_2$.

By \refthm{SatelliteMultTwists}, the T-link
\[T((a_1, a_1c),(s_1q,s_1qt_1),(s_2q,s_2qt_2),(p,q))
\]
is a satellite link with companion the T-link
\[ T((s_1,t_1s_1),(s_2,t_2s_2+1)) 
\]
and pattern the link $\overline{\tau}_r*T(q,p+s_1^2qt_1+s_2^2qt_2)$, where $\tau_r$ is the tangle containing the braid $(\sigma_1\dots\sigma_{a_1-1})^{a_1c}$. Because this is a full twist, $\overline{\tau}_r$ equals $\tau_r$. This forms the T-link $T((a_1,a_1c),(q,p+s_1^2qt_1+s_2^2qt_2))$.

By symmetry,
\[ T((a_1,a_1c),(q,p+s_1^2qt_1+s_2^2qt_2)) = T((a_1,a_1c),(p+s_1^2qt_1+s_2^2qt_2, q)). \]
Therefore by \refthm{Lee}, both the companion and the pattern have hyperbolic complement, when viewed as links in $S^3$.

We actually wish to show something slightly different, namely that the complement of the pattern in a solid torus is hyperbolic. Let $J$ be the braid axis for the pattern $P=T((a_1,a_1c),(q,p+s_1^2qt_1+s_2^2qt_2))$. We will show $S^3-(P\cup J)$ is hyperbolic.

To do so, we apply work of Ito, particularly \cite[Example~5.7]{Ito:BraidOrdering}. Here, it is shown that if $p+s_1^2qt_1+s_2^2qt_2$ is at least $3q$, then the geometric structure on $S^3-(P\cup J)$ agrees with the geometric structure on $S^3-P$. Since Lee has shown that $S^3-P$ is hyperbolic, this implies that the braid is pseudo-anosov, implying that $S^3-(P\cup J)$ is hyperbolic. 
\end{proof}

Lemma~\ref{Lem:HyperbolicPandC} is somewhat unwieldy as stated, requiring many choices of integers. In fact, by fixing values of all integers in the statement except $c$, and letting $c$ vary among integers greater than $1$, we obtain an infinite family of T-links whose companion and pattern in $S^1\times D^2$ are hyperbolic. In particular, choose $a_1=2$, $q=3$, $s_1=2$, $s_2=3$, $p=11$, and $t_1=t_2=2$. Then $s_2t_2+1=7$, and $p+s_1^2qt_1+s_2^2qt_2= 11+4*3*2+9*3*2=89$. Thus the T-link
\[ T((2,2c),(6,12),(9,18),(11,3)) \]
is satellite with companion the T-link $T((2,4),(3,7))$, and pattern the T-link
\[ P=T((2,2c),(3,89)) = T((2,2c),(89,3)), \]
both of which are hyperbolic. Let $J$ denote the braid axis for $P$. Since $89>9$, \cite[Example~5.7]{Ito:BraidOrdering} implies that the geometric structure on $S^3-(P\cup J)$ agrees with that on $S^3-P$, hence it is also hyperbolic.

Theorem~\ref{Thm:SatelliteLorenzIntro} from the introduction now is an immediate consequence. 

\begin{named}{\refthm{SatelliteLorenzIntro}}
There exist infinitely many Lorenz knots that are satellites for which there are exactly two components of the JSJ decomposition, and both components are hyperbolic. Thus by the uniqueness of the JSJ decomposition of a knot complement, the knots cannot be cables on Lorenz knots. 
\end{named}

\begin{proof}
  Take the Lorenz knots to be a sequence as in \reflem{HyperbolicPandC}. 
\end{proof}

For example, take the fixed choices of integers $a_1=2$, $q=3$, $s_1=2$, $s_2=3$, $p=11$, $t_1=t_2=a_1=2$ as above, and let $c$ be an arbitrary integer greater than $1$. Then $T((2,2c),(6,12),(9,18), (11,3))$ gives an infinite family required by \refthm{SatelliteLorenzIntro}.

\section{Hyperbolic links and satellites}\label{Sec:Characterising}

We have found infinitely many satellite Lorenz links. We would like to characterise all satellite Lorenz links. As in the previous section, infinitely many Lorenz links are obtained by starting with a link of the form $T(p,q)\cup J_{0,a_1}\cup \dots \cup J_{0,a_n}$, and performing $1/b_i$ Dehn filling on each component $J_{0,a_i}$, for $b_i>0$ and $i=1,\dots, n$. Such links form a family of T-links with full twisting. In this section, we show that if we require full twisting, then provided the amount of twisting $b_i$ is high, the only way to obtain satellites is by full twists on components $J_{0,a_i}$ with some $a_i$ a multiple of $q$, giving further evidence for \refconj{LorenzSatellite}. This is the content \refcor{FullTwistConj}, which is an immediate consequence of \refthm{AugHyperbolic}. In this section, we complete the proof of these results.

\begin{lemma}\label{Lem:Irreducible}
  Let $1<q<p$, and let $1<a_1<\dots<a_n<q$ be integers. 
  The complement of the link $T(p,q)\cup J_{0,a_1}\cup \dots \cup J_{0,a_n}$ is irreducible and boundary irreducible. 
\end{lemma}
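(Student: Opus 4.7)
The plan is to establish two properties of $L = T(p,q) \cup J_{0,a_1} \cup \dots \cup J_{0,a_n}$: (a) $L$ is non-split, and (b) every boundary torus of $E(L) = S^3 \setminus N(L)$ is incompressible. These are equivalent to irreducibility and boundary irreducibility of $E(L)$, respectively, via Alexander's theorem and a standard homological slope argument. The whole proof is a linking-number bookkeeping exercise.

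The key preliminary is the following computation. By \refdef{AugComponent}, $J_{0,a_i}$ bounds an embedded disc in $S^3$ meeting $T(p,q)$ transversely in exactly $a_i$ points and disjoint from every other $J_{0,a_k}$, so
\[
\bigl|\operatorname{lk}(J_{0,a_i}, T(p,q))\bigr| = a_i \geq 2, \qquad \operatorname{lk}(J_{0,a_i}, J_{0,a_k}) = 0.
\]
In particular $T(p,q)$ is linked with every $J_{0,a_i}$. For (a), I would argue by contradiction: a hypothetical splitting $2$-sphere $S \subset E(L)$ would place $T(p,q)$ on one side and at least one $J_{0,a_i}$ on the other, forcing their linking number to vanish, contrary to the computation above. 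Thus $L$ is non-split, and since every embedded $2$-sphere in $S^3$ bounds a ball, every $2$-sphere in $E(L)$ bounds a ball inside $E(L)$.

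For (b), I would analyze a hypothetical compressing disc $D$ for the boundary torus $T_i = \bdy N(L_i)$. In $H_1(S^3 \setminus N(L_i); \ZZ) \cong \ZZ$, the meridian $\mu_i$ generates while the longitude $\lambda_i$ is null-homologous, so the primitive boundary class $\bdy D = \alpha \mu_i + \beta \lambda_i$ must have $\alpha = 0$, hence $\bdy D$ is a longitude. This would force $L_i$ to be an unknot bounding a disc disjoint from the other components of $L$. If $L_i = T(p,q)$ this contradicts $p, q > 1$; if $L_i = J_{0,a_j}$ it contradicts $|\operatorname{lk}(J_{0,a_j}, T(p,q))| = a_j \geq 2$. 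There is no real obstacle in the argument; the only care needed is the identification of the slope of a compressing disc as a longitude, which is the homological argument just outlined.
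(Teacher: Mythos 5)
Your proof is correct and takes essentially the same approach as the paper: both irreducibility and boundary irreducibility are reduced to the facts that $\operatorname{lk}(J_{0,a_i},T(p,q))=\pm a_i\neq 0$ and that the torus knot is nontrivial, so a splitting sphere or a compressing disc would yield a contradiction. The only difference is that you make explicit the homological identification of the compressing-disc slope with the longitude, which the paper leaves implicit.
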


\begin{proof}
If there exists a 2-sphere embedded in the link complement that does not bound a ball, then it contains $T(p,q)$ on one side and some $J_{0,a_i}$ on the other. But $J_{0,a_i}$ is unknotted, and thus the linking number of $J_{0,a_i}$ with $T(p,q)$ must be zero; this is a contradiction.

Similarly, suppose there exists a boundary compressing disc for the link complement. It cannot have boundary on $T(p,q)$, because torus knots are nontrivial. On the other hand, if its boundary lies on $J_{0,a_i}$ then again the linking number of $J_{0,a_i}$ and $T(p,q)$ is zero, a contradiction. 
\end{proof}

The proof of \refthm{AugHyperbolic} requires the following technical sublemma, which will be used to rule out essential tori in the link complement.

\begin{lemma}\label{Lem:ComponentsInBall}
Let $1<q<p$ where $p,q$ are relatively prime integers. Suppose $a_1, \dots, a_m$ are distinct integers such that no $a_i$ is a multiple of $q$. Consider the link
\[ L=T(p,q)\cup J_1 \cup \dots \cup J_m, \]
where we write $J_{0,a_i}$ as $J_i$.
Then there is no solid torus $V$ in $S^3$ with each $T(p,q)\cup J_i$ lying in a ball inside of $V$, and $\bdy V =T$ an essential torus in $S^3-L$. 
\end{lemma}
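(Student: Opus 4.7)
I argue by contradiction: suppose such a solid torus $V$ exists. The plan is to construct a compressing disk for $\partial V$ in $V - L$, contradicting the essentiality of $\partial V$ in $S^3 - L$.

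Since each sublink $T(p,q)\cup J_i$ lies in a ball $B_i\subset V$, every component of $L$ is null-homotopic in $V$, and in particular has winding number zero in $V$. The central claim is that after an ambient isotopy of $L$ within $V$, the entire link $L$ can be placed inside a single ball $B\subset V$. To see this, fix $B_1$ containing $T(p,q)\cup J_1$. For each $j\neq 1$, the component $J_j$ is an unknot with linking number $a_j$ with $T(p,q)$ and linking number $0$ with each $J_k$ for $k\neq j$, by \refdef{AugComponent} (the $J_{0,a_i}$ are pushed vertically above one another so each bounds a disc in $S^3$ that misses the other $J_k$). The hypothesis $T(p,q)\cup J_j\subset B_j$ exhibits $J_j$ in a standard local position relative to $T(p,q)$. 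I isotope $J_j$ within $V - (L - J_j)$ into $B_1$, placing it as an analogous local configuration above $J_1$ and previously moved components. Doing this sequentially for $j=2,\ldots,m$ yields $L\subset B := B_1$.

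Once $L$ lies inside a single ball $B\subset V$, the meridian curve $\mu$ of $V$ is null-homotopic in $V - B$: the inclusion $V - B\hookrightarrow V$ induces an isomorphism $\pi_1(V - B)\cong \pi_1(V)\cong \mathbb{Z}$ (removing an open ball preserves $\pi_1$ of a 3-manifold) and $\mu$ bounds the meridian disc of $V$. By the Loop Theorem applied to $V - L$, there exists an embedded disc $D\subset V - B\subset V - L$ with $\partial D = \mu$, which is a compressing disc for $\partial V$ in $V - L$. This contradicts the essentiality of $\partial V$ in $S^3 - L$.

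The hard part will be the isotopy step: one must carefully track the isotopy classes of the unknots $J_j$ in the complement $V - (L - J_j)$ and verify that the local model furnished by the ball $B_j$ can be carried into $B_1$ without crossing any other link component. The ball hypothesis is what enables this reduction, by certifying that each $J_j$ sits in a position that does not engage the knotted topology of the solid torus $V$, so its position can be adjusted to lie near $T(p,q)$ inside any other ball containing $T(p,q)$.
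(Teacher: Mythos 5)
Your argument reduces everything to a single claim: that the components $J_2,\dots,J_m$ can be isotoped, one at a time, inside $V$ and in the complement of the remaining components, into the one ball $B_1$ containing $T(p,q)\cup J_1$. This is the entire content of the lemma, and the proposal does not prove it; indeed you flag it yourself as ``the hard part.'' The hypothesis only says that each two-component sublink $T(p,q)\cup J_j$ lies in \emph{some} ball $B_j\subset V$; it gives no control whatsoever on how $J_j$ is entangled with the other components $J_k$, $k\neq j$, nor on how the various balls $B_j$ intersect one another or sit inside $V$. ``Lying in a ball'' does not put $J_j$ in any ``standard local position'': inside $B_j$ the pair $(T(p,q),J_j)$ is a nontrivial two-component link, and nothing is said about its position relative to the other $J_k$. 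Vanishing pairwise linking numbers do not imply that components can be disentangled (Borromean- and Whitehead-type phenomena are exactly the obstruction), so the sequential isotopy of $J_j$ within $V-(L-J_j)$ into $B_1$ is not justified by anything you have stated. Once that claim is granted, the rest (the $\pi_1$ computation for $V$ minus a ball, Dehn's lemma producing a compressing disc for $\bdy V$, contradicting essentiality) is fine, but the claim itself is assumed, not proved. Note also that your argument would use essentially nothing about the specific link beyond linking numbers, whereas the difficulty of the lemma lies precisely in ruling out complicated ways the torus $T$ and the components can interact.

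For comparison, the paper's proof does not attempt any such reduction. It places $T=\bdy V$ in minimal position with respect to the punctured Heegaard torus $P=F-N(L)$, rules out curve types of $T\cap P$ one by one using irreducibility (\reflem{Irreducible}), linking and wrapping number constraints coming from the ball hypotheses, an annulus-swap to contradict minimality, and finally a lemma of Lee forcing a tube of $V$ to meet every $J_i$, which yields the contradiction. If you want to salvage your approach, you would have to prove the ``all of $L$ in one ball'' statement directly, and there is no evident route to it from the stated hypotheses; as it stands the proposal has a genuine gap at its central step.
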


\begin{proof}
Suppose not. That is, suppose there exists a solid torus $V$ with $\bdy V=T$ an essential torus, and each $T(p,q)\cup J_i$ lies in a ball inside of $V$.
  
Let $F$ denote the Heegaard torus for $S^3$ on which $T(p,q)$ lies. Let $P$ be the surface $P=F-N(L)$. So $P$ is a sphere with $2m$ boundary components, two of which correspond to $\bdy N(T(p,q))$ and are isotopic to parallel nontrivial curves in $\bdy N(T(p,q))$, and two of which correspond to meridians of $N(J_i)$ for each $i$. The torus $T$ must intersect $P$, else it is embedded within one of the two handlebodies of $S^3-N(J_1\cup\dots\cup J_m)-N(F)$, contradicting the fact that it is essential. Thus $T$ intersects $P$ in some number of simple closed curves. Take $T$ to be a torus satisfying the hypotheses of the lemma such that the number of intersections of $T\cap P$ is minimal over all such tori. 

Because $T$ is incompressible, we may assume that $T\cap P$ does not consist of any closed curve bounding a disc in $P$: any innermost such curve also bounds a disc in $T$, and by irreducibility (\reflem{Irreducible}),
the union of the disc on $P$ and that on $T$ bounds a ball in $S^3-L$ that can be used to isotope $T$ through $P$, removing the intersection. Thus $T\cap P$ consists of closed curves encircling boundary components of $P$.

We now show that there is no curve of $T\cap P$ that is parallel to a boundary component of $P$ corresponding to $T(p,q)$. For if a curve of $T\cap P$ is parallel to $N(T(p,q))\cap P$, then we may isotope $T(p,q)$ through an annulus on $P$ with one boundary component on $N(T(p,q))$ and the other on $T\cap P$ to lie on $T$. Because $T(p,q)$ does not bound a disc in $S^3$, that annulus on $P$ does not have its second boundary component a meridian of $V$. Thus the wrapping number of $T(p,q)$ on $T=\bdy V$ after this isotopy is at least $1$. But $T(p,q)$ lies in a ball inside $T$, so this is impossible.

Similarly, there is no innermost curve of $T\cap P$ that encloses exactly one boundary component of $P$ corresponding to $T(p,q)$, as follows. By the above paragraph, any such curve must also enclose a boundary component corresponding to $J_i$ for some $i$. But then fill in all $J_i$; that is, perform trivial Dehn filling on each $J_i$. Because the curve of intersection is innermost, again there will be an annulus between $T$ and $T(p,q)$ in $S^3-T(p,q)$ that does not bound a disc in $S^3$. Again we may use the annulus to isotope $T(p,q)$ onto $T$, and $T(p,q)$ will have wrapping number at least $1$, contradicting the fact that it lies in a ball.

Next, we show there is no curve of $T\cap P$ that is parallel to a boundary component of $P$ corresponding to $J_i$ for some $i$. For if so, then this curve bounds an annulus on $P$ meeting one of the components $J_i$ in a meridian, hence the curve bounds a disc in $S^3$. Thus $J_i$ meets a meridian of the solid torus $V$ in a single point. But this is impossible: because $J_i$ lies inside a ball in $V$, it must intersect any meridianal disc an even number of times.

Similarly, no innermost curve of $T\cap P$ encloses exactly one boundary component of $J_i$ and one or two boundary components of other link components $J_j$ with $j\neq i$. For if not, fill in all such $J_j$, $j\neq i$. Again we obtain a curve on $T\cap P$ bounding an annulus disjoint from $J_i$, with its other boundary component a meridian of $N(J_i)$. So as above $J_i$ meets a meridian of the solid torus $V$ in a single point, which is a contradiction.

We conclude that each innermost curve $\gamma$ of $T\cap P$, bounding a punctured disc in $P$ disjoint from $T$, must meet boundary components of $P$ in pairs: if it meets one corresponding to $J_i$ or $T(p,q)$, then it must meet the other. It follows that there is some curve $\eta$ of $T\cap P$ bounding a punctured disc of $P$ disjoint from $T$, and also disjoint from both boundary components corresponding to $T(p,q)$. Then each of the boundary components on $P$ in the punctured disc correspond to meridians of the $J_i$, and thus $\eta$ bounds a disc in $S^3$. It follows that $\eta$ is a meridian of $V$. Because each curve of $T\cap P$ is parallel on $T=\bdy V$, each curve of $T\cap P$ bounds a meridian of $V$. 

Now let $\gamma$ be an innermost curve of $T\cap P$, bounding a subsurface of $P$ that is disjoint from $T$. 
Suppose first that another curve $\zeta$ of $T\cap P$ is parallel to $\gamma$ on $P$, so that $\gamma$ and $\eta$ bound an annulus $A$ on $P$ with interior disjoint from $T$. Then $\gamma$ and $\zeta$ are disjoint meridians of $V$. Construct a new torus $T'$ by replacing an annulus of $T$ between $\gamma$ and $\zeta$ with $A$. Note that one side of $T'$ still contains $V$, with a meridian of $\bdy V$ contained in the annulus of $T$ in $T'$. Thus the side of $T'$ containing $V$ bounds a solid torus $W$ in $S^3$, and $T(p,q)\cup J_i$ lie in balls in $W$ for all $i$. 

If $T'$ is compressible to the outside of $W$, then it must be unknotted in $S^3$. The curve bounding a compressing disc $D$ on $T'$ can be isotoped in $S^3-T'$ to meet the meridians $\gamma$ and $\zeta$ of $V$ exactly once each. Thus $\bdy D$ consists of an arc on $T$ and an arc on $P$. A small neighbourhood of $D$ is a ball in $S^3-(T(p,q)\cup J_{0,a}\cup J_{0,b})$. Isotope the arc of $\bdy D$ in $T$ through this ball to $P$ and slightly past. This reduces the number of components of intersection $T\cap P$, contradicting minimality of $T\cap P$. Thus we may assume that $T'$ is incompressible to the outside of $W$. Then $T'$ satisfies the hypotheses of this lemma. But observe that by isotoping $T'$ slightly off of the annulus $A\subset P$, the torus $T'$ is a torus satisfying the hypotheses of the lemma, but meeting $P$ fewer times. This contradicts minimality of $T\cap P$.

It follows that an innermost curve $\gamma_1$ of $T\cap P$ encircles both components of any $J_i$, and/or both components of $T(p,q)$, and there is no curve of $T\cap P$ parallel to $\gamma_1$ on $P$.

Now each component of $T\cap P$ is a nontrivial curve on $T$, and these are all disjoint. Now consider an innermost component $\eta$ of $T\cap P$ with respect to the boundary components corresponding to $T(p,q)$. That is, $\eta$ bounds a punctured subdisc of $P$ with two boundary components of lying on $\bdy N(T(p,q))$. This subdisc is isotopic into a subset of a meridian of $V$ in $S^3$. On the other side of $\eta$ on $P$, it bounds another component $P'$ of $P-V$. The surface $P'$ is a planar surface. Each of its boundary components is a meridian of $V$; hence tubes of $V$ connect boundary components of $P'$ on one side. It follows that there are an even number of boundary components of $P'$. 

Consider such a tube running from the meridian of $V$ meeting $T(p,q)$, call it $\gamma_1$, to another boundary component $\gamma_2$ of $P'$. The boundary of this tube consists of an annulus on $\bdy V=T$ with boundary $\gamma_1$ and $\gamma_2$, and two punctured discs on $P$, one meeting $T(p,q)$ twice, and one meeting some other link component(s) of $L$; both punctured discs bound discs in $V$. Perform trivial Dehn filling on all the $J_i$. This turns the boundary of the tube into an annulus in $S^3-T(p,q)$ with boundary components parallel to the boundary components of the essential annulus $F-N(T(p,q))$. Such an annulus is essential, and must be isotopic to $A=F-N(T(p,q))$.
But by a lemma of Lee \cite[Lemma~5.1]{Lee:Unknotted}, the tube must meet each component of $J_i$. But then $\gamma_1$ and $\gamma_2$ together bound discs containing all boundary components of $P$, and it follows that $\gamma_1$ and $\gamma_2$ are parallel in $P'$, bounding an annulus in $P'$, contradicting the work above. 
\end{proof}

\begin{lemma}\label{Lem:TwoJ}
Suppose $1<q<p$ and $p,q$ are relatively prime integers, and $a,b$ are distinct integers that are not multiples of $q$. Then
the link complement  $S^3-(T(p,q)\cup J_{0,a}\cup J_{0,b})$
is atoroidal.
\end{lemma}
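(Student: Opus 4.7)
Assume for contradiction that $T$ is an essential torus in $S^3\setminus L$, where $L = T(p,q)\cup J_{0,a}\cup J_{0,b}$ with $a,b$ distinct and neither a multiple of $q$. Every torus in $S^3$ bounds a solid torus on at least one side, so let $V$ be such a solid torus with $T=\bdy V$, and set $W = S^3\setminus\mathrm{int}(V)$. My plan is to reduce to the hypothesis of \reflem{ComponentsInBall}: I aim to show that $L\subset V$ and that each two-component sublink $T(p,q)\cup J_{0,a}$ and $T(p,q)\cup J_{0,b}$ lies in a ball inside $V$, at which point the essentiality of $T$ directly contradicts \reflem{ComponentsInBall}.

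The main tool is trivial Dehn filling on one of the unknots, which produces a two-component link $T(p,q)\cup J_{0,\ast}$; by \refthm{LeeWithConverse} the resulting complement is hyperbolic and in particular atoroidal, so $T$ must become inessential after the filling. First I would rule out any configuration in which $T$ separates $T(p,q)$ from one of the unknots. Say $T(p,q)\in V$ and $J_{0,a}\in W$; after trivially filling $J_{0,b}$, the torus $T$ must be compressible or boundary parallel in the hyperbolic complement $S^3\setminus(T(p,q)\cup J_{0,a})$. A boundary parallelism either descends to a parallelism in $S^3\setminus L$ (contradicting essentiality), or the solid torus filled in for $J_{0,b}$ sits inside the parallelism region, giving a pair-of-pants times $S^1$ cobounded by $T$, $\bdy N(J_{0,b})$, and one of the other boundary tori; in the compressible case the compressing disc in the filled manifold must meet the filled solid torus, giving a disc in $S^3\setminus L$ with boundary on $T$ punctured by $J_{0,b}$. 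In each case I would use that $J_{0,b}$ has linking number $b$ with $T(p,q)$, which is nonzero and not a multiple of $q$, to exclude the configuration via its interaction with the would-be meridian of $V$.

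Once all three components of $L$ are on the same side, $L\subset V$, I would then show each pair $T(p,q)\cup J_{0,a}$ and $T(p,q)\cup J_{0,b}$ lies in a ball in $V$. Filling $J_{0,b}$ trivially again yields the hyperbolic complement of $T(p,q)\cup J_{0,a}$, in which $T$ must be compressible or boundary parallel. Boundary parallelism would force $V$ into a very restricted position (e.g., a regular neighbourhood of $T(p,q)$ or of $J_{0,a}$), and a short analysis using linking numbers with $T(p,q)$ rules this out because $b$ is not a multiple of $q$. The remaining possibility is a compressing disc in the filled manifold, which is a meridianal disc of $V$ disjoint from $T(p,q)\cup J_{0,a}$, exhibiting that sublink inside a ball in $V$. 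A symmetric argument, trivially filling $J_{0,a}$ instead, places $T(p,q)\cup J_{0,b}$ inside a ball in $V$. This puts us exactly in the setting of \reflem{ComponentsInBall}, yielding the desired contradiction. The main obstacle is the intricate case analysis of the first step, tracking how $T$ becomes boundary parallel or compressible under Dehn filling and systematically using the hypothesis that $a$ and $b$ are not multiples of $q$ to rule out each configuration; this delicate bookkeeping is where the proof is most technical.
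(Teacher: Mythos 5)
Your overall architecture is the same as the paper's: forget one unknot at a time, apply \refthm{LeeWithConverse} to conclude that $T$ is compressible or boundary parallel in each two-component sublink complement, and funnel the surviving configuration into \reflem{ComponentsInBall}. The genuine gap is in your first step, where you claim to exclude every configuration in which $T$ separates $T(p,q)$ from one of the unknots using only that the linking numbers $a,b$ are nonzero and not multiples of $q$. Those hypotheses cannot suffice, because they are equally satisfied when $a=b$, and in that case the configuration you are trying to exclude actually occurs: take $T$ to be the boundary of a solid torus containing the two parallel unknots as cores. That torus is essential in the three-component complement, it separates $T(p,q)$ from both unknots, and after trivially filling $J_{0,b}$ it is boundary parallel to $\bdy N(J_{0,a})$ with the filled solid torus sitting inside the parallelism region --- exactly the sub-case you wave at. So no argument ``via the interaction of the linking number with the would-be meridian of $V$'' can kill it; you must use $a\neq b$, which your outline never invokes. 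The paper disposes of precisely this case by noting that such a torus would be boundary parallel to both $J_{0,a}$ and $J_{0,b}$ on the same side, so the solid torus it bounds would have linking number both $a$ and $b$ with $T(p,q)$, contradicting distinctness; the other boundary-parallel case (core $T(p,q)$) is excluded using knottedness of $T(p,q)$, not non-divisibility. (Your aside that the complement of $J_{0,b}$ in the parallelism region is a pair of pants times $S^1$ is also unjustified: $J_{0,b}$ need not be a fibre-like curve in that product.)

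Two smaller points. First, in your last step the boundary-parallel alternative is again misattributed to non-divisibility; once $L\subset V$ it is impossible for the simpler reason that the parallelism region between $T=\bdy V$ and $\bdy N(T(p,q))$ (or $\bdy N(J_{0,a})$) is $V$ minus a neighbourhood of that component and hence contains the other component of the sublink. With that fix, your observation that the compressing disc must then be a meridian disc of $V$ disjoint from the sublink, yielding the ball, is correct and clean. Second, the paper does not separate ``all components on one side'' from ``each pair in a ball'': both come out of one surgery-along-the-compressing-disc argument, and the separating alternatives for the two pairs are excluded because they are mutually incompatible with each other and with the all-inside alternative. Adopting that route would let you avoid the problematic first step entirely, but you would still need the distinctness of $a$ and $b$ (and the nontriviality of $T(p,q)$) to handle the boundary-parallel cases before reaching the compressible--compressible situation.
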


\begin{proof}
Suppose not. Suppose that $S^3-T(p,q)\cup J_{0,b}\cup J_{0,a}$ admits an essential torus $T$.

Since $b$ is not a multiple of $q$, by \refthm{LeeWithConverse} the inclusion of $T$ into $S^3-(T(p,q)\cup J_{0,b})$ is inessential. It follows that the inclusion of $T$ into $S^3-(T(p,q)\cup J_{0,b})$ is either compressible or boundary parallel. Similarly, the projection of $T$ to $S^3-(T(p,q)\cup J_{0,a})$ is compressible or boundary parallel, by \refthm{LeeWithConverse}. We rule out each of these possibilities.

If $T$ is boundary parallel in $S^3-(T(p,q)\cup J_{0,a})$, then it bounds a solid torus $V$ in $S^3$ containing one of $T(p,q)$ or $J_{0,a}$, with that link component isotopic to a longitude on $\bdy V$. Because $T$ is essential in the complement of the full link $T(p,q)\cup J_{0,a}\cup J_{0,b}$, $V$ must also contain $J_{0,b}$.

Suppose first that $V$ contains $T(p,q)$ and $J_{0,b}$, and $T(p,q)$ is the core of $V$. That is, $T=\bdy V$ is boundary parallel to the boundary of a regular neighbourhood of $T(p,q)$. Project to $S^3-(T(p,q)\cup J_{0,b})$. The torus $T$ cannot be boundary parallel in that link complement because it bounds a solid torus containing both link components. Thus since it is inessential in $S^3-(T(p,q)\cup J_{0,b})$, it is compressible in $S^3-(T(p,q)\cup J_{0,b})$. Because $T(p,q)$ is the core of $V$, a compressing disk for $T$ lies on the side of $V$ that does not contain $T(p,q)$ and $J_{0,b}$. But then $V$ must be unknotted in $S^3$. This contradicts the fact that $T(p,q)$ is the core of $V$.

Suppose then that $V$ contains $J_{0,a}$ and $J_{0,b}$, and $J_{0,a}$ is the core of $V$. We know that $T$ is compressible or boundary parallel in $S^3-(T(p,q)\cup J_{0,b})$. If $T$ were compressible, then since $J_{0,b}$ lies inside $V$, $J_{0,b}$ would have linking number zero with $T(p,q)$, which is a contradiction. Thus it is boundary parallel in $S^3-(T(p,q)\cup J_{0,b})$. It is not boundary parallel to a regular neighbourhood of $T(p,q)$, since both $J_{0,a}$ and $J_{0,b}$ lie on the opposite side of $T$ in $V$, and so then $T$ would remain boundary parallel to a regular neighbourhood of $T(p,q)$ in the larger link $S^3-(T(p,q)\cup J_{0,a}\cup J_{0,b})$, contradicting the fact that it is essential there. So $T$ is boundary parallel to $J_{0,b}$ and to $J_{0,a}$, on the same side. The solid torus in $S^3$ containing $J_{0,b}$ and $J_{0,a}$ therefore has linking number $b$ with $T(p,q)$, and has linking number $a$ with $T(p,q)$. Since $b\neq a$, this is a contradiction.

So $T$ must be compressible in $S^3-(T(p,q)\cup J_{0,a})$. Similarly, 
$T$ cannot be boundary parallel in $S^3-(T(p,q)\cup J_{0,b})$. So $T$ is also compressible in $S^3-(T(p,q)\cup J_{0,b})$.

A compressing disc $D$ for $T$ in $S^3-(T(p,q)\cup J_{0,a})$ must intersect $J_{0,b}$. Surger $T$ along $D$ to obtain a sphere. By irreducibility of $S^3-(T(p,q)\cup J_{0,a})$, this sphere bounds a ball disjoint from $T(p,q)\cup J_{0,a}$. If the disc $D$ lies on the outside of the ball, then $T$ contains the ball, $D$, and the link component $J_{0,b}$ on one side (a solid torus), and contains $T(p,q)\cup J_{0,a}$ on the other side. If the disc $D$ lies on the inside of the ball, then after undoing the surgery along $D$, the ball becomes a knot complement in $S^3$ with boundary $T$, and $J_{0,b}$, $T(p,q)$, and $J_{0,a}$ all lie in the solid torus $V$ on the opposite side of $T$.

Apply the above argument to $T(p,q)\cup J_{0,b}$. Again $J_{0,a}$ must intersect a compressing disc for $T$ in $S^3-(T(p,q)\cup J_{0,b})$ and so again either $T$ separates $J_{0,a}$ from $T(p,q)\cup J_{0,b}$ or all link components lie in a solid torus. Because $T$ cannot separate $T(p,q)$ from both $J_{0,a}$ and $J_{0,b}$, we must be in the latter case. So $T(p,q)$, $J_{0,a}$, and $J_{0,b}$ lie in a knotted solid torus in $S^3$, with $T(p,q)\cup J_{0,a}$ lying in one ball inside that solid torus, and $T(p,q)\cup J_{0,b}$ lying in another ball inside that solid torus. This gives a contradiction to \reflem{ComponentsInBall}.
\end{proof}

\begin{proposition}\label{Prop:Atoroidal}
Let $p$ and $q$ be relatively prime integers such that $1<q<p$, and let $r_1, \dots, r_n$ be integers such that $1<r_1<\dots<r_n<p$, and suppose that no $r_i$ is a multiple of $q$. Then $T(p,q) \cup J_{0,r_1} \cup \dots \cup J_{0,r_n}$ is atoroidal. 
\end{proposition}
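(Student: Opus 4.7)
The plan is to proceed by induction on $n$. The base cases are $n=1$, which is an immediate consequence of \refthm{LeeWithConverse} (under the hypothesis that no $r_i$ is a multiple of $q$, the link is hyperbolic and thus atoroidal), and $n=2$, which is \reflem{TwoJ}. So assume the result for all smaller values, and let $L = T(p,q)\cup J_{0,r_1}\cup\dots\cup J_{0,r_n}$ with $n\geq 3$. Suppose for contradiction that $T\subset S^3-L$ is an essential torus.

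For each index $i$, set $L_i = L - J_{0,r_i}$. By the inductive hypothesis, $S^3-L_i$ is atoroidal, and a direct extension of \reflem{Irreducible} shows it is irreducible and boundary-irreducible. Hence $T$, viewed in $S^3-L_i$, must be either compressible or boundary parallel.

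The first stage is to rule out the boundary-parallel case for every $i$. Suppose $T$ were parallel via a product region $W\cong T^2\times I$ in $S^3-L_i$ to $\partial N(C)$ for some component $C\in L_i$; essentiality of $T$ in $S^3-L$ forces $J_{0,r_i}\subset W$. If $C=T(p,q)$, then projecting to $S^3-(T(p,q)\cup J_{0,r_i})$---which is hyperbolic by \refthm{LeeWithConverse}---the torus $T$ must be compressible (it is no longer boundary parallel there, since the solid torus $V$ bounded by $T$ contains both remaining link components). Because the core $T(p,q)$ of $V$ is a nontrivial knot, $T$ is incompressible on the inside of $V$, so the compressing disc lies outside $V$, forcing $V$ to be unknotted in $S^3$ and contradicting the nontriviality of $T(p,q)$. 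If instead $C=J_{0,r_j}$ for some $j\neq i$, then a comparison with the corresponding dichotomy for $T$ in $S^3-L_j$, together with uniqueness of the solid torus bounded by $T$ and the linking-number obstruction that each $J_{0,r_k}$ has linking number $r_k\neq 0$ with $T(p,q)$, produces a contradiction. The careful bookkeeping across varying choices of the distinguished component $C$ and of the distinguished index is the principal technical obstacle.

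Having dispensed with boundary parallelism, $T$ is compressible in each $S^3-L_i$ via a disc $D_i$ that must intersect $J_{0,r_i}$ (otherwise $D_i$ would already compress $T$ in $S^3-L$, contradicting essentiality). Surgering $T$ along $D_i$ produces a 2-sphere, which by irreducibility bounds a 3-ball in $S^3-L_i$. Exactly as in the proof of \reflem{TwoJ}, exactly one of two sub-cases occurs: either \emph{(Option 1)} $T$ separates $J_{0,r_i}$ alone from $L-J_{0,r_i}$ in $S^3$, or \emph{(Option 2)} $T$ bounds a knotted solid torus $V$ in $S^3$ containing all of $L$, with $L_i$ sitting inside a 3-ball in $V$. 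Option 1 can hold for at most one index, since a single essential torus cannot realise two distinct separations of a fixed link, and Option 1 for some index $i_0$ is incompatible with Option 2 for any different index $j$ (the former places $J_{0,r_{i_0}}$ on the opposite side of $T$ from the rest of $L$, while the latter places all of $L$ on one side of $T$). So Option 2 must hold for every $i$, and the solid torus $V$ is uniquely determined by $T$, independent of $i$. But then for each $k$, the sublink $T(p,q)\cup J_{0,r_k}\subset L_i$ (for any $i\neq k$) lies in a 3-ball inside $V$, contradicting \reflem{ComponentsInBall} and completing the proof.
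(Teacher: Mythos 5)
Your overall strategy---induction on $n$, reducing to the sublinks $L_i = L - J_{0,r_i}$, and finishing with \reflem{ComponentsInBall}---is sound, and it genuinely differs from the paper's argument, which is not inductive: the paper instead applies \reflem{TwoJ} to every three-component sublink $T(p,q)\cup J_{0,r_i}\cup J_{0,r_j}$ and splits into the cases ``$T$ compressible in all of these complements'' versus ``$T$ boundary parallel in one of them''. Your compressibility stage is complete and correct, and it funnels into \reflem{ComponentsInBall} at least as cleanly as the paper's Case~1; the case $C=T(p,q)$ of your boundary-parallel stage is also correct and mirrors the corresponding step inside the proof of \reflem{TwoJ}.

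The gap is the sub-case you explicitly defer: $T$ boundary parallel in $S^3-L_i$ to $\partial N(J_{0,r_j})$ with $j\neq i$. The obstruction you cite there (``each $J_{0,r_k}$ has linking number $r_k\neq 0$ with $T(p,q)$'') is not what closes this case. In this situation $T$ bounds an unknotted solid torus $V$ with core $J_{0,r_j}$, containing $J_{0,r_i}$, with all other components of $L$ outside $V$. Passing to $S^3-L_j$, compressibility of $T$ is indeed excluded by $\operatorname{lk}(J_{0,r_i},T(p,q))=r_i\neq 0$ (whether the compressing disc is a meridian disc of $V$ or lies outside the unknotted $V$); but in the boundary-parallel alternative you must (a) rule out parallelism to a component lying outside $V$, which works only because for $n\geq 3$ the product region between $T$ and that component would have to contain yet another component of $L_j$, and (b) in the remaining possibility---parallelism to $J_{0,r_i}$ inside $V$---derive the contradiction that the core of $V$ would have linking number both $r_j$ and $r_i$ with $T(p,q)$, which requires the \emph{distinctness} $r_i\neq r_j$, exactly the device used in the second half of the proof of \reflem{TwoJ}. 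So the missing case is fillable, essentially by rerunning that part of \reflem{TwoJ} in the presence of the extra components, but as written it is asserted rather than proved, and the obstruction you name would not by itself produce the contradiction.
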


\begin{proof}
Suppose by way of contradiction that $T$ is an essential torus in the complement of $K=T(p,q)\cup J_{0,r_1}\cup \dots \cup J_{0,r_n}$. If $n=1$, then by \refthm{LeeWithConverse}, $r_1$ must be a multiple of $q$, giving an immediate condradiction. So assume $n>1$. By \refthm{LeeWithConverse}, $T$ cannot be essential when projected into $S^3-(T(p,q)\cup J_{0,r_k})$ for any $k$. Moreover, \reflem{TwoJ} implies $T$ cannot be essential when projected into $T(p,q)\cup J_{0,r_k}\cup J_{0,r_j}$ for any $r_j\neq r_k$. Thus if $n=2$, we have a contradiction.

If $n>2$, then $T$ is either compressible or boundary parallel in each of the link complements $S^3-(T(p,q)\cup J_{0,r_k}\cup J_{0,r_j})$ by \reflem{TwoJ}. We consider separately the cases that $T$ is compressible in every one of these link complements, and the case that $T$ is boundary parallel in one of them.

\smallskip

\underline{Case 1.}
Suppose first that $T$ is compressible in every one of these link complements. 
Surgering $T$ along a compressing disc $D$ for $T$ in $T(p,q)\cup J_{0,r_k}\cup J_{0,r_j}$ gives a sphere, which, by irreducibility, bounds a ball $B$ disjoint from the 3-component link. There are two cases: If $D$ lies on the outside of $B$, then $T$ bounds a solid torus disjoint from $T(p,q)\cup J_{0,r_k}\cup J_{0,r_j}$. If $D$ lies on the inside of $B$, then $T$ bounds a solid torus that contains $T(p,q)\cup J_{0,r_k}\cup J_{0,r_j}$.

Suppose the first case happens for a link $T(p,q)\cup J_{0,r_1}\cup J_{0,r_2}$, with compressing disc $D$ lying outside the ball disjoint from the 3-component link. Because $T$ is incompressible in the full link complement, some other component $J_{0,r_3}$ must intersect $D$. Thus it lies on the opposite side of $T$ from $T(p,q)$. But then consider $T(p,q)\cup J_{0,r_1}\cup J_{0,r_3}$. Surgering along a compression disc for $T$ in this link complement gives a sphere with $T(p,q)\cup J_{0,r_1}$ lying on one side and $J_{0,r_3}$ on the other, contradicting irreducibility of the 3-component link complement. 

Thus the second case happens for all choices of links $T(p,q)\cup J_{0,r_k}\cup J_{0,r_j}$, i.e.\ the 3-component link lies inside a ball within a solid torus $V$ bounded by $T$. But this is a contradiction of \reflem{ComponentsInBall}.

\smallskip

\underline{Case 2.}
It follows that $T$ is boundary parallel in some $S^3-(T(p,q)\cup J_{0,r_i}\cup J_{0,r_j})$. Thus $T$ bounds a solid torus $V \subset S^3$ containing either $T(p,q)$ or one of the $J_{0,r_k}$, for $k=i,j$, on one side, with $\bdy V$ parallel to the link component it bounds. Suppose $T$ is boundary parallel to $T(p,q)$. Because $T$ is essential in the complement of $K$, the solid torus $V$ must contain another link component $J_{0,r_\ell}$. By \reflem{TwoJ}, $T$ is inessential in the complement of $T(p,q)\cup J_{0,r_k}\cup J_{0,r_\ell}$, and $T(p,q)$ lies on one side of $T$ and $J_{0,r_k}$ on the other. If $T$ is compressible, then by surgering $T$ along a compressing disc, we obtain a sphere containing $T(p,q)$ on one side and $J_{0,r_k}$ on the other, implying that the linking number between $T(p,q)$ and $J_{0,r_k}$ is zero, a contradiction. Hence $T$ cannot be compressible in this link complement.

Hence it must be boundary parallel here.
Because $T(p,q)$ and $J_{0,r_\ell}$ lie in $V$, it cannot be boundary parallel on that side. Hence it is boundary parallel to $J_{0,r_k}$ on its opposite side. This implies that $T$ is the boundary of a neighbourhood of the unknot and therefore $T(p,q)$ is unknotted, which is a contradiction.

So $T$ is boundary parallel to $J_{0,r_k}$ for $k=i$ or $j$. Therefore, $T$ is trivial. Again because $T$ is essential in the complement of $K$, the solid torus $V$ must
contain another $J_{0,r_\ell}$. Then $T$ is inessential in $S^3-(T(p,q)\cup J_{0,r_k}\cup J_{0,r_\ell})$ by \reflem{TwoJ}, but contains $J_{0,r_\ell}$ and $J_{0,r_k}$ in 
a solid torus on one side, and $T(p,q)$ on the other. The wrapping numbers of $J_{0,r_\ell}$ and $J_{0,r_k}$ in $V$ are non-zero, and the wrapping number
of $T(p,q)$ in $S^3-V$ is also non-zero, otherwise the linking number between $T(p,q)$ and $J_{0,r_\ell}$ or $J_{0,r_k}$ would be zero, a contradiction. 
Thus, $T$ is incompressible in $S^3-(T(p,q)\cup J_{0,r_k}\cup J_{0,r_\ell}).$ As $T$ can't be boundary parallel to $T(p,q)$, $T$ would be essential 
in $S^3-(T(p,q)\cup J_{0,r_k}\cup J_{0,r_\ell})$, a contradiction.
\end{proof}

\begin{proposition}\label{Prop:Annular}
Let $p,q$ be relatively prime integers with $1<q<p$, and let $r_1, \dots, r_n$ be integers such that $1<r_1<\dots<r_n<p$ for $j=1,\dots,n$. Suppose no $r_j$ is a multiple of $q$. Then 
$K=T(p,q)\cup J_{0,r_1}\cup \dots \cup J_{0,r_n}$ is anannular. 
\end{proposition}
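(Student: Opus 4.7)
The plan is to extend the inductive framework of \refprop{Atoroidal} from essential tori to essential annuli. Suppose for contradiction that $A$ is an essential annulus properly embedded in $S^3-K$, with $\partial A = c_1 \cup c_2$ lying on components $L_1, L_2$ of $K$ (possibly $L_1 = L_2$). Since $A$ lies in $S^3-N(K)$, it is automatically disjoint from neighbourhoods of components of $K$ not meeting $\partial A$, so I can view $A$ as a properly embedded annulus in the complement of the at most two-component sublink $L_1 \cup L_2$. By \refthm{LeeWithConverse}, each two-component sublink $T(p,q) \cup J_{0,r_k}$ has hyperbolic, hence anannular, complement. So when $L_1 \cup L_2 \subset T(p,q) \cup J_{0,r_k}$ for some $k$, the annulus $A$ must be inessential in this sublink complement: compressible, boundary compressible, or boundary parallel. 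In each subcase I would follow the pattern of \reflem{TwoJ} and \reflem{ComponentsInBall}: a compressing or boundary compressing disc for $A$ in the sublink complement cannot lie entirely in $S^3-K$, so it is pierced by some other $J_{0,r_j}$; surgering $A$ along the disc then produces either a sphere that by \reflem{Irreducible} bounds a ball whose placement contradicts the linking number $\mathrm{lk}(T(p,q), J_{0,r_j}) = r_j \neq 0$, or a simpler surface reducing to an earlier case. Boundary parallelism of $A$ is handled as in \reflem{ComponentsInBall}: the parallelism solid torus must contain some $J_{0,r_j}$ to prevent $A$ from being boundary parallel in $S^3-K$, and the resulting wrapping number and slope constraints are incompatible with the unknottedness of $J_{0,r_j}$ and its nonzero linking with $T(p,q)$.

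The remaining case is that $\partial A$ avoids $T(p,q)$ entirely, so $c_1, c_2$ lie on $\partial N(J_{0,r_i}) \cup \partial N(J_{0,r_j})$ (possibly with $i=j$). Since the $J_{0,r_i}$ are pairwise unlinked unknots by construction in \refdef{AugComponent}, capping off the boundary circles of $A$ by appropriate meridian discs, or by a parallel annulus on $\partial N(J_{0,r_i})$ in the non-meridional case with $i = j$, produces a sphere or torus in $S^3$ disjoint from $T(p,q)$. Applying \reflem{Irreducible} and \refprop{Atoroidal} to this capped surface, and tracking which side $T(p,q)$ lies on, then yields a contradiction with $\mathrm{lk}(T(p,q), J_{0,r_i}) = r_i \neq 0$.

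The principal obstacle will be the subcase in which both $c_1, c_2$ lie on $\partial N(T(p,q))$. Here the torus knot exterior admits the Seifert cabling annulus of slope $pq$ as its essential annulus, and $A$ could a priori be isotopic to this cabling annulus when viewed in $S^3 - T(p,q)$. Ruling this out requires arguing that the cabling annulus of $T(p,q)$ cannot be realised disjointly from the components $J_{0,r_i}$: any such realisation would either force some $J_{0,r_i}$ onto the wrong side of the Heegaard torus $F$ containing $T(p,q)$, contradicting the construction in \refdef{AugComponent}, or produce a configuration incompatible with \refprop{Atoroidal} or \reflem{ComponentsInBall}. This geometric analysis, balancing the boundary slope on $\partial N(T(p,q))$ against the combinatorics of the $J_{0,r_i}$ on $F$, is the most delicate part of the argument.
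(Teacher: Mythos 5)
Your overall case division by the location of $\bdy A$ matches the paper's strategy, and your treatment of the case where the two boundary circles lie on distinct components $J_{0,r_i}$, $J_{0,r_j}$ via linking numbers is essentially the paper's argument. But the case you yourself flag as the principal obstacle --- both boundary circles on $\bdy N(T(p,q))$, where $A$ could a priori be the cabling annulus of the torus knot --- is exactly where your proposal stops short of a proof, and the mechanism you suggest does not work as stated. You cannot derive a contradiction from some $J_{0,r_i}$ being forced ``onto the wrong side of the Heegaard torus $F$'', because by \refdef{AugComponent} every $J_{0,r_i}$ already intersects $F$ (in two points of the annulus $F\cap B$), so the construction imposes no ``side'' constraint to violate; moreover $A$ is an arbitrary essential annulus, with no a priori relation to $F$, so positional data about the original configuration gives nothing directly. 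What is actually needed is (i) Tsau's theorem \cite{Tsau} that the exterior of a torus knot contains a unique essential annulus up to isotopy, and (ii) Lee's Lemma~5.1 of \cite{Lee:Unknotted}, which shows that this annulus (isotopic to $F-N(T(p,q))$) must be punctured by each $J_{0,r_j}$. Together these force $A$ to be inessential in $S^3-N(T(p,q))$, after which the compressible/boundary-parallel analysis runs (with \refthm{LeeWithConverse} excluding the essential torus that would arise from the parallelism region). Your proposal neither cites these inputs nor supplies a substitute argument, so the central case remains open.

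A secondary gap: for the mixed case ($\bdy A$ meeting both $\bdy N(T(p,q))$ and some $\bdy N(J_{0,r_k})$), the paper simply invokes Lee's Lemma~5.2 of \cite{Lee:Unknotted}, which says no such annulus exists at all in $S^3-N(T(p,q)\cup J_{0,a})$ for $a\neq p,q$. Your alternative --- declaring $A$ inessential in the hyperbolic sublink complement and then surgering along compressing or boundary-compressing discs pierced by other components $J_{0,r_j}$ --- is only a sketch: \reflem{TwoJ} and \reflem{ComponentsInBall} are statements about tori, and adapting their disc-swapping and innermost-curve arguments to an annulus spanning two distinct boundary tori (where a boundary compression produces a disc rather than a sphere or torus) would have to be carried out explicitly. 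As written, the proposal identifies the correct skeleton but leaves the decisive steps unproven.
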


\begin{proof}
By work of Lee~\cite[Lemma~5.2]{Lee:Unknotted}, there is no annulus embedded in $S^3-N(T(p,q)\cup J_{0,a})$ with one boundary component on $\bdy N(T(p,q))$ and one on $\bdy N(J_{0,a})$, for any $a\neq p,q$. 

We now show that if $J_{0,a} \neq J_{0,b}$, then there can be no annulus embedded in $S^3-N(K)$ with one boundary component on $\bdy N(J_{0,a})$ and one on $\bdy N(J_{0,b})$. Because $J_{0,a}$ and $J_{0,b}$ are unknots, any such annulus $A$ would have one boundary component $\bdy_1 A$ a torus knot on $\bdy N(J_{0,a})$ and the other boundary component $\bdy_2 A$ a torus knot on $\bdy N(J_{0,b})$, and these torus knots would be ambient isotopic. Because $N(J_{0,a})$ and $N(J_{0,b})$ have linking number zero, $\bdy_1 A\subset \bdy N(J_{0,a})$ has linking number zero with $N(J_{0,b})$. It follows that $\bdy_2 A$, since it is ambient isotopic to $\bdy_1 A$, is a longitude of $\bdy N(J_{0,b})$. Similarly, $\bdy_1 A$ is a longitude of $\bdy N(J_{0,a})$. Thus each boundary component of the annulus $A$ is a longitude, and $J_{0,a}$ and $J_{0,b}$ are isotopic through this annulus in $S^3-K$. This is a contradiction: $J_{0,a}$ has linking number $a$ with $T(p,q)$, and $J_{0,b}$ has linking number $b$ with $T(p,q)$, and $a\neq b$. 

Thus any essential embedded annulus $A$ in the exterior of $K$ has both boundary components on the same link component.

Consider first that $\bdy A \subset \bdy N(T(p, q))$. The annulus $A$ is not essential in $S^3-N(T(p,q))$, since the exterior of a torus knot has just one essential annulus by work of Tsau~\cite{Tsau}, and by \cite[Lemma~5.1]{Lee:Unknotted}, that essential annulus would be punctured by $J_{0,r_j}$. Thus, $A$ is compressible, boundary compressible, or boundary parallel in $S^3-N(T(p,q))$. Observe that a boundary compressible annulus is in fact boundary parallel, using the fact that $S^3-N(T(p,q))$ is irreducible. 

First suppose $A$ is boundary parallel to an annulus $B$ in $S^3-N(T(p,q))$.  Then $A\cup B$ bounds a solid torus $V$ in $S^3-N(T(p,q))$.
Since  $A$ is not boundary parallel in $S^3-K$, at least one $J_{0,a}$ must be inside $V$. In addiction, $J_{0,a}$ has wrapping number greater than zero in $V$ otherwise $T(p, q)$  and $J_{0,a}$ would have linking number equal to zero, a contradiction. Suppose $\bdy V$ is boundary parallel to $J_{0,a}$. If $\bdy V \cap \bdy N(T(p,q))$ is a meridional annulus on $N(T(p,q))$, then $J_{0,a}$ would have linking number one with $T(p,q)$, which is impossible by choice of $a$. If $\bdy V \cap \bdy N(T(p,q))$ is not meridional, then $V$ forms a nontrivial knot in $S^3$, and $J_{0,a}$ lies at its core, contradicting the fact that $J_{0,a}$ is trivial. So $\bdy V$ is not boundary parallel to $J_{0,a}$. It follows that the wrapping number of $J_{0,a}$ is strictly greater than one. But then $\bdy V$ is an essential torus in $S^3-(T(p,q)\cup J_{0,a})$, contradicting \refthm{LeeWithConverse}. 

Assume now that $A$ is compressible in $S^3-N(T(p,q))$. Then, there is a compression disk $D$ for $A$ in $S^3-N(T(p,q))$. Surgering $A$ along $D$ yields two discs, $D_1$ and $D_2$, such that $\partial A = \partial D_1 \cup \partial D_2$.
Since $S^3-N(T(p,q))$ is boundary irreducible, $\partial D_i$ bounds a disk $E$ on $\partial N(T(p,q))$. Thus, by pushing $E$ slightly off of $\bdy N(L)$ in $S^3-N(K)$, we obtain a compressing disc for $A$ in $S^3-N(K)$, which contradicts our assumption that $A$ is essential. Therefore $A$ is not boundary parallel. It follows that $A$ cannot have both boundary components on $\bdy N(T(p,q))$.
  
Consider now that $A$ has both boundary components on $\bdy N(J_{0,a})$ for some $a=r_i$. Since the solid torus has no essential annuli, $A$ is not essential in $S^3-N(J_{0,a})$. Again observe that $A$ is either compressible or boundary parallel in $S^3-N(J_{0,a})$. 

Suppose $A$ is boundary parallel to an annulus $B$ on $\bdy N(J_{0,a})$ in $S^3-J_{0,a}$. Then $A\cup B$ bounds a solid torus $V$ in $S^3-J_{0,a}$. Since $A$ is not boundary parallel in $S^3-N(K)$, at least one component of $K$ must be inside $V$. Consider first that $J_{0,b} \subset V$ for some $b$. If $T(p,q)$ is not contained in $V$, then expanding a neighbourhood of $J_{0,a}$ to contain $V$ gives a trivial solid torus containing $J_{0,a}$ and $J_{0,b}$. By consideration of linking number and wrapping numbers, its boundary $\bdy V$ must be essential in $S^3-(T(p,q)\cup J_{0,a}\cup J_{0,b})$, contradicting \refprop{Atoroidal}. Assume then that $T(p, q)$ is contained in $V$. The core of $V$ can be isotoped to lie on the boundary of a neighbourhood of the unknot $J_{0,a}$, hence it forms a torus knot. Observe that the linking number of any curve $C$ outside of $V$ with $T(p,q)$ is then obtained by multiplying the linking number of $C$ with $J_{0,a}$, the linking number of the core of $V$ with $J_{0,a}$, and the winding number of $T(p,q)$ within $V$. 
But any curve $J_{0,c}$ has zero linking number with $J_{0,a}$, but nonzero linking number with $T(p,q)$. Thus all other components of $K$ must lie within $V$. 
If $T(p,q)$ has wrapping number zero in $V$, then the linking number of $T(p,q)$ and $J_{0,a}$ would be zero, which is not possible. Furthermore, the core of $V$ cannot be isotopic to a simple longitude of $N(J_{0,a})$, for the same reason. If the core of $V$ is isotopic to a meridian of $N(J_{0,a})$ then because all link components lie inside $V$, a compressing disc for the solid torus $S^3-(N(J_{0,a})\cup V)$ is a boundary compression disc for $A$ in $S^3-N(K)$, contradicting our assumption that $A$ is essnential in $S^3-N(K)$. 
But then the core of $V$ is a nontrivial torus knot on $\bdy N(J_{0,a})$, hence $\bdy V$ is an essential torus in $S^3-K$, contradicting \refprop{Atoroidal}.

The remaining case is that $A$ has both boundary components on $\bdy N(J_{0,a})$ and is compressible in $S^3-N(J_{0,a})$. Then there is a compression disk $D$ for $A$ in $S^3-N(J_{0,a})$. Surgering $A$ along $D$ yields two discs, $D_1$ and $D_2$, such that $\partial A = \partial D_1 \cup \partial D_2$.
If one $\partial D_i$ bounds a disk $E$ on $\bdy N(J_{0,a})$, then by considering a disc with boundary in $A$ close to $E$, we see that $A$ is also compressible in $S^3-N(K)$, a contradiction. So suppose that $\partial D_i$ does not bound a disk on $\bdy N(J_{0,a})$. Then both $\partial D_1$ and $\partial D_2$ are either isotopic to the longitude or meridian of $N(J_{0,a})$ implying that $A$ is also boundary parallel in $S^3-N(J_{0,a})$. Thus, we have a contradiction to the previous paragraph. 
\end{proof}

\begin{theorem}\label{Thm:AugHyperbolic}
Let $p,q$ be relatively prime integers with $1<q<p$. Let $a_1, \dots, a_n$ be integers such that $1<a_1<\dots< a_n<p$. Consider the link
\[ K=T(p,q)\cup J_{0,a_1}\cup \dots J_{0,a_n}. \]
\begin{enumerate}
\item If no $a_i$ is a multiple of $q$, then $K$ is hyperbolic.
\item If all $a_i>q$ are multiples of $q$, it is satellite, and remains satellite under $1/b_i$ Dehn filling on $J_{0,a_i}$, for any integers $b_i>0$, $i=1, \dots, n$. After Dehn filling, the companion and pattern are T-links.
\end{enumerate}
\end{theorem}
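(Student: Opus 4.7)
The plan is to treat the two parts of the theorem separately, drawing on the structural results already established in the section.

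For part (1), the strategy is to apply Thurston's hyperbolization theorem for Haken manifolds with toroidal boundary. Lemma \ref{Lem:Irreducible} gives that $S^3\setminus N(K)$ is irreducible and boundary-irreducible. Under the hypothesis that no $a_i$ is a multiple of $q$, Proposition \ref{Prop:Atoroidal} gives atoroidality and Proposition \ref{Prop:Annular} gives anannularity. To complete the hyperbolization argument it remains only to rule out a Seifert fibered structure. This is immediate, since any Seifert fibered 3-manifold whose boundary consists of at least two tori admits an essential vertical annulus connecting two of them, contradicting anannularity. Since $K$ has $n+1\geq 2$ components, this applies, and the complement is hyperbolic.

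For part (2), partition the indices as $I_{\mathrm{small}} = \{i : a_i < q\}$ and $I_{\mathrm{big}} = \{i : a_i > q\}$, and write $a_i = s_i q$ for $i \in I_{\mathrm{big}}$. I would apply \reflem{MultipleJ} to the subcollection $\{J_{0,a_i} : i\in I_{\mathrm{big}}\}$, isotoping each such component around the annulus $F \cap B$ away from the braid region until they form torus knots $T(s_i,1)$ on $F$. The isotopy happens inside the ball $B$, in a neighbourhood of the annulus disjoint from the braid crossings, so it leaves $T(p,q)$ and each small $J_{0,a_i}$ (which sits over the braid) untouched. Pushing $F$ slightly off the resulting configuration produces an essential torus $T$ parallel to $F$: on one side lies a solid torus containing the $T(s_i,1)$, and on the other lies a solid torus containing $T(p,q)$ together with the small $J_{0,a_i}$, around which $T(p,q)$ wraps nontrivially. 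Essentiality of $T$ is verified from wrapping and linking number considerations exactly as in \refprop{OneComponentJ}.

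For the Dehn filling statement, performing $1/b_i$ Dehn filling on each $J_{0,a_i}$ adds $b_i$ full twists to $a_i$ strands, producing a T-link whose entries with $i\in I_{\mathrm{small}}$ have the form $(a_i, a_i b_i)$ and whose entries with $i\in I_{\mathrm{big}}$ have the form $(s_i q, s_i q b_i)$, concluding with $(p,q)$. This is exactly the form of T-link addressed by \refthm{SatelliteMultTwists}, which identifies the companion and pattern as T-links and confirms that the satellite structure persists. The main technical point I expect to require care is verifying that the simultaneous isotopy of \reflem{MultipleJ} can be carried out in the presence of the small $J$-components without cross interference; this should follow from tracking each component's position in $B$ during the isotopy, using that the small components sit over the braid while the isotopy of the big components slides them around the annulus well away from the braid.
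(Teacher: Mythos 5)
Your proposal is correct and follows essentially the same route as the paper: part (1) is exactly the paper's argument (Thurston's geometrisation for Haken manifolds plus Lemma~\ref{Lem:Irreducible}, Proposition~\ref{Prop:Atoroidal} and Proposition~\ref{Prop:Annular}, with your Seifert-fibred remark a harmless addition), and part (2) is the paper's one-line appeal to Theorem~\ref{Thm:SatelliteIntro}, which you simply unpack through Lemma~\ref{Lem:MultipleJ} and Theorem~\ref{Thm:SatelliteMultTwists}. The ``cross interference'' point you flag is handled in the paper by absorbing the small components $J_{0,a_i}$, $a_i<q$, into the tangle $\tau_r$ (as in the fully augmented T-link example), which is precisely your observation that they sit over the braid and are untouched by the isotopy of the large components.
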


\begin{proof}
By Thurston's geometrisation theorem for Haken manifolds~\cite{Thurston:Bulletin}, $S^3-K$ is hyperbolic if and only if it is irreducible, boundary irreducible, atoroidal, and anannular. By \reflem{Irreducible}, it is irreducible and boundary irreducible. By \refprop{Atoroidal}, it is atoroidal if no $r_j$ is a multiple of $q$, and by \refprop{Annular} it is also anannular in this case. This proves statement~(1) on hyperbolicity.

The statement~(2) on satellite links follows from \refthm{SatelliteIntro}, since $1/b_i$ Dehn filling produces a T-link with all full twists as in the statement of that theorem. 
\end{proof}

\begin{corollary}\label{Cor:FullTwistConj}
Let $p,q$ be relatively prime integers with $1<q<p$, and let $a_1, \dots, a_n$ and $b_1, \dots, b_n$ be integers such that $1<a_1<\dots<a_n<p$ and $b_i>0$, and no $a_i$ is a multiple of $q$. Then there exists $B\gg 0$ such that if each $b_i>B$, then
$T((a_1, a_1b_1), \dots, (a_n,a_nb_n), (p,q))$ is hyperbolic.
\end{corollary}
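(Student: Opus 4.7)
The plan is to realise the T-link $T((a_1,a_1b_1),\dots,(a_n,a_nb_n),(p,q))$ as a Dehn filling of the fully augmented link $K = T(p,q) \cup J_{0,a_1} \cup \dots \cup J_{0,a_n}$ and then apply Thurston's hyperbolic Dehn surgery theorem. First I would observe that the unknot $J_{0,a_i}$ encircles exactly $a_i$ strands of the braid, and that performing $1/b_i$ Dehn surgery on $J_{0,a_i}$ introduces $b_i$ full twists on those $a_i$ strands; each such twist contributes the braid word $(\sigma_1\cdots\sigma_{a_i-1})^{a_i}$, so $b_i$ twists contribute $(\sigma_1\cdots\sigma_{a_i-1})^{a_ib_i}$, i.e.\ the pair $(a_i,a_ib_i)$. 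Since the $J_{0,a_i}$ have been arranged in increasing height so that their filling instructions commute in the appropriate sense, simultaneously performing $1/b_i$ Dehn filling on each $J_{0,a_i}$ produces exactly the T-link in the statement.

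Next I would invoke \refthm{AugHyperbolic}(1): because no $a_i$ is a multiple of $q$, the complement $S^3 \setminus K$ is hyperbolic. By Thurston's hyperbolic Dehn surgery theorem, there is a constant $B_0$ (depending on the chosen horoball neighbourhoods of the cusps of $K$) such that any Dehn filling in which the filling slope on every cusp corresponding to some $J_{0,a_i}$ has length greater than $B_0$ yields a hyperbolic 3-manifold. In the standard meridian--longitude basis on $\bdy N(J_{0,a_i})$, the slope $1/b_i$ has length tending to infinity as $b_i \to \infty$. Hence there exists $B \gg 0$ such that whenever $b_i > B$ for all $i$, every filling slope satisfies the length hypothesis, and the filled manifold, which is the complement of the desired T-link in $S^3$, is hyperbolic.

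The main subtlety is verifying that the Dehn filling description really coincides with the T-link; this is essentially bookkeeping but requires the observation that the order in which the twists appear in the T-link word is compatible with the fact that the $J_{0,a_i}$ have been stacked vertically with $a_1 < \dots < a_n$. I would cite the same convention used in the proof of \reflem{OmitR=Q} and in the derivation of \refthm{SatelliteMultTwists}, where exactly this correspondence between Dehn filling on encircling unknots and T-link entries is used. Once this is in place, the corollary follows immediately by combining \refthm{AugHyperbolic}(1) with the hyperbolic Dehn surgery theorem, and no new geometric input is needed.
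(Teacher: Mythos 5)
Your proposal is correct and follows the same route as the paper: the T-link is the $1/b_i$ filling of the augmented link $T(p,q)\cup J_{0,a_1}\cup\dots\cup J_{0,a_n}$, which is hyperbolic by \refthm{AugHyperbolic}(1) since no $a_i$ is a multiple of $q$, and Thurston's hyperbolic Dehn filling theorem then gives hyperbolicity for all sufficiently large $b_i$. The extra bookkeeping you describe (full twists from $1/b_i$ surgery on the encircling unknots) is exactly the correspondence the paper uses implicitly, so no further input is needed.
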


\begin{proof}
By \refthm{AugHyperbolic}, a parent link before filling is hyperbolic if the $a_i$ satisfy the hypotheses of the corollary. Under high Dehn filling, if the parent is hyperbolic, the Dehn filling remains hyperbolic by Thurston's hyperbolic Dehn filling theorem~\cite{thurston:notes}.
\end{proof}

\section{Toroidal T-links without twists}\label{Sec:NoTwistToroidal}

We have found infinitely many toroidal T-links obtained by adding full twists on strands of a torus link, and infintely many more T-links obtained by full twisting that cannot be toroidal. Our results so far give evidence for the Lorenz satellite conjecture, \refconj{LorenzSatellite}, but they only apply to T-links obtained by full twisting.

In this section we give a family of T-links that are not obtained by adding full or even half-twists to torus links in any obvious manner, but are still toroidal, and still satisfy \refconj{LorenzSatellite}.

\begin{theorem}\label{Thm:NoTwistToroidal}
  Choose positive integers as follows. Let $c\geq 3$, $r\geq 2$, $1\leq k\leq r-1$. Let $2\leq a_1 < \dots < a_n \leq r-k$, and let $b_1, \dots, b_n>0$. Then the T-link
  \[ T((a_1,b_1),\dots,(a_n,b_n),(rc-k, r-k), (rc, r(c-2)+k)) \]
  is a satellite link with companion the torus knot $T(c,c-1)$ and pattern the T-link
  \[ T((a_1,b_1), \dots, (a_n,b_n),(r-k, r-k),(r,r(c-1)^2 + r(c-2)+k)). \]
\end{theorem}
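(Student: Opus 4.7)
The plan is to construct the satellite link explicitly, in the spirit of the Birman--Williams style proof promised for \refcor{BWIntro}: rather than starting with the T-link and hunting for an essential torus, I would build the satellite from the companion $T(c,c-1)$ and the proposed pattern, and show that the result is exactly the stated T-link. The essential torus is then $T = \partial N$, where $N$ is a tubular neighbourhood of $T(c,c-1)$ parallel to the Heegaard torus $F$ on which $T(c,c-1)$ lies.

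The starting point is the braid identity
\[ (\sigma_1\cdots\sigma_{rc-1})^{r-k} \cdot (\sigma_1\cdots\sigma_{rc-1})^{r(c-2)+k} = (\sigma_1\cdots\sigma_{rc-1})^{r(c-1)}, \]
so that $T((rc,r-k),(rc,r(c-2)+k)) = T(rc,r(c-1))$. Since $\gcd(rc, r(c-1)) = r$, this torus link is precisely $r$ parallel copies of $T(c,c-1)$ on $F$, and thus lies inside $N$. The actual T-link has first factor $(rc-k, r-k)$ instead of $(rc, r-k)$, omitting the sub-word $\sigma_{rc-k}\cdots\sigma_{rc-1}$ from each of the $r-k$ passes and so removing $k(r-k)$ crossings. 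The geometric heart of the proof is to realise this modification by an isotopy that keeps the entire link inside $N$, by ``partially splitting'' the last $k$ parallel copies from the first $r-k$ during the first $r-k$ passes. The hypothesis $a_n \leq r-k$ is exactly what is needed so that the extra tangles $(a_i, b_i)$ also fit inside $N$ after this isotopy.

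To identify the pattern, I view $N$ as an unknotted solid torus $V$ with its $0$-framing, and compute the induced pattern of $K \subset V$. The parallel-to-core curve on $F$ corresponds to slope $(1, c(c-1))$ on $\partial V$, since the Heegaard framing of $T(c,c-1)$ exceeds the $0$-framing by $c(c-1)$. Consequently $T(rc, r(c-1)) \subset N$ pulls back to the pattern $T(r, rc(c-1))$ in $V$. Tracking how the modification $(rc, r-k) \to (rc-k, r-k)$ on $rc$ strands translates through this framing identification yields both an extra factor $(r-k, r-k)$ in the pattern and a shift of $-(r-k)$ in the exponent of the $T(r, \cdot)$ factor, producing the pattern
\[ T((a_1,b_1),\dots,(a_n,b_n),(r-k, r-k),(r, rc(c-1) - (r-k))). \]
The identity $rc(c-1) - (r-k) = r(c-1)^2 + r(c-2) + k$ recovers the stated formula, and a crossing count on both sides (each side losing exactly $k(r-k)$ crossings relative to the corresponding torus version) serves as a sanity check.

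Finally, essentiality of $T = \partial N$ follows since $T(c,c-1)$ is a nontrivial knot (so $T$ is incompressible and not boundary-parallel in $S^3 - N$), and the pattern has wrapping number $r \geq 2$ and is not isotopic into a ball in $V$ (so $T$ is incompressible and not boundary-parallel in $V - K$). The main obstacle is the second paragraph: rigorously implementing the partial-twist modification $(rc, r-k) \to (rc-k, r-k)$ as an explicit isotopy of $K$ inside $N$, and tracking exactly how it produces the extra factor $(r-k, r-k)$ and the exponent shift $-(r-k)$ in the pattern. The asymmetry between the modification on $rc$ strands in $K$ and the corresponding modification on $r$ strands in the pattern, mediated by the framing change between the Heegaard torus of $S^3$ and the $0$-framing of $T(c,c-1)$, is the main technical difficulty.
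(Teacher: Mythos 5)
Your overall strategy is the same as the paper's: realise the link explicitly inside a solid torus neighbourhood of $T(c,c-1)$, read off the pattern braid, and correct for the framing when the tube is straightened to an unknotted solid torus. Your framing bookkeeping is also consistent: you work with the Heegaard-surface framing $c(c-1)$ of $T(c,c-1)$ and the cable $T(rc,r(c-1))$, while the paper reads the pattern in the blackboard framing of the braid closure and adds $(c-1)^2$ full twists on $r$ strands; both normalisations produce the exponent $r(c-1)^2+r(c-2)+k = rc(c-1)-(r-k)$, as you checked.

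However, the step you flag as the ``main obstacle'' is not a technicality to be deferred --- it is the entire content of the proof, and as described it does not work as stated. You cannot realise the passage from $(rc,r-k)$ to $(rc-k,r-k)$ by an isotopy of the $r$ parallel copies $T(rc,r(c-1))$ inside $N$: the modified closed braid is a genuinely different link (for instance, pairwise linking numbers between the $r$ strands' closures change), so ``partially splitting the last $k$ copies'' must instead be an explicit embedding of the modified braid in $N$, constructed from scratch. Likewise, your extra factor $(r-k,r-k)$ and the shift $-(r-k)$ are read off from the target formula rather than derived from a construction. The paper supplies exactly this missing construction: conjugate the braid word to $(rc,r(c-2)+k)*\tau*(rc-k,r-k)$, thread it through the $c$ overstrand tubes of the braid $(c,c-1)$ with $r$ strands per tube, putting the first $r(c-2)$ overstrands of $(rc,r(c-2)+k)$ in the first $c-2$ passes; in the final pass, $k$ strands run all the way across to complete $(rc,r(c-2)+k)$, while the remaining $r-k$ strands enter $\tau$ (this is where $a_n\leq r-k$ is used) and then run across $c-1$ tubes to form $(rc-k,r-k)$. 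Reading the strands inside the tube then gives the pattern braid $(r,r(c-2)+k)*\tau*(r-k,r-k)$, and the framing change contributes $(r,r(c-1)^2)$; this is where the $(r-k,r-k)$ factor and the exponent actually come from. Until you carry out such an explicit placement (or an equivalent argument), the proposal identifies the right answer but does not prove it.
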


\begin{proof}
Let $\tau$ be the braid $(a_1,b_1)*\dots*(a_n,b_n)$. The T-link under consideration is the closure of the braid
\[  \tau*(rc-k,r-k)*(rc,r(c-2)+k), \]
which is equivalent to the closure of the braid
\begin{equation}\label{Eqn:BraidNoTwist}
 (rc,r(c-2)+k)*\tau*(rc-k,r-k),
\end{equation}
after conjugating by $\tau*(rc-k,r-k)$, or alternatively, moving the crossings corresponding to the braid $\tau*(rc-k,r-k)$ around the braid closure.

The theorem is proved by drawing the braid corresponding to the torus knot $T(c,c-1)$ as a thick tube, and then carefully placing the braid of equation~\eqref{Eqn:BraidNoTwist} inside the tube, as shown in \reffig{SatelliteNoTwists}.

\begin{figure}
  \centering
  \import{Figures/}{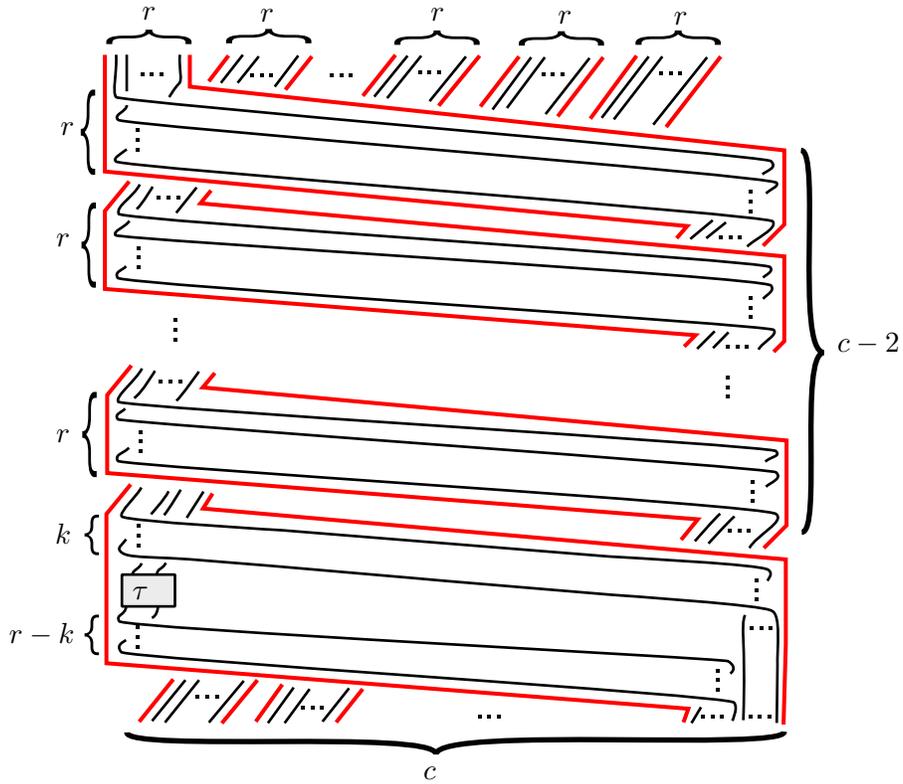}
  \caption{The closed braid of equation~\eqref{Eqn:BraidNoTwist} fits inside a tube shown in red.}
  \label{Fig:SatelliteNoTwists}
\end{figure}

Precisely, there are $c$ tubes across the top of the braided tube corresponding to the braid $(c,c-1)$. In each tube, place $r$ strands. For the first $c-2$ overstrands running left to right in the braid $(c,c-1)$, place the first $r(c-2)$ overstrands running left to right of the braid $(rc,r(c-2)+k)$.

The final tube running left to right is different. There are $r$ strands in the tube. The first $k$ of these run all the way left to all the way right, completing the braid $(rc,r(c-2)+k)$. Just below these overstrands are $r-k$ strands on the left of the braid. These run into the braid $\tau$, as shown in \reffig{SatelliteNoTwists}. From there, they form the $r-k$ strands running all the way across $c-1$ of the tubes, forming the braid $(r(c-1)+r-k, r-k)=(rc-k,r-k)$. This completes the braid of equation~\eqref{Eqn:BraidNoTwist}.

Take the closure of the braid, and the closure of the tubes, each containing $r$ strands of the braid. This gives the T-link of the theorem, contained in a closed solid torus. Hence the T-link is satellite. The companion is the torus knot $T(c,c-1)$, which is the torus knot obtained by taking the closure of the tubes.

The pattern is a closed braid determined as follows. Begin in the top left tube. There are $r$ strands running into the braid. As the tube runs left to right across the braid, those $r$ strands form a full twist, each crossing once over all the other strands. This happens $c-2$ times, picking up $(c-2)r$ strands crossing fully over $r$ strands of the braid within the tube. In the $(c-1)$-th tube, $k$ strands run over all the $r$ strands, completing the braid $(r,r(c-2)+k)$. The $r-k$ strands on the left then run into the braid $\tau$. Upon exiting, $r-k$ strands form a full twist, which is the braid $(r-k,r-k)$. Putting it together, this is the closure of the braid
\[ (r,r(c-2)+k)*\tau*(r-k,r-k).\]
However, this braid is lying inside a tube about the torus knot $T(c,c-1)$. To obtain the pattern, we apply a homeomorphism taking the tube to the unknotted solid torus, taking a standard longitude of $T(c,c-1)$ to a standard longitude of the unknot. By linking number considerations, the effect is to add an additional $(c-1)^2$ full twists to the braid that makes up the pattern, giving the pattern as the closure of the braid
\[ (r,r(c-2)+k)*\tau*(r-k,r-k)*(r,r(c-1)^2), \]
now within a tube about the unknot rather than the $T(c,c-1)$ torus knot.
After braid conjugation, this is equivalent to the T-link pattern claimed in the statement of the theorem.
\end{proof}

\begin{theorem}\label{Thm:BirmanWilliamsExtend}
Let $1<a_1<\dots<a_n$ and $b_1, \dots, b_n$ be positive integers such that $K_1=T((a_1,b_1),\dots,(a_n,b_n))$ is a knot. Let $1<c_1<\dots<c_m$ be integers, and $d_1, \dots, d_m$ be positive integers, and set $B=b_1+ \dots +b_n$, and set $D$ to be the number of crossings of $K_1$, or $D=\sum_{i=1}^n (a_i-1)b_i$. Then the satellite link with companion the T-link $K_1$ 
and pattern the T-link $K_2 = T((c_1, d_1),\dots,(c_m,d_m+c_mB+c_mD))$
is the T-link
\[ T= T((c_1,d_1), \dots, (c_{m-1},d_{m-1}), (c_m,d_m), (c_ma_1,c_mb_1), \dots, (c_ma_n,c_mb_n)).\]
\end{theorem}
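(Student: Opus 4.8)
The plan is to prove the statement by a direct geometric construction, in the spirit of the proof of \refthm{NoTwistToroidal} and of Birman--Williams~\cite[Theorem~6.2]{BirmanWilliams}: draw the companion $K_1$ as a closed braid, fatten it into a solid torus tube, thread the pattern braid through the tube so that the resulting closed braid is exactly $T$, and then identify the pattern by keeping track of framings.

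Write $K_1$ as the closure of the braid $\beta=(\sigma_1\cdots\sigma_{a_1-1})^{b_1}\cdots(\sigma_1\cdots\sigma_{a_n-1})^{b_n}$ on $a_n$ strands; since $K_1$ is a knot, $N(K_1)$ is a solid torus, which I draw as the fattening of the closed braid $\widehat\beta$, so that a braid cross-section meets it in $a_n$ parallel discs. Note $\beta$ has writhe $-D$ and $B$ ``syllables,'' where $D=\sum_i(a_i-1)b_i$ and $B=\sum_i b_i$. Running $c_m$ parallel strands through the tube turns each crossing of $\widehat\beta$ into a crossing of one block of $c_m$ strands over another, so the braid carried around the tube is the $c_m$-strand cable $\beta^{(c_m)}$ of $\beta$, a braid on $c_m a_n$ strands. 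Into one of the $c_m$-strand blocks, at a single place along the tube, I would insert the braid $\gamma=(\sigma_1\cdots\sigma_{c_1-1})^{d_1}\cdots(\sigma_1\cdots\sigma_{c_m-1})^{d_m}$ together with $B$ copies of a positive full twist $\Theta$ on those $c_m$ strands. Closing the tube recovers $K_1$, so the closed braid $\gamma\,\Theta^{B}\,\beta^{(c_m)}$ lies in the solid torus $N(K_1)$ and is therefore a satellite link with companion $K_1$.

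To recognise this closed braid as the T-link $T$, I would invoke the standard fact (a braid manipulation of Birman--Kofman type~\cite{BirmanKofman}) that, for a single syllable, $\Theta^{b}$ times the $c$-strand cable of $(\sigma_1\cdots\sigma_{a-1})^{b}$ is conjugate to $(\sigma_1\cdots\sigma_{ca-1})^{cb}$ — the braid of the link $T(ca,cb)$, which is $c$ parallel copies of $T(a,b)$ on the Heegaard torus; the extra $b$ full twists account for the difference $b$ between the surface framing $ab$ and the braid framing $(a-1)b$ of $T(a,b)$. Applying this to each syllable and collecting the $B$ full twists, the closed braid $\gamma\,\Theta^{B}\,\beta^{(c_m)}$ becomes the closure of
\[ (\sigma_1\cdots\sigma_{c_1-1})^{d_1}\cdots(\sigma_1\cdots\sigma_{c_m-1})^{d_m}\prod_{i=1}^{n}(\sigma_1\cdots\sigma_{c_m a_i-1})^{c_m b_i}, \]
which is exactly $T$. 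To read off the pattern, observe that what sits inside the tube is the closure of $\gamma\,\Theta^{B}$, i.e.\ $T((c_1,d_1),\dots,(c_m,d_m+c_mB))$, lying in $N(K_1)$ with the framing coming from the fattened closed braid; that is the blackboard framing, whose linking number with $K_1$ is $-D$. Passing to the standard unknotted solid torus — regluing so that its preferred longitude maps to the $0$-framed longitude of $K_1$ — inserts $D$ further positive full twists on the $c_m$ strands, so the pattern, viewed in the standard solid torus, is $T((c_1,d_1),\dots,(c_m,d_m+c_mB+c_mD))=K_2$.

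The construction mimics \refthm{NoTwistToroidal}, so the real work — and the main obstacle — is the bookkeeping. Two points need care: first, verifying the cabling identity above, and in particular that the full twists contributed by the separate syllables can be slid past the cabled generators and collected on one $c_m$-strand block, so that $\Theta^{B}\,\beta^{(c_m)}$ re-assembles into the positive syllables $(c_m a_i,c_m b_i)$ with no spurious negative crossings; and second, pinning down the sign and magnitude of the framing correction so that the excess of the pattern over the ``core'' $T((c_1,d_1),\dots,(c_m,d_m))$ comes out to $c_m(B+D)$, and not, say, $c_m(B-D)$. Drawing the explicit picture, as in \reffig{SatelliteNoTwists}, in a small case such as $K_1=T((2,3))$ is the surest way to fix these conventions.
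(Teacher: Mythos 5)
Your proposal is essentially the paper's own proof: the paper likewise threads the $c_m$ strands carrying the braid $(c_1,d_1)*\cdots*(c_m,d_m)$ through a tube about the closed braid of $K_1$ and inserts one full twist on the $c_m$ strands per overstrand of $K_1$ ($B$ twists in all, split half on the far left and half on the far right of each overstrand), which is exactly your cabling identity $\Theta^{b}\cdot\beta^{(c_m)}\sim(c_ma,c_mb)$ realized pictorially, and it then computes the pattern by the same blackboard-to-$0$-framing correction of $D$ additional full twists. The bookkeeping you flag (collecting/distributing the twists along the tube and fixing the sign of the framing correction) is precisely what the paper settles by its figure, and your conclusions agree with it.
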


\begin{proof}
Consider the braid of $K_1= T((a_1,b_1),\dots,(a_n,b_n))$. Take a tube about this braid, similar to the red tube in \reffig{SatelliteNoTwists}. In the top left corner within the tube, add the braid corresponding to the T-link $T((c_1,d_1), \dots, (c_m,d_m))$. This braid has $c_m$ strands running into the top, and $c_m$ strands running out the bottom. Let the strands running out the bottom follow the tube about the knot $K_1$. To ensure this has the presentation of a T-link, for each overstrand of $K_1$ we insert one full twist, with half of the crossings on the far left, and half on the far right, similar to what is shown for each of the first $c-2$ overstrands \reffig{SatelliteNoTwists}. Elsewhere, let the $c_m$ strands run through the tube without crossing. This gives the T-link of the theorem.

The companion is $K_1$ by construction. The pattern is obtained by taking a homeomorphism of the tube about $K_1$ to the tube about the unknot, sending a standard longitude to a standard longitude. As in the previous theorem, this adds $D$ additional full twists, giving the pattern as a closed braid, with the braid equal to
\[ (c_1,d_1)* \cdots * (c_m,d_m)* (c_m,c_mB)* (c_m,c_mD). \]
This is the claimed pattern.
\end{proof}

\begin{corollary}\label{Cor:BWIntro}
  For any two one-component T-links $K_1$ and $K_2$, there exists a satellite T-link $K$ such that after cutting $S^3-K$ along an essential torus, the components consist of the complement of $K_1$ in $S^3$, and the complement of $K_2$ in a solid torus.
\end{corollary}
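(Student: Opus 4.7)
My plan is to deduce the corollary directly from \refthm{BirmanWilliamsExtend} by first adjusting the T-link presentation of $K_2$ to fit the hypotheses of that theorem. Write the given one-component T-links as $K_1 = T((a_1,b_1),\dots,(a_n,b_n))$ and $K_2 = T((c_1,e_1),\dots,(c_m,e_m))$, and set $B=b_1+\dots+b_n$ and $D=\sum_i(a_i-1)b_i$ as in \refthm{BirmanWilliamsExtend}. That theorem produces a satellite T-link with companion $K_1$ and pattern $T((c_1,d_1),\dots,(c_m,d_m + c_m(B+D)))$ for any positive integers $d_i$; to realise $K_2$ as the pattern I would like to take $d_i=e_i$ for $i<m$ and $d_m = e_m - c_m(B+D)$, but this last quantity need not be positive.

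To fix this, I would first replace $K_2$ by $K_2^{(t)} = T((c_1,e_1),\dots,(c_m, e_m + tc_m))$, obtained by appending $t$ full twists on the $c_m$ strands of the braid closing to $K_2$. Choose $t$ large enough that $d_m := e_m + tc_m - c_m(B+D) > 0$, and then apply \refthm{BirmanWilliamsExtend} with the parameters $(c_1,e_1),\dots,(c_{m-1},e_{m-1}),(c_m,d_m)$ in the role of the data for the pattern. The resulting T-link $K$ is a satellite whose companion is $K_1$ and whose pattern, viewed as a link inside the solid torus complement of the braid axis, is $K_2^{(t)}$. The torus along which to cut is the boundary of the tube about $K_1$ used in the construction of \refthm{BirmanWilliamsExtend}.

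The key observation is that $K_2^{(t)}$ and $K_2$ represent the same link in a solid torus, up to a self-homeomorphism of $S^1\times D^2$: appending $t$ full twists on all $c_m$ strands is conjugation by the $t$-th power of a Dehn twist about a meridianal disc, which is a homeomorphism of the solid torus. Consequently the solid-torus exterior of $K_2^{(t)}$ is homeomorphic to the solid-torus exterior of $K_2$, and cutting $S^3 - K$ along the tube boundary yields the exterior of $K_1$ in $S^3$ on one side and the exterior of $K_2$ in a solid torus on the other, as required.

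It remains only to check that the cutting torus is in fact essential. For this, one assumes $K_1$ is a nontrivial knot (otherwise the corollary is vacuous as a statement about essential tori), so the tube about $K_1$ is knotted in $S^3$; and the pattern $K_2^{(t)}$, being a closed braid on $c_m\geq 2$ strands, has wrapping number at least $c_m$ in the solid torus and therefore is not isotopic into a ball. The main work has already been done in \refthm{BirmanWilliamsExtend}; the only new ingredient is the full-twist absorption step that removes the positivity constraint on $d_m$, so I do not anticipate a serious obstacle.
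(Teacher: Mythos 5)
Your proposal is correct and follows essentially the same route as the paper: apply \refthm{BirmanWilliamsExtend} and then observe that the extra full twists on the pattern are realised by a self-homeomorphism of the solid torus, so the solid-torus exterior of the pattern is that of $K_2$. The only difference is cosmetic: the paper simply feeds $K_2$'s own parameters into the theorem and absorbs the resulting $B+D$ full twists afterwards, so your auxiliary twisting by $t$ to restore positivity of $d_m$ is an unnecessary (though harmless) detour.
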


\begin{proof}
For given T-links $K_1$ and $K_2$, \refthm{BirmanWilliamsExtend} constructs a T-link with companion $K_1$ and pattern obtained from $K_2$ by adding $B+D$ additional full twists. Because the homeomorphism type of a link in a solid torus is unaffected by the additional twisting, we obtain the result. 
\end{proof}

\subsection*{Acknowledgements}
We thank Norman Do for his questions leading to Question~\ref{Ques:NormQuestion}. We thank Ilya Kofman and Sangyop Lee for helpful discussions. 
The authors were supported in part by grants from the Australian Research Council.

\bibliographystyle{amsplain}  

\bibliography{references}

\end{document}